\newtheorem{lemma}{Lemma}
\newtheorem{definition}[lemma]{Definition}
\newtheorem{proposition}[lemma]{Proposition}
\newtheorem{theorem}[lemma]{Theorem}
\newtheorem{remark}[lemma]{Remark}
\newtheorem{corollary}[lemma]{Corollary}
\newtheorem{example}[lemma]{Example}
\begin{document}
\title{String and band complexes over string almost gentle algebras}

 \author{Andr\'es Franco*, Hern\'an Giraldo*, and Pedro Rizzo*.}

\date{\today}

\begin{abstract}
We give a combinatorial description of a family of indecomposable objects in the  bounded  derived categories of a new class of algebras: string almost gentle algebras. These indecomposable objects are, up to isomorphism, the string and band complexes introduced by V. Bekkert and H. Merklen in \cite{Be-Me}. With this description, we give a necessary and sufficient condition for a given string complex to have infinite minimal projective resolution and we extend this condition for the  case of string algebras. Using this characterization we establish a sufficient condition for a  string almost gentle algebra (or a string algebra) to have infinite global dimension.\\
\end{abstract}
\maketitle

Keywords: Gentle algebras, String almost gentle algebras, Derived Categories.

\vskip5pt
\noindent
 
\vskip5pt
\noindent
 * Instituto de Matem\'aticas, Universidad de Antioquia, Medell\'in-Colombia

 e-mails: andres.francol@udea.edu.co, pedro.hernandez@udea.edu.co, hernan.giraldo@udea.edu.co.

\section{Introduction}

The derived categories of the module category of finite-dimensional algebras were introduced in Representation Theory by D. Happel in \cite{Ha} in 1987 and they have been very useful for the development of the theory. Despite their importance, just for a few classes of algebras, the structure of their derived categories, in particular the description of the indecomposable objects, has been known. 

The theory of almost split sequences was extended with
success to the study of the derived category of an algebra of finite global dimension, where the notion of almost split sequence gave origin to the notion of almost split triangles. In \cite{Ha}, D. Happel proved that the derived category of an algebra of finite global dimension has almost split triangles and he gave a complete description in the case of hereditary algebras.

In general, it is not easy to describe the indecomposable objects in the derived category of an algebra. In \cite{Be-Me}, the authors solved this problem when the algebra is gentle. Recent work in this direction has been done by H. Giraldo and J. A.  V\'elez-Marulanda in \cite{Gi-Ve}, where they gave a description of a class of indecomposable object for the case of certain algebra of dihedral type. Also, G. Bobi\'nski in \cite{Bob}, and K.K. Arnesen and
Y. Grimeland in \cite{Ar-Gri} studied the structure of the derived categories of  finite-dimensional algebras. The first paper describes the almost split triangles in $K^b(\text{pro} \ A)$ for a gentle algebra $A$. In the second paper the authors classified the Auslander-Reiten components of
$K^b(\text{pro} \ A)$, where $A$ is a cluster tilted algebra of type $\widetilde{A}$.\\

On the other hand, P. Bergh, Y. Han and D. Madsen in  \cite{Be-Han-Ma} (see also \cite{Ha-Za}) gave a sufficient condition for a monomial algebra to have infinite global dimension. In this paper we will give an analogous sufficient condition for  the cases of both string almost gentle algebras and string algebras using different combinatorial techniques.\\

This work is organized as follows. In Section \ref{Sec-Preliminaries}, we fix some notations and present some background material about well-known results in derived categories and  Bondarenko's category. 

Then, in Section \ref{Sec:SAGalgebras}, we begin by recalling the definition of a special biserial algebra, of a string algebra and the definition of an almost gentle algebra given by E. Green and S. Schroll in \cite{Gr-Sc} in order to study a new class of algebras: the string almost gentle algebras. We state some preliminary results about these algebras and then we adapt the construction of a functor made in \cite{Be-Me} from the category $\mathfrak{p}(A)$ of perfect complexes to the Bondarenko's category. This functor will allow us to prove that string and band complexes (in the sense of V. Bekkert and H. Merklen, \cite{Be-Me}) are indecomposable objects in the bounded derived category of a string almost gentle algebra. Also, we show with an example that, in the case of a string almost gentle algebra, there are indecomposable complexes that do not correspond to a string or a band complex. 

In the last subsection of Section \ref{Sec:SAGalgebras}, we recall the definition of the global dimension of an algebra and we give, as a consequence of the previous work, a sufficient condition for a string almost gentle algebra to have infinite global dimension. This condition is analogous to that given in \cite{Be-Han-Ma} (see also \cite{Ha-Za}), but in this work we use different combinatorial techniques.

In Section \ref{Sec:main-theorem-SAG}, we state and prove the Main Theorem for the string almost gentle case. That is, we show that, in this case,  string and band complexes are indecomposable objects in the bounded derived category and we extend this result for some classes of string algebras.

Finally, in Section \ref{Sec:periodic-complexes-for-string-case}, we develop similar combinatorial techniques for the case of string algebras in order to give a characterization of string complexes with infinite minimal projective resolution. These complexes will be called \textit{periodic string complexes}. As a consequence of this characterization, we complete this work by stating a sufficient condition for a string algebra to have infinite global dimension.

\section{Preliminaries}\label{Sec-Preliminaries}

The definitions and results in this section are standard. For the convenience of the reader we recall here the introductory material from \cite{Be-Me}.

Let $A$ be a finite-dimensional algebra of the form $kQ/I=k(Q, I)$ over an algebraically closed field $k$, where $I$ is an admissible ideal of $kQ$  and $Q$ is a finite quiver. As usual, we denote by $Q_0$ (resp. by $Q_1$) the set of vertices (resp. the set of arrows) of $Q$. Let $A-\text{mod}$ be the category of finitely generated left $A-$modules.

We denote by $e_i$ the trivial path at vertex $i\in Q_0$ and by $P_i=Ae_i$ the corresponding indecomposable projective $A$-module.  We will denote by $\textbf{Pa}$ the set of all paths of $k(Q,I)$, that is, the set of all paths of $Q$ that are outside $I$ and by $\textbf{Pa}_{\geq l}$ (resp. by $\textbf{Pa}_{> l}$) the set of all paths in $\textbf{Pa}$ of length greater than or equal to a fixed non-negative integer $l$ (resp. greater than $l$).

Every element of $A=k(Q,I)$ can be represented uniquely by a linear combination of elements in $\textbf{Pa}$, and hence we can assume that $\textbf{Pa}$ is a basis for $A$.

The set $\textbf{M}$ of \textit{maximal paths} in $\textbf{Pa}$ has a very important role throughout this theory. A path $w=w_1\cdots w_n$ in  $k(Q,I)$ is \textit{maximal} in $k(Q,I)$ if for all arrows $a,b \in Q_1$, we have that  $aw$ and $wb$ are not paths in $k(Q,I)$. Furthermore, a nontrivial path $w$ of $Q$ is in $\textbf{Pa}$ if and only if it is a sub-path of a maximal path  $\widetilde{w}$ that is not in $I$ (that is, of an element of $\textbf{M}$). This maximal path has the form $\widetilde{w}=\hat{w}w\bar{w}$ with $\hat{w}, \bar{w}\in \textbf{Pa}$. For future reference, the path $\hat{w}\in\textbf{Pa}$ is called the \textit{left completion of $w$}. \\

We denote by $D(A)$ (resp. $D^-(A)$ or $D^b(A)$) the derived category of $A-\text{mod}$ (resp. the derived category of right bounded complexes of $A-\text{mod}$ or the derived category of bounded complexes of $A-\text{mod}$); by $C^b(\text{pro} \ A)$ (resp. $C^-(\text{pro} \ A)$ or $C^{-,b}(\text{pro} \ A)$) the category of bounded  complexes of projectives (resp. of right bounded  complexes of projectives or right bounded complexes of projectives with bounded cohomology).\\

Also, we denote by $K^b(\text{pro} \ A)$ (resp. $K^-(\text{pro} \ A)$ or $K^{-,b}(\text{pro} \ A)$) the corresponding homotopy categories to $C^b(\text{pro} \ A)$ (resp. $C^-(\text{pro} \ A)$ or $C^{-,b}(\text{pro} \ A)$).

By $\mathfrak{p}(A)$ we denote the full subcategory of $C^b(\text{pro} \ A)$ defined by the  complexes of projectives such that the image of every differential map is contained in the radical of the corresponding projective module.

Any projective complex is the sum of two complexes: one complex in $\mathfrak{p}(A)$ and another one isomorphic to the zero object in the derived category (because all differential maps are 0's or isomorphisms). Hence, we can assume that all the complexes we deal with are in $\mathfrak{p}(A)$.

It is well known that $D^b(A)$ is equivalent to $K^{-,b}(\text{pro} \ A)$. We state this in the following theorem (for the proof, see \cite{KoZi}, Prop 6.3.1, p. 113).

\begin{theorem}\label{Th:Konig-Zimmermann}
	$D^-(A)$ is equivalent to $K^{-}(\text{pro} \ A)$. The image of $D^b(A)$ under this equivalence is the homotopy category $K^{-,b}(\text{pro} \ A)$ of right bounded complexes of projectives with bounded cohomology.
\end{theorem}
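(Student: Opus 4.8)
The statement is classical; the plan is to exhibit the equivalence explicitly and verify its defining properties, then restrict it. I would define a functor $F\colon K^{-}(\text{pro}\ A)\to D^-(A)$ as the composite of the inclusion $K^{-}(\text{pro}\ A)\hookrightarrow K^{-}(A)$ with the localization functor $K^{-}(A)\to D^-(A)$, and reduce the first assertion to showing that $F$ is fully faithful and essentially surjective. For full faithfulness the crucial ingredient is the homotopy lemma: if $P^{\bullet}$ is a right bounded complex of projectives and $Y^{\bullet}$ is an acyclic complex, then every chain map $f\colon P^{\bullet}\to Y^{\bullet}$ is null-homotopic. I would prove this by constructing a contracting homotopy $s^{n}\colon P^{n}\to Y^{n-1}$ by descending induction on $n$, starting in degrees where $P^{\bullet}$ vanishes: given $s^{n+1}$, the map $g^{n}:=f^{n}-s^{n+1}d_{P}^{n}$ satisfies $d_{Y}^{n}g^{n}=0$ (using the inductive hypothesis in degree $n+1$), hence lands in $\ker d_{Y}^{n}=\operatorname{im} d_{Y}^{n-1}$ by acyclicity, and projectivity of $P^{n}$ lifts it through $d_{Y}^{n-1}$ to produce $s^{n}$. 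Applying this to the cone of a quasi-isomorphism shows that any quasi-isomorphism $Y^{\bullet}\to Z^{\bullet}$ induces a bijection $\operatorname{Hom}_{K^{-}(A)}(P^{\bullet},Y^{\bullet})\xrightarrow{\ \sim\ }\operatorname{Hom}_{K^{-}(A)}(P^{\bullet},Z^{\bullet})$, and therefore, via the calculus of fractions describing $D^-(A)$, that $\operatorname{Hom}_{K^{-}(A)}(P^{\bullet},X^{\bullet})\to\operatorname{Hom}_{D^-(A)}(P^{\bullet},X^{\bullet})$ is bijective for all $X^{\bullet}$; specializing $X^{\bullet}=P'^{\bullet}\in K^{-}(\text{pro}\ A)$ yields full faithfulness of $F$.

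For essential surjectivity I would show that every object $X^{\bullet}$ of $D^-(A)$ admits a quasi-isomorphism $P^{\bullet}\to X^{\bullet}$ with $P^{\bullet}\in C^{-}(\text{pro}\ A)$. Since $A$ is finite-dimensional, hence left Noetherian, and the objects of $A-\text{mod}$ are finitely generated, every module has a resolution by finitely generated projectives; assembling these into a Cartan--Eilenberg resolution of $X^{\bullet}$ and passing to the associated total complex produces such a $P^{\bullet}$, right boundedness of $X^{\bullet}$ ensuring right boundedness of the total complex and a genuine quasi-isomorphism onto $X^{\bullet}$. Together with the previous paragraph this establishes that $F$ is an equivalence $K^{-}(\text{pro}\ A)\xrightarrow{\ \sim\ }D^-(A)$.

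For the statement about $D^{b}(A)$: recall that, $A-\text{mod}$ being abelian, $D^{b}(A)$ is equivalent, via the canonical functor $D^{b}(A)\to D^-(A)$, to the full subcategory of $D^-(A)$ consisting of complexes with bounded total cohomology. Under $F$ a complex $P^{\bullet}\in K^{-}(\text{pro}\ A)$ is sent to an object with bounded cohomology if and only if $P^{\bullet}$ itself has bounded cohomology, i.e. $P^{\bullet}\in K^{-,b}(\text{pro}\ A)$; hence $F$ restricts to the asserted equivalence $D^{b}(A)\simeq K^{-,b}(\text{pro}\ A)$. I expect the main obstacle to be the bookkeeping in the essential surjectivity step: constructing the Cartan--Eilenberg resolution in a sufficiently functorial way, controlling the degrees appearing in its total complex, and checking that right boundedness and finite generation are preserved; once that is in place, full faithfulness and the final cohomological identification are routine.
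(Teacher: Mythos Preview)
The paper does not supply its own proof of this theorem: it simply records the statement and refers the reader to K\"onig--Zimmermann, Prop.~6.3.1. There is therefore no in-paper argument to compare against; your sketch is the standard textbook proof (essentially what one finds in the cited reference or in Weibel), and it is correct as outlined. One small remark: for essential surjectivity over a right-bounded complex you do not need the full Cartan--Eilenberg machinery---a direct inductive construction of a projective resolution of $X^{\bullet}$ degree by degree (taking at each step a projective cover of the appropriate pullback) is both simpler and makes the right-boundedness and finite generation transparent---but the Cartan--Eilenberg route you describe also works.
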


An important consequence from  Theorem \ref{Th:Konig-Zimmermann} is that it allows a useful classification of the indecomposable objects in $D^b(A)$ (see Corollary \ref{Cor:corollary1}). Before establishing this result, we need to introduce some notations and lemmas. More details in \cite{KoZi}, \cite{Be-Me} and \cite{Wb}.

Given a complex $M^{\bullet}\in D(A)$, we denote by $P^{\bullet}_{M^{\bullet}}$ the so called \textit{projective resolution} of $M^{\bullet}$. Let $H^i(M^{\bullet})$ denote the $i$th cohomology group of $M^{\bullet}$. 

\begin{definition}
	\begin{enumerate}
		\item Let $P^{\bullet}\in C^{-,b}(\text{pro} \ A)\setminus C^b(\text{pro} \ A)$ and let $s$ be the maximal number such that $H^i(P^{\bullet})=0$ for $i\leq s$ and $P^s\neq 0$. Then $\alpha(P^{\bullet})^{\bullet}$, \emph{the brutal truncation of $P^{\bullet}$ below $s$}, is the complex given by
		
		$$\alpha(P^{\bullet})^i=\left\{ \begin{array}{lcc}
		P^i,       &   \text{if}  & i\geq s, \\
		0,         &                 & \text{otherwise.}  
		\end{array}
		\right.$$
		$$\partial_{\alpha(P^{\bullet})^{\bullet}}^i=\left\{ \begin{array}{lcc}
		\partial_{P^{\bullet}}^i,       &   \text{if}  & i\geq s, \\
		0,         &                 & \text{otherwise.}  
		\end{array}
		\right.$$
		
		\item Let $P^{\bullet}\in C^{b}(\text{pro} \ A)$, $P^{\bullet}\neq 0^{\bullet}$ and let $t$ be the maximal number such that $P^i=0$ for $i<t$. Then $\beta(P^{\bullet})^{\bullet}$, \emph{the good truncation of $P^{\bullet}$ below $t$}, is the complex given by
		
		$$\beta(P^{\bullet})^i=\left\{ \begin{array}{lcc}
		P^i,       &   \text{if}  & i\geq t, \\
		\ker \partial_{P^{\bullet}}^t, & \text{if} & i=t-1,\\
		0,         &                 & \text{otherwise.}  
		\end{array}
		\right.$$
		$$\partial_{\beta(P^{\bullet})^{\bullet}}^i=\left\{ \begin{array}{lcc}
		\partial_{P^{\bullet}}^i,       &   \text{if}  & i\geq t, \\
		\iota_{\ker \partial_{P^{\bullet}}^t} & \text{if} & i=t-1 \\
		0,         &                 & \text{otherwise,}  
		\end{array}
		\right.$$ where $\iota_{\ker \partial_{P^{\bullet}}^t}$ is the inclusion map.
	\end{enumerate}
\end{definition}

There are some well known facts which will be useful in what follows (see \cite{Be-Me}).

\begin{lemma}\label{Lem:lemma1}
	Let $M^{\bullet}\in K^{-,b}(\text{pro} \ A)\setminus K^b(\text{pro} \ A)$ be an indecomposable. Then $\beta(\alpha(M^{\bullet})^\bullet)^\bullet$ is also an indecomposable in $D^b(A)$ and $M^\bullet\cong P_{\beta(\alpha(M^\bullet)^\bullet)^\bullet}^\bullet$.
\end{lemma}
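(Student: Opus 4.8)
The plan is to proceed in three steps, combining the equivalence of Theorem \ref{Th:Konig-Zimmermann} with the elementary homological behaviour of the brutal and good truncations. First I would record the effect of $\alpha$ on cohomology. Since $M^{\bullet}\in K^{-,b}(\text{pro}\ A)\setminus K^b(\text{pro}\ A)$, it has bounded cohomology, so there is a maximal $s$ with $H^i(M^{\bullet})=0$ for $i\le s$ and $M^s\ne 0$; the brutal truncation $\alpha(M^{\bullet})^{\bullet}$ agrees with $M^{\bullet}$ in all degrees $\ge s$ and is zero below, so it lies in $C^b(\text{pro}\ A)$ and one checks directly that $H^i(\alpha(M^{\bullet})^{\bullet})=H^i(M^{\bullet})$ for $i>s$, while $H^s(\alpha(M^{\bullet})^{\bullet})=\operatorname{coker}\partial^{s-1}_{M^{\bullet}}\big/\,0=M^s/\operatorname{im}\partial^{s-1}$; because $H^s(M^{\bullet})=0$ we have $\operatorname{im}\partial^{s-1}=\ker\partial^s$, so this is $M^s/\ker\partial^s\cong\operatorname{im}\partial^s\subseteq M^{s+1}$. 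Then applying the good truncation $\beta$ at the bottom degree $t$ of $\alpha(M^{\bullet})^{\bullet}$ (which is $s$, possibly after discarding zero terms) replaces $\alpha(M^{\bullet})^s$ by $\ker\partial^s$ in degree $s-1$, and a short diagram chase shows $H^i(\beta(\alpha(M^{\bullet})^{\bullet})^{\bullet})=H^i(\alpha(M^{\bullet})^{\bullet})=H^i(M^{\bullet})$ for all $i$; in particular $\beta(\alpha(M^{\bullet})^{\bullet})^{\bullet}$ is quasi-isomorphic to $\tau_{\ge s+1}M^{\bullet}$ and hence has bounded cohomology concentrated in the same range as that of $M^{\bullet}$.

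Second, I would identify $\beta(\alpha(M^{\bullet})^{\bullet})^{\bullet}$ with the object $M^{\bullet}$ itself inside $D^b(A)$, at least up to the usual projective resolution. The key point is that passing from $M^{\bullet}$ to $\alpha(M^{\bullet})^{\bullet}$ only throws away the part in degrees $\le s$, which carries no cohomology; one must argue that in $D^-(A)$ (equivalently in $K^-(\text{pro}\ A)$, by Theorem \ref{Th:Konig-Zimmermann}) the inclusion-type morphism $\alpha(M^{\bullet})^{\bullet}\to M^{\bullet}$ or its natural comparison map becomes an isomorphism on the relevant truncation, so that $P^{\bullet}_{\beta(\alpha(M^{\bullet})^{\bullet})^{\bullet}}$, the projective resolution in $K^-(\text{pro}\ A)$ of the object $\beta(\alpha(M^{\bullet})^{\bullet})^{\bullet}\in D^b(A)$, is homotopy equivalent to $M^{\bullet}$. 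Here I would invoke that any two projective resolutions (in $K^{-,b}(\text{pro}\ A)$) of quasi-isomorphic complexes are homotopy equivalent, together with the observation that $M^{\bullet}$ is itself a right bounded complex of projectives representing the derived object it resolves. This gives the stated isomorphism $M^{\bullet}\cong P^{\bullet}_{\beta(\alpha(M^{\bullet})^{\bullet})^{\bullet}}$.

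Third, I would deduce indecomposability. Both the assignment $N^{\bullet}\mapsto P^{\bullet}_{N^{\bullet}}$ (the equivalence $D^b(A)\xrightarrow{\sim}K^{-,b}(\text{pro}\ A)$ of Theorem \ref{Th:Konig-Zimmermann}) and its quasi-inverse preserve direct sum decompositions, and $\beta$, $\alpha$ restricted to the complexes in question are induced by additive functors on the appropriate truncated derived/homotopy categories. Since $M^{\bullet}$ is indecomposable and $M^{\bullet}\cong P^{\bullet}_{\beta(\alpha(M^{\bullet})^{\bullet})^{\bullet}}$, any nontrivial decomposition of $\beta(\alpha(M^{\bullet})^{\bullet})^{\bullet}$ in $D^b(A)$ would transport to a nontrivial decomposition of $M^{\bullet}$, a contradiction; hence $\beta(\alpha(M^{\bullet})^{\bullet})^{\bullet}$ is indecomposable in $D^b(A)$.

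The main obstacle is the second step: one has to be careful that $\beta(\alpha(M^{\bullet})^{\bullet})^{\bullet}$, although built by naive truncations at the chain level, really does represent the same object of $D^b(A)$ whose projective resolution recovers $M^{\bullet}$ — i.e. that the good truncation $\beta$ (which keeps a kernel, not a projective, in degree $t-1$) does not alter the quasi-isomorphism type, and that the brutal truncation $\alpha$ discards exactly an acyclic tail and nothing with cohomology. Once the cohomology bookkeeping of the first step is in place this is essentially the standard fact that truncation functors are exact and compatible with the equivalence of Theorem \ref{Th:Konig-Zimmermann}, but it is the point that needs the most care; the indecomposability in the third step is then a formal consequence.
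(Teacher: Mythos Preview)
The paper does not actually prove this lemma: it is stated among ``some well known facts'' with a reference to \cite{Be-Me}, so there is no argument in the paper to compare against. Your overall strategy is the standard one and is sound in outline: show that $M^\bullet$ and $\beta(\alpha(M^\bullet)^\bullet)^\bullet$ are quasi-isomorphic, conclude that the right-bounded complex of projectives $M^\bullet$ is itself a projective resolution of the latter, and then transport indecomposability through the equivalence of Theorem~\ref{Th:Konig-Zimmermann}.

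There is, however, a computational slip in your first step. In the brutal truncation one has $\alpha(M^\bullet)^{s-1}=0$, so the incoming differential at degree $s$ is zero and
\[
H^s\bigl(\alpha(M^\bullet)^\bullet\bigr)=\ker\partial^s,
\]
not $M^s/\operatorname{im}\partial^{s-1}\cong\operatorname{im}\partial^s$ as you write. In particular your asserted equality $H^i(\alpha(M^\bullet)^\bullet)=H^i(M^\bullet)$ for all $i$ is generally false at $i=s$. This does not break the argument, because the good truncation $\beta$ adjoins precisely $\ker\partial^s$ in degree $s-1$ via the inclusion, which kills this extra cohomology; so the conclusion $H^i(\beta(\alpha(M^\bullet)^\bullet)^\bullet)=H^i(M^\bullet)$ for all $i$ is still correct. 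With that correction, your second step can be made precise: the map $M^\bullet\to\beta(\alpha(M^\bullet)^\bullet)^\bullet$ that is the identity in degrees $\ge s$, is $\partial^{s-1}\colon M^{s-1}\twoheadrightarrow\ker\partial^s$ in degree $s-1$ (surjective because $H^s(M^\bullet)=0$), and is zero below, is a chain map and a quasi-isomorphism; since $M^\bullet$ is already a right-bounded complex of projectives, it is the required projective resolution. Step three is then formal, as you say.
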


If $\mathcal{C}$ is a Krull-Schmidt category, let us denote by $\text{ind}_0 \ \mathcal{C}$ the set of objects of the spectroid $\text{ind} \ \mathcal{C}$ (see \cite{GaRo}). Then we have the following.

\begin{lemma}\label{Lem:lemma2}
	There exist spectroids $\text{ind} \ \mathfrak{p}(A)$ and $\text{ind} \ K^b(\text{pro} \ A)$ of $\mathfrak{p}(A)$ and $K^b(\text{pro} \ A)$, respectively, such that $$\text{ind}_0 \ \mathfrak{p}(A)= \text{ind}_0 \ K^b(\text{pro} \ A).$$
\end{lemma}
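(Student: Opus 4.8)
The plan is to exploit the fact that $\mathfrak{p}(A)$ is a full subcategory of $C^b(\text{pro}\ A)$ and that the projection $C^b(\text{pro}\ A)\to K^b(\text{pro}\ A)$, when restricted to $\mathfrak{p}(A)$, is essentially surjective. Indeed, as observed in the paragraph preceding Theorem \ref{Th:Konig-Zimmermann}, every bounded projective complex decomposes as the direct sum of a complex lying in $\mathfrak{p}(A)$ and a complex that is null-homotopic (a direct sum of shifted "trivial" complexes $\cdots\to 0\to P\xrightarrow{\ \mathrm{id}\ }P\to 0\to\cdots$). Hence the composite $\mathfrak{p}(A)\hookrightarrow C^b(\text{pro}\ A)\to K^b(\text{pro}\ A)$ is a dense functor. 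First I would make this functor the backbone of the argument: it is full (morphisms of complexes pass to homotopy classes) and dense, so it induces a bijection on isomorphism classes of objects once we know it is also "faithful enough" to detect isomorphism, i.e. that two complexes in $\mathfrak{p}(A)$ which become isomorphic in $K^b(\text{pro}\ A)$ are already isomorphic in $\mathfrak{p}(A)$.

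The key step is therefore the following: \emph{a homotopy equivalence between two complexes in $\mathfrak{p}(A)$ is already an isomorphism of complexes} (up to adjusting by automorphisms). I would prove this by the standard minimality argument. If $f\colon P^\bullet\to Q^\bullet$ is a homotopy equivalence with homotopy inverse $g$, then $gf$ is homotopic to $\mathrm{id}_{P^\bullet}$, so $gf=\mathrm{id}+\partial h + h\partial$ for some homotopy $h$. Because all differentials of $P^\bullet$ have image in the radical, the "error term" $\partial h+h\partial$ is a radical endomorphism of each $P^i$; hence $gf$ is an endomorphism of $P^\bullet$ that is the identity modulo $\mathrm{rad}$, so it is an isomorphism of complexes. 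Symmetrically $fg$ is an isomorphism, and it follows formally that $f$ itself is an isomorphism of complexes. Consequently the dense full functor above reflects isomorphisms, so it induces a bijection between isomorphism classes of objects of $\mathfrak{p}(A)$ and those of $K^b(\text{pro}\ A)$; since both categories are Krull--Schmidt, this restricts to a bijection between the indecomposables.

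With this bijection in hand I would then invoke the general theory of spectroids (Gabriel--Roiter \cite{GaRo}): a Krull--Schmidt category $\mathcal{C}$ admits a spectroid $\mathrm{ind}\ \mathcal{C}$, unique up to isomorphism, whose objects $\mathrm{ind}_0\ \mathcal{C}$ form a complete irredundant set of representatives of the indecomposables of $\mathcal{C}$. Choosing $\mathrm{ind}\ \mathfrak{p}(A)$ to consist of one complex from each isomorphism class of indecomposables in $\mathfrak{p}(A)$, and transporting this choice along the bijection to obtain $\mathrm{ind}\ K^b(\text{pro}\ A)$, we get literally $\mathrm{ind}_0\ \mathfrak{p}(A)=\mathrm{ind}_0\ K^b(\text{pro}\ A)$ as required; each indecomposable complex in $\mathfrak{p}(A)$ is its own image under the homotopy projection, so the two spectroids can be taken to have the same underlying objects.

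The main obstacle is the minimality lemma in the middle paragraph: one must be careful that the radical condition in the definition of $\mathfrak{p}(A)$ really forces $\partial h + h\partial$ into $\bigoplus_i \mathrm{rad}\,\mathrm{End}(P^i)$ at every degree, and that an endomorphism of a bounded complex which is invertible in each degree is invertible as a chain map (its degreewise inverse automatically commutes with the differentials). This is routine but is the place where the hypothesis "image of every differential lies in the radical" is essential; without it the statement is false, as the trivial null-homotopic complexes show.
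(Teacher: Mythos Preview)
Your argument is correct and is the standard minimality argument for this fact. Note, however, that the paper does not supply its own proof of Lemma~\ref{Lem:lemma2}: it is listed among the ``well known facts'' preceding Corollary~\ref{Cor:corollary1} and is simply cited from \cite{Be-Me}. So there is no paper-proof to compare against; what you have written is precisely the kind of justification one gives when unpacking this folklore statement, and the key step---that a homotopy equivalence between complexes in $\mathfrak{p}(A)$ is already an isomorphism of complexes because the homotopy correction $\partial h + h\partial$ has image in the radical at each degree---is exactly the reason the subcategory $\mathfrak{p}(A)$ is introduced in the first place.
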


Now, let $\overline{\mathcal{X}(A)}:=\{M^\bullet\in \text{ind}_0 \ \mathfrak{p}(A) \ | \ P_{\beta(M^\bullet)^\bullet}^\bullet \notin K^b(\text{pro} \ A) \}$ and let $\cong_{\mathcal{X}}$ be the equivalence relation defined on the set $\overline{\mathcal{X}(A)}$ by

$$M^\bullet\cong_{\mathcal{X}}N^\bullet \quad \text{if and only if} \quad P_{\beta(M^\bullet)^\bullet}^\bullet\cong P_{\beta(N^\bullet)^\bullet}^\bullet$$ in $K^{-,b}(\text{pro} \ A)$. Thus, we use the notation $\mathcal{X}(A)$ for a fixed set of representatives of the quotient set $\overline{\mathcal{X}(A)}$ by the equivalence relation $\cong_{\mathcal{X}}$.\\

From Theorem \ref{Th:Konig-Zimmermann} and Lemmas \ref{Lem:lemma1} and \ref{Lem:lemma2}, we obtain the following corollary.

\begin{corollary}\label{Cor:corollary1}
	There exist spectroids $\text{ind} \ D^b(A)$ and $\text{ind} \ \mathfrak{p}(A)$  of $D^b(A)$ and $\mathfrak{p}(A)$, respectively, such that 
	$$\text{ind}_0 \ D^b(A)= \text{ind}_0 \ \mathfrak{p}(A) \cup \{\beta(M^\bullet)^\bullet \ | \ M^\bullet \in \mathcal{X}(A) \}.$$
\end{corollary}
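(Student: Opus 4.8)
The plan is to assemble Corollary \ref{Cor:corollary1} by chasing the equivalence of Theorem \ref{Th:Konig-Zimmermann} together with the two truncation facts recorded in Lemmas \ref{Lem:lemma1} and \ref{Lem:lemma2}, and then splitting the indecomposables of $D^b(A)$ into those that are "genuinely bounded" and those that are not. First I would invoke Theorem \ref{Th:Konig-Zimmermann} to identify $D^b(A)$ with $K^{-,b}(\mathrm{pro}\,A)$, so that $\mathrm{ind}_0\,D^b(A)$ is the set of iso-classes of indecomposable objects of $K^{-,b}(\mathrm{pro}\,A)$. I would then decompose this set according to whether an indecomposable $N^\bullet$ lies in $K^b(\mathrm{pro}\,A)$ or in $K^{-,b}(\mathrm{pro}\,A)\setminus K^b(\mathrm{pro}\,A)$.

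For the first piece, Lemma \ref{Lem:lemma2} gives spectroids with $\mathrm{ind}_0\,\mathfrak{p}(A)=\mathrm{ind}_0\,K^b(\mathrm{pro}\,A)$; combined with the remark in the excerpt that every projective complex splits in the derived category as a complex in $\mathfrak{p}(A)$ plus a contractible one, this yields $\mathrm{ind}_0\,K^b(\mathrm{pro}\,A)=\mathrm{ind}_0\,\mathfrak{p}(A)$ inside $D^b(A)$, accounting for the term $\mathrm{ind}_0\,\mathfrak{p}(A)$ in the statement. For the second piece, given an indecomposable $N^\bullet\in K^{-,b}(\mathrm{pro}\,A)\setminus K^b(\mathrm{pro}\,A)$, Lemma \ref{Lem:lemma1} says $\beta(\alpha(N^\bullet)^\bullet)^\bullet$ is an indecomposable of $D^b(A)$ with $N^\bullet\cong P_{\beta(\alpha(N^\bullet)^\bullet)^\bullet}^\bullet$. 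Writing $M^\bullet:=\alpha(N^\bullet)^\bullet$, one checks $M^\bullet\in\mathrm{ind}_0\,\mathfrak{p}(A)$ (the brutal truncation stays in $\mathfrak{p}(A)$ and stays indecomposable), and since $P_{\beta(M^\bullet)^\bullet}^\bullet\cong N^\bullet\notin K^b(\mathrm{pro}\,A)$ we have $M^\bullet\in\overline{\mathcal{X}(A)}$. Hence every such $N^\bullet$ is iso to some $\beta(M^\bullet)^\bullet$ with $M^\bullet\in\mathcal{X}(A)$, and conversely each such $\beta(M^\bullet)^\bullet$ is an indecomposable object of $D^b(A)$ not in $\mathrm{ind}_0\,\mathfrak{p}(A)$.

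To finish, I would argue that the two sets in the union are disjoint and that the parametrization of the second set by $\mathcal{X}(A)$ is bijective: disjointness because members of $\{\beta(M^\bullet)^\bullet\mid M^\bullet\in\mathcal{X}(A)\}$ have projective resolutions outside $K^b(\mathrm{pro}\,A)$ whereas members of $\mathrm{ind}_0\,\mathfrak{p}(A)\subseteq\mathrm{ind}_0\,K^b(\mathrm{pro}\,A)$ do not; and injectivity because the equivalence relation $\cong_{\mathcal{X}}$ was defined precisely so that distinct representatives in $\mathcal{X}(A)$ give non-isomorphic $P_{\beta(M^\bullet)^\bullet}^\bullet$, hence non-isomorphic $\beta(M^\bullet)^\bullet$ in $D^b(A)$. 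One then declares $\mathrm{ind}\,D^b(A)$ to be the spectroid whose object set is this union, using $\mathrm{ind}\,\mathfrak{p}(A)$ as supplied by Lemma \ref{Lem:lemma2} for its first part.

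The main obstacle I expect is bookkeeping at the level of spectroids rather than a deep idea: one must make consistent choices of the spectroids $\mathrm{ind}\,\mathfrak{p}(A)$, $\mathrm{ind}\,K^b(\mathrm{pro}\,A)$, and $\mathrm{ind}\,D^b(A)$ so that the set-theoretic equality of object sets actually holds on the nose (not merely up to isomorphism), and verify that the assignment $N^\bullet\mapsto\alpha(N^\bullet)^\bullet$ respects isomorphism classes and lands in $\overline{\mathcal{X}(A)}$, so that passing to $\cong_{\mathcal{X}}$-representatives is well defined. These are exactly the compatibility checks that Lemmas \ref{Lem:lemma1} and \ref{Lem:lemma2} were set up to handle, so the proof is essentially a careful concatenation of those statements with Theorem \ref{Th:Konig-Zimmermann}.
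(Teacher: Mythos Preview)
Your proposal is correct and follows exactly the approach the paper indicates: the paper does not give a detailed proof but simply states that the corollary follows from Theorem~\ref{Th:Konig-Zimmermann} together with Lemmas~\ref{Lem:lemma1} and~\ref{Lem:lemma2}, which is precisely the concatenation you carry out. Your careful bookkeeping with the spectroids and the $\cong_{\mathcal{X}}$-equivalence is a faithful unpacking of what the paper leaves implicit.
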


\begin{remark}\label{Rem:remark1}
	If $A$ has finite global dimension, then $\mathcal{X}(A)=\emptyset$ and $\text{ind}_0 \ D^b(A)= \text{ind}_0 \ \mathfrak{p}(A)$.
\end{remark}

\subsection{Bondarenko's category}
In \cite{Bo}, (see also \cite{BoDr}), the author gives a description of all indecomposable objects in the category of representations of posets, nowadays known as the Bondarenko's category. This category is essential for our work due to its connection with the derived category of a gentle algebra. More specifically, V. Bekkert and H. Merklen proved in \cite{Be-Me} that the problem of finding the indecomposable objects in the derived category of a  gentle algebra may be reduced to find the indecomposables in a matrix problem presented and solved by Bondarenko in \cite{Bo}. In a similar way to \cite{Be-Me}, we will use the Bondarenko's category to describe an important family of indecomposables in the derived categories of string almost gentle algebras.\\

We consider a linearly ordered set $\mathcal{Y}$ endowed with an involution $\sigma$ (see \cite{Bo}). We will define the category $s(\mathcal{Y},k)$, called the \textit{Bondarenko's category of representations of posets}. 

The objects of $s(\mathcal{Y},k)$ are finite square block matrices $B=\begin{pmatrix}
B_{u}^{v}
\end{pmatrix}$ with entries in $k$, where $u, v\in\mathcal{Y}$. These matrices are called \textit{representations} or $\mathcal{Y}$-\textit{matrices} and satisfy the following conditions: 
\begin{enumerate}
	\item The horizontal and vertical partitions by blocks of $B$ are compatible. That is, the number of rows in $B_u$ is equal to the number of columns in $B^u$, for every $u\in\mathcal{Y}$. Here, $B_u$ (resp. $B^u$) represents the row $u$ (resp. the column $u$) of blocks of $B$.
	\item If $u,v\in\mathcal{Y}$ are such that $\sigma(u)=v$, then all matrices in $B_u$ have the same number of rows as all matrices in $B_v$ (and, consequently, all matrices in $B^u$ have the same number of columns as all matrices in $B^v$).
	\item $B^2=0$.
\end{enumerate}

 Also notice that some blocks may be empty. For example, the zero object in $s(\mathcal{Y}, k)$ corresponds to the matrix where all the blocks are empty.\\

A morphism in $s(\mathcal{Y},k)$ from $B$ to $C$ is a block matrix $T=\begin{pmatrix}T_u^v\end{pmatrix}$, where $u, v\in\mathcal{Y}$, with entries in $k$ such that the following conditions hold:
\begin{enumerate}
	\item The horizontal (resp. vertical) partition of $T$ is compatible with the vertical (resp. horizontal) partition of $B$ (resp. $C$). (In general, if $A, D$ are two block matrices, not necessarily square matrices, we say that the horizontal partition of $A$ is compatible with the vertical partition of $D$ if the number of rows in each $A_u$ is equal to the number of columns in each $D^u$ -- so that we can multiply $DA$ \textit{by blocks} --, and similarly we define what it means when the vertical partition of $A$ is compatible with the horizontal partition of $D$).
	\item $BT=CT$.
	\item If $v<u$, then $T_u^v=0$, i.e., all blocks below the main diagonal are zero blocks, where `$<$' is the order relation in the poset $\mathcal{Y}$.
	\item If $\sigma(u)=v$, then $T_u^u=T_v^v$.
\end{enumerate}

Since the matrices $T$ are triangular, in order to have an isomorphism $T$ in the category $s(\mathcal{Y},k)$, it is necessary and sufficient that the diagonal blocks of $T$, $T_u^u$, are all invertible. 

It is clear that $s(\mathcal{Y},k)$ is an additive $k$-category. It was shown in \cite{BoDr} that finding the indecomposable objects in $s(\mathcal{Y},k)$ can be reduced to find the indecomposables of a matrix problem introduced and solved in \cite{NaRo}.

The convenient notation for writing the indecomposable objects in the category $s(\mathcal{Y}, k)$ is described in the next subsection (see \cite{Be-Me}). 
\subsection{Indecomposables in $s(\mathcal{Y},k)$}\label{subsec-indecomposables-in-Bondarenko}

Let $\mathcal{Y}$ be a linearly ordered set with an involution $\sigma$. We define a quiver $Q(\mathcal{Y})$, associated to $\mathcal{Y}$, as follows: $$Q(\mathcal{Y})_0:=\mathcal{Y}/\sigma, \quad Q(\mathcal{Y})_1:=\mathcal{Y}\times\mathcal{Y}.$$
For any $\alpha\in\mathcal{Y}$, we denote by $\overline{\alpha}$ the class of $\alpha$ modulo $\sigma$, and for $\alpha, \beta\in\mathcal{Y}$ we set $s((\alpha,\beta))=\overline{\alpha}$ and $t((\alpha,\beta))=\overline{\beta}$. Let $p_i, \ i=1,2,$ be the natural projections of $\mathcal{Y}^2$ onto $\mathcal{Y}$. For instance $p_1((\alpha,\beta))=\alpha=p_2((\alpha,\beta)^{-1})$. Here, $(\alpha,\beta)^{-1}$  denotes the \textit{formal inverse} of $(\alpha,\beta)$, which verifies  $s((\alpha,\beta)^{-1})=t((\alpha,\beta))$, $t((\alpha,\beta)^{-1})=s((\alpha,\beta))$ and $((\alpha,\beta)^{-1})^{-1}=(\alpha,\beta)$. \\ 

\begin{description}
	\item[$\mathcal{Y}$-strings and $\mathcal{Y}$-bands]
\end{description}

Let $\overline{St(\mathcal{Y})}$ be the set of words $w=w_1\cdots w_n$ such that $w_i\in Q(\mathcal{Y})_1$ or $w_i^{-1}\in Q(\mathcal{Y})_1$ for $i=1,\dots n$,  $t(w_i)=s(w_{i+1})$ and $p_2(w_i)\neq p_1(w_{i+1})$ for $i=1,\dots, n-1$. Let us define the equivalence relation $\sim_s$ on the set $\overline{St(\mathcal{Y})}$ by $$w\sim_s u \quad \text{if and only if} \quad w=u \ \text{or} \ w=u^{-1}$$ where, if $u=u_1\cdots u_n$, then $u^{-1}=u_n^{-1}\cdots u_1^{-1}$.

We denote by $St(\mathcal{Y})$ a fixed set of representatives of such words over the equivalence relation $\sim_s$ together with the words of length zero. The elements of $St(\mathcal{Y})$ are called \textit{$\mathcal{Y}$-strings}.

Now, we consider the set of words $w=w_1\cdots w_n$ that satisfy the same conditions as above and additionally verify that $s(w)=t(w)$, $w^2\in\overline{St(\mathcal{Y})}$ and $w$ is not a power of another word of length less than $w$. Let $Ba(\mathcal{Y})$ be a fixed set of representatives of such words over the equivalence relation $\sim_r$ defined by $$w\sim_r u \quad \text{if and only if} \quad u=w \ \text{or} \ u=w[j]  \ \text{or} \ u=w^{-1}$$ where $w[j]$ is a cyclic permutation of $w$, i.e., $w[j]=w_{j+1}\cdots w_nw_1\cdots w_j$, for $j=1,\dots, n-1$. The elements of $Ba(\mathcal{Y})$ are called \textit{$\mathcal{Y}$-bands}. \\

For each $\mathcal{Y}$-string $w=w_1\cdots w_n$ we define an element $B_w$ of $s(\mathcal{Y}, k)$, that is a $\mathcal{Y}$-matrix and, in turn, a representation of $Q(\mathcal{Y})$ as follows. Let us consider a $k$-vector space with basis some set of $n+1$ vectors $v_0,\dots, v_n$. For any element $y\in\mathcal{Y}$, let $M_w(\overline{y})$ the subspace spanned by the set of $v_i$'s such that $c(i)=\overline{y}$, where $c(i)=t(w_i)$ for $i>0$ and $c(0)=s(w)=s(w_1)$. Now, for each arrow $(s,t)\in Q(\mathcal{Y})_1=\mathcal{Y}\times\mathcal{Y}$, we define the linear map $M_w(s,t)$ by 
$$
M_w(s,t)(v_i)= \left\{ \begin{array}{lc}
v_{i+1},  &   \text{if} \ w_{i+1}=(s,t). \\
v_{i-1},  &  \text{if}   \ w_i^{-1}=(s,t). \\
0,          &  \text{otherwise}.   
\end{array}
\right.
$$
With this, we define $(B_w)_s^t$ as the matrix representing $M_w(s,t)$ in the fixed basis $\{v_0,\dots, v_n\}$.\\

Analogously, for each $\mathcal{Y}$-band $w=w_1\cdots w_n$ and each indecomposable polynomial $f(x)=\alpha_1+\cdots+\alpha_dx^{d-1}+x^d$ (different from $x^d$), let us define a $\mathcal{Y}$-matrix $B_{w,f}$ as follows. Let us consider a $k$-vector space with basis some set of $n\times d$ vectors $v_{ij}$, for $i=0,\dots, n-1$ and $j=1,\dots,d$. Given any $y\in\mathcal{Y}$, let $M_{w,f}(\overline{y})$ be the subspace spanned by the set of $v_{ij}$'s such that $c(i)=\overline{y}$. Now, for each arrow $(s,t)\in Q(\mathcal{Y})_1$, let us define the linear map $M_{w,f}(s,t)$ by 
$$
M_{w,f}(s,t)(v_{ij})= \left\{ \begin{array}{ll}
v_{i+1 j},                                       & \text{if} \ i\neq n-1 \ \text{and} \ w_{i+1}=(s,t). \\
v_{i-1 j},                                       & \text{if} \ i\neq 0 \ \text{and} \  w_i^{-1}=(s,t). \\
v_{0 j+1}                                      & \text{if} \ i=n-1, j\neq d \ \text{and} \ w_n=(s,t). \\
v_{n-1 j+1}                                   & \text{if} \ i=0, j\neq d \ \text{and} \ w_n^{-1}=(s,t). \\
-\sum_{r=1}^{d}\alpha_rv_{0r}      & \text{if} \ i=n-1, j=d \ \text{and} \ w_n=(s,t). \\
-\sum_{r=1}^{d}\alpha_rv_{n-1 r}  & \text{if} \ i=0, j=d \ \text{and} \ w_n^{-1}=(s,t). \\
0,                                                &  \text{otherwise}.   
\end{array}
\right.
$$
Now, we denote by $(B_{w,f})_s^t$ the matrix representing $M_{w,f}(s,t)$ in the basis $\{v_{ij}\}$.\\

From Proposition 1 in \cite{Bo}  (section 1, p. 59) and Theorem 3 in \cite{BoDr} (section 2, p 2521), we obtain the following theorem, which will be fundamental in the next sections.

\begin{theorem}\label{Th-indecomposables-in-Bondarenko}
	Let $\mathcal{Y}=(\mathcal{Y}, \sigma)$ be a linearly ordered set with involution. Then $$\text{ind}_0 \ s(\mathcal{Y}, k)=\{B_u \mid u\in St(\mathcal{Y})\}\dot{\cup}\{B_{v,f} \mid v\in Ba(\mathcal{Y}), f\in\text{Ind}\ k[x]\}$$
	where $k[x]$ is the $k$-algebra of polynomials in the variable $x$.
\end{theorem}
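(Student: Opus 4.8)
The plan is to deduce this statement directly from the two cited sources rather than to reprove the classification of indecomposables for the underlying matrix problems from scratch. The key observation is that the construction in Section \ref{subsec-indecomposables-in-Bondarenko} is exactly a translation of Bondarenko's combinatorial data (words in the letters $\mathcal{Y}\times\mathcal{Y}$ and indecomposable polynomials) into the language of $\mathcal{Y}$-matrices, so that the real content is a bijection between the normal forms appearing in \cite{Bo} and \cite{BoDr} on one side and the symbols $B_u$, $B_{v,f}$ on the other.

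First I would recall precisely the statement of Proposition 1 in \cite{Bo} and of Theorem 3 in \cite{BoDr}: together they assert that $s(\mathcal{Y},k)$ is a Krull--Schmidt category whose indecomposable objects, up to isomorphism, are parametrized by a list of explicit normal forms, the ``strings'' and ``bands'' of the matrix problem (in \cite{BoDr} this is obtained by reducing to the problem solved in \cite{NaRo}). Then I would verify that the combinatorial bookkeeping of those normal forms matches ours: a string normal form corresponds to a word $w=w_1\cdots w_n$ satisfying the admissibility conditions $t(w_i)=s(w_{i+1})$ and $p_2(w_i)\neq p_1(w_{i+1})$ (the latter encoding that consecutive letters do not ``cancel'' in the matrix problem), taken up to the reversal relation $\sim_s$ — i.e. an element of $St(\mathcal{Y})$; and a band normal form is such a word that is additionally cyclic, aperiodic, and $w^2\in\overline{St(\mathcal{Y})}$, paired with an indecomposable polynomial $f\neq x^d$, taken up to rotation and reversal — i.e. an element of $Ba(\mathcal{Y})$ together with $f\in\mathrm{Ind}\,k[x]$. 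The matching of the equivalence relations ($\sim_s$, $\sim_r$) with the symmetries of the matrix problem (transposition/reversal, cyclic shift) is the bookkeeping that has to be checked carefully.

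Next I would check that under this parametrization the $\mathcal{Y}$-matrix attached to a normal form in \cite{Bo}, \cite{BoDr} is isomorphic in $s(\mathcal{Y},k)$ to the matrix $B_w$ (resp. $B_{w,f}$) built in Section \ref{subsec-indecomposables-in-Bondarenko}. For strings this is immediate: the linear maps $M_w(s,t)$ are the obvious ``shift by one along the word'' maps, and writing them in the fixed basis $v_0,\dots,v_n$ reproduces the canonical block form; one checks $B_w^2=0$ from the condition $p_2(w_i)\neq p_1(w_{i+1})$, which guarantees that two consecutive nonzero shifts never land in compatible blocks, and the block-partition/involution conditions (1)--(2) for objects of $s(\mathcal{Y},k)$ from the definition of $M_w(\overline{y})$. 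For bands the argument is the same with the companion matrix of $f$ governing the ``wrap-around'' identification $v_{n-1\,d}\mapsto -\sum_r \alpha_r v_{0r}$; the fact that $f$ is indecomposable and $w$ aperiodic is precisely what makes $B_{w,f}$ indecomposable, and distinct choices give non-isomorphic objects by the same invariants (the pair of the underlying word class and the similarity class of the companion matrix, recoverable from $B_{w,f}$). Finally I would note that the two families are disjoint — no $B_u$ is isomorphic to any $B_{v,f}$ — because $B_u$ is annihilated by a large power of the ``shift'' operator built from its blocks while $B_{v,f}$ is not, or more simply because on the matrix-problem side strings and bands are already disjoint normal forms.

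The main obstacle is the careful comparison of conventions: the papers \cite{Bo} and \cite{BoDr} describe the answer in the language of a specific matrix problem with its own notation for strings, bands, and the allowed reductions, and one must be meticulous to see that our admissibility conditions on words, our choice of representatives under $\sim_s$ and $\sim_r$, and our normalization of the band matrices via companion matrices of indecomposable polynomials produce exactly one representative from each isomorphism class and no class is missed. Once the dictionary is fixed, the rest is the routine verification that the explicitly written $\mathcal{Y}$-matrices $B_w$ and $B_{w,f}$ are the canonical forms, which is bookkeeping with block matrices. I would present the proof as: (i) quote the classification from \cite{Bo,BoDr}; (ii) set up the bijection between their normal forms and $St(\mathcal{Y})$, resp. $Ba(\mathcal{Y})\times\mathrm{Ind}\,k[x]$; (iii) check that $B_w$, $B_{w,f}$ realize these normal forms and lie in $s(\mathcal{Y},k)$; (iv) conclude disjointness and exhaustiveness, which is exactly the displayed equality.
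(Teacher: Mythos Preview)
Your proposal is correct and matches the paper's approach exactly: the paper does not prove this theorem at all but simply records it as a consequence of Proposition 1 in \cite{Bo} and Theorem 3 in \cite{BoDr}, with no further argument. Your outline actually supplies more detail than the paper, spelling out the dictionary between the normal forms in \cite{Bo,BoDr} and the symbols $B_u$, $B_{v,f}$; this is precisely the kind of bookkeeping a reader would need to do to verify the citation, and it is fine as a proof sketch, though in the paper itself the theorem is treated as a black-box import.
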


\section{String almost gentle algebras}\label{Sec:SAGalgebras}

We begin by recalling the definition of a special biserial algebra and, in particular, of a string algebra.

\begin{definition}
	The algebra $A=kQ/I$ is called \textit{special biserial} if it satisfies the following conditions:
	\begin{enumerate}
		\item Any vertex of $Q$ is the starting point of at most two arrows. Any vertex of $Q$ is the ending point of at most two arrows.
		\item Given an arrow $a\in Q_1$, there is at most one arrow $b\in Q_1$ with $s(b)=t(a)$ and $ab\notin I$.
		\item Given an arrow $a\in Q_1$, there is at most one arrow $c\in Q_1$ with $t(c)=s(a)$ and $ca\notin I$.
	\end{enumerate}
\end{definition}

It is a well known fact that, for special biserial algebras, $I$  can be generated by zero relations and by commutativity relations.

\begin{definition}
	A special biserial algebra $A=kQ/I$ is called a \textit{string algebra} if, additionally, $I$ is generated by zero relations, i.e., by paths of length greater than or equal to 2.
\end{definition}

Now, we recall the definition of \textit{almost gentle algebras}, given by E. Green and S. Schroll in \cite{Gr-Sc}. Besides, we put together this definition with the corresponding one for string algebras in order to define the class of \textit{string almost gentle algebras} or \textit{SAG algebras} for short.

\begin{definition}\label{Def-almost-gentle}
	An algebra $A=kQ/I$ is called an \textit{almost gentle algebra} if 
	\begin{enumerate}
		\item $I$ is generated by paths of lengths 2.
		\item For every arrow $a\in Q_1$ there is at most one arrow $b\in Q_1$ such that $ab\notin I$ and at most one arrow $c\in Q_1$ such that $ca\notin I$.
	\end{enumerate}	
\end{definition}
Thus, an algebra is almost gentle if it is Morita equivalent to a special multiserial algebra $kQ/I$, where $I$ is generated by monomial relations of length two. 

\begin{definition}
	An algebra $A=kQ/I$ is called a \textit{string almost gentle algebra} or, simply, a \textit{SAG algebra} if it satisfies the following conditions:
	\begin{enumerate}
		\item Any vertex of $Q$ is the starting point of at most two arrows. Any vertex of $Q$ is the ending point of at most two arrows.
		\item Given an arrow $a\in Q_1$, there is at most one arrow $b\in Q_1$ with $s(b)=t(a)$ and $ab\notin I$.
		\item Given an arrow $a\in Q_1$, there is at most one arrow $c\in Q_1$ with $t(c)=s(a)$ and $ca\notin I$.
		\item $I$ is generated by paths of lengths two.
	\end{enumerate}
\end{definition}

Thus, the class of SAG algebras is the intersection of the class of string algebras with the class of almost gentle algebras. The following result about SAG algebras establishes that maximal paths intersect only in vertices.
\begin{lemma}\label{lem-3.1}
	Let $A=kQ/I$ be a SAG algebra. Then two different maximal paths in $\textbf{M}$ cannot have a common arrow.
\end{lemma}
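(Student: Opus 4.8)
The plan is to argue by contradiction, so suppose $\widetilde{u}$ and $\widetilde{v}$ are two distinct maximal paths in $\textbf{M}$ sharing a common arrow. Among all arrows they have in common, pick one, say $a$, and look at the maximal sub-path $c = c_1 \cdots c_m$ of $\widetilde{u}$ that is \emph{also} a sub-path of $\widetilde{v}$ and contains $a$; such a longest common sub-path exists because both words are finite. Since $\widetilde{u} \neq \widetilde{v}$, this common piece $c$ cannot be all of $\widetilde{u}$ nor all of $\widetilde{v}$ (otherwise one would be a sub-path of the other, and two maximal paths that are comparable under the sub-path relation must be equal, contradicting $\widetilde{u}\neq\widetilde{v}$). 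Hence $c$ fails to exhaust at least one of the two maximal paths on the left, or at least one of them on the right; without loss of generality say $c$ is a proper left-truncation in both cases we handle symmetrically, and focus on what happens at one of the two ends.

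Concentrate on the right end of $c$. If $c$ is a proper prefix of \emph{both} $\widetilde{u}$ and $\widetilde{v}$, then there are arrows $b \in Q_1$ with $c_m b$ a sub-path of $\widetilde{u}$ and $b' \in Q_1$ with $c_m b'$ a sub-path of $\widetilde{v}$; moreover $b \neq b'$, for if $b = b'$ the common sub-path $c$ could be extended by $b$ on the right, contradicting maximality of $c$. Now both $c_m b \notin I$ and $c_m b' \notin I$ (they are sub-paths of paths not in $I$), with $s(b) = s(b') = t(c_m)$ and $b \neq b'$. This directly violates condition (2) in the definition of a SAG algebra, which forbids two distinct arrows $b$ with $s(b) = t(c_m)$ and $c_m b \notin I$. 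The remaining possibilities are that $c$ is all of $\widetilde{u}$ on the right but a proper prefix of $\widetilde{v}$ (or vice versa), and the analogous statements at the left end; these are handled the same way. At the left end, if $c$ is a proper suffix of both maximal paths, one finds distinct arrows $d, d'$ with $d c_1 \notin I$, $d' c_1 \notin I$, $t(d) = t(d') = s(c_1)$, contradicting condition (3) instead.

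It remains to rule out the mixed cases, where the common sub-path $c$ terminates in $\widetilde{u}$ but extends further in $\widetilde{v}$ (or the mirror situation at the other end or with the roles of $\widetilde{u},\widetilde{v}$ swapped). Suppose $c = \widetilde{u}$, so $\widetilde{u}$ is a sub-path of $\widetilde{v}$ and $\widetilde{u} \neq \widetilde{v}$; then $\widetilde{v} = \hat{w}\,\widetilde{u}\,\bar{w}$ with at least one of $\hat{w}, \bar{w}$ nontrivial, say $\bar{w}$ starts with an arrow $b$. Then $\widetilde{u} b$ is a sub-path of $\widetilde{v}$, hence not in $I$, which contradicts the maximality of $\widetilde{u}$ (by definition, $\widetilde{u} b$ is not a path in $k(Q,I)$ for any arrow $b$). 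The case $\hat{w}$ nontrivial is symmetric, using that no arrow can be prepended to a maximal path. In every case we reach a contradiction, so no two distinct maximal paths share an arrow.

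The crux of the argument is really the combinatorial bookkeeping of the ``longest common sub-path'' and the case split at its two ends; the individual contradictions are immediate from the SAG axioms, so I do not anticipate a serious obstacle, only the need to state the end-point analysis cleanly so that all four corner cases (two ends $\times$ two choices of which maximal path is exhausted) are visibly covered.
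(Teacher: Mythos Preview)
Your approach is essentially the paper's: argue by contradiction, locate the point where the two maximal paths diverge, and derive a violation of either the SAG branching conditions (2)/(3) or of maximality. Your device of fixing the longest common sub-path $c$ up front is a cleaner substitute for the paper's step of picking an arbitrary common arrow and then walking backward arrow-by-arrow, but the underlying case split is the same.

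There is one genuine gap in your case analysis. In your third paragraph you set out to treat the ``mixed'' end-cases, but you then specialize to $c=\widetilde{u}$, i.e.\ to $\widetilde{u}$ being an actual sub-path of $\widetilde{v}$. That misses the situation where $c_m$ is the last arrow of $\widetilde{u}$ while $c$ still extends to the right in $\widetilde{v}$ by an arrow $b'$, yet $c\neq\widetilde{u}$ because $c$ also extends to the \emph{left} in $\widetilde{u}$ (and symmetrically $c_1$ is the first arrow of $\widetilde{v}$). In this configuration $\widetilde{u}b'$ is \emph{not} a sub-path of $\widetilde{v}$, so your sentence ``$\widetilde{u}b$ is a sub-path of $\widetilde{v}$, hence not in $I$'' does not apply. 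The fix is short and is exactly what the paper's second case does: from $c_mb'\notin I$ (it is a length-two sub-path of $\widetilde{v}$) and the SAG condition (4) that $I$ is generated by paths of length two, every length-two sub-path of $\widetilde{u}b'$ lies outside $I$, hence $\widetilde{u}b'\in\textbf{Pa}$, contradicting maximality of $\widetilde{u}$. Once you insert this one line (and its mirror at the left end), your argument is complete.
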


\begin{proof}[Proof]
	Suppose that $w, w'$ are two different maximal paths and $a$ is a common arrow to both of them. Then, if $a$ is followed by an arrow $b$ in $w$ and another arrow $c$ in $w'$, we get a contradiction with the definition of SAG algebras, because we would have that $ab\notin I$ and $ac\notin I$. If $a$ is the last arrow of only one of $w$ and $w'$, we have a contradiction with the maximality of this path. Finally, if $a$ is the last arrow of both paths, we consider the previous arrow in each path. If, in fact, this arrow is common, we consider the previous one. After a finite number of steps  (since $Q$ is finite), we must find a not common arrow such that all the following are common arrows. But this is a contradiction with the definition of SAG algebras because we would have two compositions that do not lie in $I$.
\end{proof}

\begin{remark}\label{rem-almost-gentle}
	We notice that the previous lemma remains valid if we require only that the algebra is almost gentle. Thus, for almost gentle algebras maximal paths intersect only in vertices.
\end{remark}

An immediate consequence of Lemma \ref{lem-3.1} is that maximal paths for SAG algebras ({\it a fortiori}, in light of remark \ref{rem-almost-gentle}, for almost gentle algebras) are unique for each arrow.

\begin{corollary}
	Let $A=kQ/I$ be a SAG algebra. Then $Q$ cannot have a vertex which is the ending point of three, different, maximal paths in $\textbf{M}$.
\end{corollary}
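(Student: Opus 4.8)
The plan is to argue by contradiction using Lemma \ref{lem-3.1}. Suppose a vertex $i\in Q_0$ were the ending point of three distinct maximal paths $w$, $w'$, $w''$ in $\textbf{M}$. Each of these is a nontrivial path (a maximal path is a path; if it had length zero it would consist of the trivial path $e_i$, but then no arrow could be adjoined, forcing $Q$ to have no arrows at $i$, and anyway the three ``paths'' would coincide), so each of $w$, $w'$, $w''$ ends in an arrow: write $w = \cdots a$, $w' = \cdots a'$, $w'' = \cdots a''$ where $a,a',a''\in Q_1$ all satisfy $t(a)=t(a')=t(a'')=i$.

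Next I would invoke condition (1) in the definition of a SAG algebra: the vertex $i$ is the ending point of at most two arrows. Hence among $a$, $a'$, $a''$ at least two must coincide, say $a = a'$ (the argument is symmetric in the three labels). By Lemma \ref{lem-3.1}, two \emph{different} maximal paths in $\textbf{M}$ cannot share a common arrow; since $w$ and $w'$ share the arrow $a = a'$, we conclude $w = w'$, contradicting the assumption that the three maximal paths are pairwise distinct. This completes the proof.

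There is essentially no technical obstacle here; the statement is a direct corollary of Lemma \ref{lem-3.1} combined with the ``at most two arrows ending at a vertex'' axiom. The only point requiring a sentence of care is the reduction to the case where all three maximal paths are nontrivial, i.e.\ actually end in an arrow rather than being the trivial path at $i$ — once that is dispatched, the pigeonhole step on the final arrows and the application of Lemma \ref{lem-3.1} finish things immediately. I would keep the write-up to three or four sentences.
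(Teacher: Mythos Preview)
Your proof is correct and follows essentially the same approach as the paper: use the SAG axiom that at most two arrows end at a given vertex to force two of the three maximal paths to share their last arrow, then invoke Lemma \ref{lem-3.1} for the contradiction. The paper's version omits your sentence about ruling out trivial maximal paths, but otherwise the arguments are identical.
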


\begin{proof}
	Since $A$ is a SAG algebra, then there are at most two arrows ending in a given vertex. Thus, if this vertex is the ending point of three, different, maximal paths in $\textbf{M}$, then at least two of them must share the last arrow, but this contradicts the previous lemma.
\end{proof}

\subsection{The Functor}\label{Subsec: the functor} 

This section deals with an adapted version of the construction made in \cite{Be-Me} of a functor $F$ from the category $\mathfrak{p}(A)$ to the category $s(\mathcal{Y}(A), k)$, when $A$ is a SAG algebra. This functor and its properties will allow us to prove that string and band complexes are indecomposable objects in $D^b(A)$.\\

We start with a finite complex $P^{\bullet}\in\mathfrak{p}(A)$ of length $m$, that is, a complex $P^{\bullet}$ of projective modules  such that the image of every differential map is contained in the radical of the corresponding projective module. The complex $P^{\bullet}$ has the form

$$\xymatrix{\cdots\ar[r]&0\ar[r]&P^n\ar[r]^{\partial^n}&\cdots\ar[r]&P^{n+m-1}\ \ar[r]^(0.55){\partial^{n+m-1}} \ &P^{n+m}\ \ar[r]&0\ar[r]&\cdots}$$
with $n,m\in\mathbb{Z}$.\\

If in each  $P^j$ of the complex $P^{\bullet}$, the indecomposable projective module $P_i$ appears $d_{i,j}$ times, we will use the notation $P_i^{d_{i,j}}$.  Therefore, we can write the complex $P^{\bullet}$ as

$$\xymatrix{0\ar[r]&\displaystyle\bigoplus_{i=1}^{t}P_i^{d_{i,n}}\ar[r]^(0.65){\partial^n}&\cdots\ar[r]&\ \displaystyle\bigoplus_{i=1}^{t}P_i^{d_{i,n+m-1}} \ar[r]^(0.55){\partial^{n+m-1}} \ &\ \displaystyle \bigoplus_{i=1}^{t}P_i^{d_{i,n+m}}\ar[r]&0}$$ where $|Q_0|=t$.\\

 Since every projective module is a finite direct sum of indecomposable projective modules, each morphism (differential) in the complex $P^{\bullet}$ is given by a block matrix (of size $\sum_id_{i,j}\times\sum_id_{i,j+1}$ if the differential goes from the place $j$ to the place $j+1$). Thus, each block gives the component of the morphism corresponding to each pair of indecomposables. In other words, each block corresponds to a morphism  $P_r^{d_{r,j}}\longrightarrow P_s^{d_{s,j+1}}$.\\
 
 It is well known that the paths $w\in\textbf{Pa}$  such that $s(w)=r$ and $t(w)=s$ form a basis for $\text{Hom}(P_r,P_s)$. In particular, on the category $\mathfrak{p}(A)$, we can assume that only the paths $w\in\textbf{Pa}_{\geq 1}$ are involved, since trivial paths give isomorphisms.

 If $w\in\textbf{Pa}_{\geq 1}$ is one of these paths, it defines the morphism $p(w): P_r\longrightarrow P_s$, given by  $u\mapsto v=uw$. It follows that any homomorphism from $P_r$ to $P_s$ is associated to a linear combination of these $p(w)$.\\

Recall that a nontrivial path $w$ determines a maximal path $\widetilde{w}=\hat{w}w\bar{w}$, where $\hat{w}, \bar{w}\in \textbf{Pa}$. According to Lemma \ref{lem-3.1}, this maximal path is unique for SAG algebras. Hence, $\hat{w}=u$ is a special basis element of $P_r$ such that $s(u)=s(\widetilde{w}), \quad t(u)=s(w)$. In addition $v=uw\neq 0$ is a sub-path of the maximal path  $\widetilde{u}=\widetilde{w}$ and it is an element in the basis of $P_s$. Consequently, the pair of nonzero paths $(u,v)$ with the same starting point such that $\widetilde{u}=\widetilde{v} \ (=\widetilde{w})$ and $l(v)>l(u)\geq 1$ determines a path $w\in\textbf{Pa}_{\geq 1}$ such that $v=uw$.\\

Now, the representation of the complex $P^{\bullet}$ is determined by representation of the sequence of morphisms $\partial^j$, $j=n,\ldots,n+m-1$, (and viceversa). In turn, each sequence of $\partial^j$ is given by a block matrix $A=\left(A_{r,j}^{s,j+1}\right)$, which depends on the `multiplicities' of the morphisms $p(w)$ in $\partial^j$. Precisely, each $\partial^j$ is represented by a formal sum:

 $$\partial^j=\sum_{w\in \textbf{Pa}_{\geq 1}}p(w)A_{w,j}$$ where $A_{w,j}$ denotes the block that expresses the   `multiplicities' of the morphism $p(w)$ at $\partial^j$. Let us explain this last sentence in  more detail. Fix the place $j$ of the complex $P^{\bullet}$. The component of $\partial^j$ going from $P_r^{d_{r,j}}$ to $P_s^{d_{s,j+1}}$ is represented by a matrix (a block)
$$A_{r,j}^{s,j+1}\in \text{Mat}\left(d_{r,j}\times d_{s,j+1};k( \langle p(w_1),\dots, p(w_l)\rangle)\right),$$ 
where the  $w_i$'s are parallel (nontrivial) paths from  $r$ to $s$ and $k( \langle  p(w_1),\dots, p(w_l)\rangle)$ is the $k$-vector space with basis  $\{p(w_1),\dots, p(w_l)\}$. It is then clear that $A_{r,j}^{s,j+1}$ can be written uniquely as $$A_{r,j}^{s,j+1}=\sum_{i=1}^{l}p(w_i)A_{w_i,j},$$ where $A_{w_i,j}\in\text{Mat}\left(d_{r,j}\times d_{s,j+1}; k\right)$.\\

\textbf{The poset $\mathcal{Y}(A)$}\\

The definition of the poset $\mathcal{Y}(A)$ is, in a nutshell, the product of two posets: the first one corresponds to the paths and the second one to the places  $j$  in the complex $P^{\bullet}$. More precisely, for each maximal path $m\in\textbf{M}$ we define a poset $\mathcal{Y}_m$ as the set of all sub-paths  $u$ of $m$ such that $s(u)=s(m)$ and ordered by their length. Thus, we define the poset  $\mathcal{Y}(A)$ as $$\mathcal{Y}(A):=\left(\displaystyle\bigcup_{m\in \textbf{M}} \mathcal{Y}_m\right)\times\mathbb{Z},$$
where the first component is an ordered disjoint union (we put on $\textbf{M}$ a fixed linear order). Now $\mathcal{Y}(A)$ is ordered anti-lexicographically, that is 
$$[u,i]<[v,j] \quad \text{iff} \quad i<j \ \text{or} \ (i=j \ \text{and} \ \widetilde{u}<\widetilde{v}) \ \text{or} \ (i=j, \widetilde{u}=\widetilde{v} \ \text{and} \ l(u)<l(v)).$$
It should be noticed that it is possible that a trivial path $e_r$ belongs to two different maximal paths. If this happens, the two occurrences of $e_r$ must be regarded as different.
Now, we endow the poset $\mathcal{Y}(A)$ with an involution $\sigma$ (see \cite{Be-Me}). Let us first define an equivalence relation on $\mathcal{Y}(A)$ by
$$[u,i]\cong_\sigma [v,j] \quad \text{if and only if} \quad i=j \ \ \text{and} \ \ t(u)=t(v).$$

 It follows that, for each fixed place $j$ in the complex, $[u,j]\cong_\sigma [v,j]$ if and only if the paths $u,v\in \displaystyle\bigcup_{m\in \textbf{M}}\mathcal{Y}_m$ correspond to the same indecomposable projective. Therefore, we define $\sigma$ as being the involution corresponding to $\cong_\sigma$.\\

Lemma \ref{lem-3.1} guarantees the well definition of $\sigma$. In fact, since there are no more than two nontrivial paths ending at a given vertex, we define: either $\sigma(u)=u$, if there is only one path $u$ ending on the vertex or $\sigma(u)=v$, if there are two ending on it.

\begin{corollary}\label{cor3.3}
	Let $A=kQ/I$ be a SAG algebra and let $u,v$ be two nontrivial paths from $\displaystyle\bigcup_{m\in \textbf{M}}\mathcal{Y}_m$ such that $\widetilde{u}\neq \widetilde{v}$ and $t(u)=t(v)$. Then,  $u\neq v$ and $u,v$ cannot have a common arrow. 
\end{corollary}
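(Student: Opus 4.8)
The plan is to prove the corollary directly from Lemma~\ref{lem-3.1} together with a short contradiction argument. First I would observe that the inequality $\widetilde{u}\neq\widetilde{v}$ immediately forces $u\neq v$: indeed, since $u,v$ lie in $\bigcup_{m\in\textbf{M}}\mathcal{Y}_m$, each of them is a sub-path of a maximal path sharing its starting point, and by Lemma~\ref{lem-3.1} (uniqueness of the maximal path through an arrow, and hence through any nontrivial path) the maximal path $\widetilde{u}$ containing $u$ is uniquely determined by $u$; so if $u=v$ then $\widetilde{u}=\widetilde{v}$, contrary to hypothesis. Hence $u\neq v$, which establishes the first assertion.

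For the second assertion, I would argue by contradiction: suppose $u$ and $v$ have a common arrow, and let $a$ be such an arrow. Since $u$ and $v$ both start at $s(m)$ for their respective maximal paths and both end at the common vertex $t(u)=t(v)$, I would look at the sub-path of $u$ (resp.\ of $v$) running from $t(a)$ to $t(u)$ (resp.\ to $t(v)$): call these tails $u'$ and $v'$. Now $au'$ and $av'$ are both nontrivial sub-paths of maximal paths, hence not in $I$, and they both start with the arrow $a$ and end at the same vertex. If $u'$ and $v'$ are both trivial, then $t(a)=t(u)=t(v)$ and $u,v$ are two nontrivial paths ending at the same vertex whose last arrow is $a$; by the SAG condition ((3): at most one arrow $c$ with $t(c)=s(a)$ and $ca\notin I$) one shows, tracing backwards from $a$ as in the proof of Lemma~\ref{lem-3.1}, that $u=v$, and then $\widetilde{u}=\widetilde{v}$ — contradiction. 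If at least one of $u',v'$ is nontrivial, let $b$ be the first arrow of $u'$ and $b'$ the first arrow of $v'$ (taking $b$ or $b'$ to be "empty" in the degenerate case). Since $ab\notin I$ and $ab'\notin I$, condition (2) of the SAG definition forces $b=b'$; iterating this comparison arrow by arrow along $u'$ and $v'$ (both of which are finite) shows that $u'$ and $v'$ agree as long as both continue, and the only way one can terminate before the other is at the common endpoint $t(u)=t(v)$, which forces them to terminate simultaneously; hence $u'=v'$ and therefore $u=v$, again contradicting $\widetilde{u}\neq\widetilde{v}$.

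Alternatively, and more cleanly, I would reduce directly to Lemma~\ref{lem-3.1}: if $u$ and $v$ share an arrow $a$, then $\widetilde{u}$ and $\widetilde{v}$ are two maximal paths both containing $a$; by Lemma~\ref{lem-3.1} this forces $\widetilde{u}=\widetilde{v}$, contradicting the hypothesis $\widetilde{u}\neq\widetilde{v}$. This is the heart of the matter and disposes of the common-arrow claim in one line. Combined with the first paragraph (which handles $u\neq v$), the proof is complete.

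\textbf{Main obstacle.} The only delicate point is making sure that the passage "$u$ is a sub-path of a unique maximal path $\widetilde{u}$" is justified — this is exactly the content of Lemma~\ref{lem-3.1} and the paragraph following it in the text (maximal paths are unique for each arrow, hence for each nontrivial sub-path), so there is no real obstacle here; the corollary is essentially a restatement of Lemma~\ref{lem-3.1} packaged for later use. I would therefore keep the write-up to two or three sentences, citing Lemma~\ref{lem-3.1} for the common-arrow part and the uniqueness of $\widetilde{u}$ for the $u\neq v$ part.
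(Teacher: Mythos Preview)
Your proposal is correct, and in fact the paper states Corollary~\ref{cor3.3} without proof, treating it as an immediate consequence of Lemma~\ref{lem-3.1}. Your ``alternative'' one-line argument---if $u$ and $v$ share an arrow $a$, then $\widetilde{u}$ and $\widetilde{v}$ are two maximal paths through $a$, forcing $\widetilde{u}=\widetilde{v}$ by Lemma~\ref{lem-3.1}---is exactly the intended deduction; your longer arrow-by-arrow argument is unnecessary (though not wrong), so I would keep only the clean version in the write-up.
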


For the next proposition, we will use the following notation: let $u,v,w$ be three nontrivial paths from $\displaystyle\bigcup_{m\in \textbf{M}}\mathcal{Y}_m$, which are, respectively, sub-paths of the maximal paths $\widetilde{u}, \widetilde{v}, \widetilde{w}$ of $\textbf{M}$ and such that $t(u)=t(v)=t(w)$.

\begin{proposition}\label{prop-3.4}
	Let $A=kQ/I$ be a SAG algebra. Then, at least two of the paths $u, v, w$ must be equal.
\end{proposition}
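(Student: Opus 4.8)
The plan is to argue by contradiction: suppose $u$, $v$, $w$ are pairwise distinct and derive a contradiction from the SAG axioms together with Lemma~\ref{lem-3.1} (which, recall, guarantees that the maximal path $\widetilde{u}$ of a nontrivial path $u$ is unique). Since $u$, $v$, $w$ are nontrivial and end at the common vertex $x:=t(u)=t(v)=t(w)$, their respective last arrows $a$, $b$, $c$ all end at $x$. By condition~(1) in the definition of a SAG algebra at most two arrows of $Q$ end at $x$, so by the pigeonhole principle two of $a$, $b$, $c$ coincide; relabelling $u$, $v$, $w$ if necessary, we may assume $a=b$.

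Next I would upgrade this coincidence of last arrows to a coincidence of maximal paths. The arrow $a$ occurs in $\widetilde{u}$ (it is the last arrow of the prefix $u$ of $\widetilde{u}$) and, since $b=a$, it also occurs in $\widetilde{v}$; thus $\widetilde{u}$ and $\widetilde{v}$ share an arrow, and Lemma~\ref{lem-3.1} forces $\widetilde{u}=\widetilde{v}=:m$. Now $u$ and $v$ are two prefixes of the \emph{single} maximal path $m$, and both end with the arrow $a$. Hence it suffices to prove the auxiliary claim that \emph{no maximal path of a SAG algebra contains a repeated arrow}: for then $a$ occurs only once in $m$, there is a unique prefix of $m$ ending with $a$, and so $u=v$, contradicting $u\neq v$. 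The remaining configuration (all three of $a$, $b$, $c$ equal, as opposed to exactly two) is subsumed in the case $a=b$, so every case reduces to the auxiliary claim.

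The heart of the argument, and the step I expect to be the main obstacle, is therefore this auxiliary claim, which I would establish by a finite-descent argument in the style of the proof of Lemma~\ref{lem-3.1}. Write $m=m_1\cdots m_k$ and suppose $m_i=m_j$ with $i<j$. Using condition~(2) of the SAG definition, namely that each arrow has at most one ``successor'' arrow with which its composition lies outside $I$, one proves by induction on $s$ that $m_{i+s}=m_{j+s}$ for every $s$ with $j+s\le k$. Taking $s=k-j$ gives $m_{i+(k-j)}=m_k$ with $i+(k-j)<k$, so the last arrow of $m$ already occurs at an earlier, non-final position, where it is followed by some arrow $d$ with $m_k\cdot d\notin I$. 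Since for a SAG algebra $I$ is generated by paths of length two, the only length-two subpaths of $m\cdot d$ are those of $m$ together with $m_k\cdot d$, all outside $I$; hence $m\cdot d\notin I$, i.e.\ $m$ can be extended on the right, contradicting its maximality. This proves the auxiliary claim and completes the proof.
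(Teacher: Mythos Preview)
Your argument is correct. Both your proof and the paper's open with the same pigeonhole step: at most two arrows of $Q$ end at the common vertex, so two of the three last arrows coincide. The divergence comes afterwards. The paper first asserts that if $u,v,w$ are pairwise distinct then so are $\widetilde{u},\widetilde{v},\widetilde{w}$, and then derives a contradiction because the shared last arrow would force two of these supposedly distinct maximal paths to have a common arrow, against Lemma~\ref{lem-3.1}. You instead use Lemma~\ref{lem-3.1} in the forward direction: the shared arrow $a$ forces $\widetilde{u}=\widetilde{v}=:m$, so $u$ and $v$ are two prefixes of the same maximal path ending in the same arrow, and your auxiliary claim that no maximal path in a SAG algebra contains a repeated arrow then pins down $u=v$.

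What this buys you is a genuinely self-contained argument. The paper's implication ``$u\neq v \Rightarrow \widetilde{u}\neq\widetilde{v}$'' is not justified as written: two distinct nontrivial prefixes of a single maximal path can share an endpoint (for instance $a$ and $abc$ in the SAG algebra with quiver $1\xrightarrow{a}2\xrightarrow{b}3\xrightarrow{c}2\xrightarrow{d}4$ and $I=\langle ad,cb\rangle$, where $abcd$ is the unique maximal path). Your auxiliary claim is therefore doing the substantive work that the paper's one-line proof elides. The cost is the extra paragraph establishing the no-repeated-arrow property; your descent argument there is clean and uses exactly the right ingredients, namely SAG condition~(2) to propagate the repetition to the end of $m$ and SAG condition~(4) to certify that the extended path $m\cdot d$ lies outside $I$.
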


\begin{proof}[Proof]
	If the three paths $u, v, w$ are pairwise different, they differ (pairwise) in at least one arrow, so the same is true for the maximal paths $\widetilde{u}, \widetilde{v}, \widetilde{w}$. Thus $\widetilde{u}, \widetilde{v}, \widetilde{w}$ are pairwise different. Now, since $t(u)=t(v)=t(w)$ and $A$ is a SAG algebra, then at least two of them must share the last arrow, but this contradicts Lemma \ref{lem-3.1} and Corollary \ref{cor3.3}.
\end{proof}

\begin{remark}\label{Rem-involution-SAG}
	In the case of SAG algebras, by definition we can have, locally, the following two situations (which, clearly, cannot occur if the algebra is gentle):
	\begin{enumerate}
		\item $\xymatrix{\cdot\ar[r]^{a}&\cdot \ar@<0.7ex>[r]^{b} \ar@<-0.5ex>[r]_c &\cdot}$ with $ab=0$ and $ac=0$.
		\item $\xymatrix{\cdot\ar@<0.7ex>[r]^{a} \ar@<-0.5ex>[r]_b &\cdot \ar[r]^c&\cdot}$ with $ac=0$ and $bc=0$.
	\end{enumerate}
	In the first situation, we will have three maximal paths that intersect at vertex $t(a)=s(b)=s(c)$ with three sub-paths of these maximal paths, two of them trivial paths,  ending at this vertex, namely: $e_{s(b)}, e_{s(c)}$ and another path of the form $u=u_1\cdots u_ra$, for some arrows $u_1,\dots, u_r$. Thus, in order to have a well defined involution in this case, we stipulate that $\sigma\left(\left[e_{s(b)},j\right]\right)=\left[e_{s(c)},j\right]$, and $\sigma\left(\left[u,j\right]\right)=\left[u,j\right]$, for all $j\in\mathbb{Z}$.
	
	In the second situation, there are three sub-paths of three maximal paths, respectively, ending at vertex $t(a)=t(b)=s(c)$, namely: $e_{s(c)}$ and two paths of the form $u=u_1\cdots u_ra$ and $v=v_1\cdots v_lb$, for some arrows $u_1,\dots, u_r, v_1,\dots, v_l$. In this case, we define the involution as $\sigma\left(\left[u ,j\right]\right)=\left[v, j\right]$ and $\sigma\left(\left[e_{s(c)}, j\right]\right)=\left[e_{s(c)}, j\right]$, for all $j\in\mathbb{Z}$.
\end{remark}

Now, for each complex $P^{\bullet}\in\mathfrak{p}(A)$ we must define $F(P^{\bullet})\in\mathit{s}(\mathcal{Y}(A),k)$. In this case, recall that the  nonzero blocks of $F(P^{\bullet})$ appear only at places $\left([u,j], [v,j+1]\right)$ corresponding to nontrivial paths  $w$ determined by the pair $(u,v)$ by means of the relations $$\widetilde{w}=uw\bar{w}, \ \ v=uw.$$

Thus, we define these nonzero blocks as $$F(P^{\bullet})_{[u,j]}^{[v,j+1]}:=A_{w,j}$$

We observe that the blocks outside the places  $(j,j+1)$ (that is, the diagonal  above the main diagonal) are zero blocks. Besides, the condition that all products $\partial^j\partial^{j+1}$ are equal to zero is translated as the requirement that all products of consecutive blocks are equal to zero or, equivalently, that the big matrix has square equal to zero.\\

Now, a morphism $\varphi^{\bullet}:P^{\bullet}\longrightarrow \widetilde{P}^{\bullet}$ in $\mathfrak{p}(A)$ is, at each place $j$, a homomorphism from the projective $P^j$ to the projective $\widetilde{P}^j$, consequently, it is a block matrix between direct sums of indecomposables.

$$
\xymatrix{P^\bullet\ar[d]_{\varphi^\bullet}:&\cdots\ar[r]&P^j\ar[d]_{\varphi_j}\ar[r]^{\partial^j}&P^{j+1}\ar[d]^{\varphi_{j+1}}\ar[r]&\cdots\\
	\widetilde{P}^\bullet:                 & \cdots\ar[r]&\widetilde{P}^j\ar[r]_{\widetilde{\partial}^j}&\widetilde{P}^{j+1}\ar[r]&\cdots}
$$

Thus, as we did with the differentials, if we denote by $\phi_{w,j}$ the blocks in $\varphi_j$, then we obtain from the condition $\varphi_j\widetilde{\partial}^j=\partial^j\varphi_{j+1}$, under the notations above, that:

\begin{equation}\label{Eq-F-of-a-morphism}
\sum_{w_1\in\textbf{Pa}, w_2\in\textbf{Pa}_{\geq 1} : w=w_1w_2}p(w)\phi_{w_1,j}\widetilde{A}_{w_2,j}=\sum_{w_3\in\textbf{Pa}_{\geq 1}, w_4\in\textbf{Pa} : w'=w_3w_4}p(w')A_{w_3,j}\phi_{w_4,j+1}
\end{equation}

Finally, the functor $F$ will be defined in morphisms as the matrix associated to $\varphi^\bullet$. Summarizing, we have:

\begin{definition}\label{def-functor}
	Let $F:\mathfrak{p}(A)\longrightarrow s(\mathcal{Y}(A), k)$ be the functor defined as follows:
	
	In objects, the correspondence is given by
	$$F(P^{\bullet})_{[u,i]}^{[v,j]}=\left\{ \begin{array}{ll}
	A_{w,i} \ ,  &  \text{if} \ \ j=i+1, \ v=uw, \ w\in\textbf{Pa}_{\geq 1}, \\
	0,     &  \text{otherwise.}   
	\end{array}
	\right.$$
	
	where the block $F(P^{\bullet})_{[u,i]}$ (resp. $F(P^{\bullet})^{[u,i]}$) has $d_{t(u),i}$ rows (resp. columns).
	
	In morphisms, the correspondence is
	
	$$F(\varphi^{\bullet})_{[u,i]}^{[v,j]}=\left\{ \begin{array}{ll}
	\phi_{w,i} \ ,  &  \text{if} \ \ i=j, \ v=uw, \ \widetilde{u}=\widetilde{v}, \ \text{and} \  w\in\textbf{Pa}, \\
	0,     &  \text{otherwise.}   
	\end{array}
	\right.$$
	This gives a morphism of $s(\mathcal{Y}(A),k)$.
\end{definition}


%
We have an analogous to Lemma 4 in \cite{Be-Me}, for the case of SAG algebras, as follows.

Let $\mathcal{U}$ be full subcategory of  $s(\mathcal{Y}(A), k)$ defined by  the objects of $\text{Im} F$. Then we have the following lemma, where, as usual, we use the symbol $\text{ind}_0$ to denote the set of indecomposables of the Krull-Schmidt category. The proof is similar to the one given in Lemma 4 of \cite{Be-Me}.

\begin{lemma}\label{Lemma4-SAG}
	\begin{enumerate}
		\item $\ker F=0$, where $\ker F$ is defined by the complexes $P^{\bullet}\in\mathfrak{p}(A)$ such that $F(P^{\bullet}) = 0$.
		\item If $X\cong Y$ in $\text{Im} F$, then $X\cong Y$ in $\mathcal{U}$.
	\end{enumerate}

\end{lemma}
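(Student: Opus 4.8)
The plan is to mirror the argument of Lemma 4 in \cite{Be-Me}, using the combinatorial properties of SAG algebras that were set up in Lemma \ref{lem-3.1}, Corollary \ref{cor3.3}, Proposition \ref{prop-3.4} and Remark \ref{Rem-involution-SAG}. These are precisely the statements that replace the gentle-algebra bookkeeping from \cite{Be-Me}, so the overall structure of the proof should carry over with only local modifications.

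For part (1), I would argue as follows. Suppose $P^{\bullet}\in\mathfrak{p}(A)$ with $F(P^{\bullet})=0$; I want to show $P^{\bullet}=0$. If some $P^j\neq 0$, pick the largest index $j$ with $P^j\neq 0$ having a nonzero indecomposable summand $P_s$ — actually it is cleaner to pick the \emph{largest} $j$ with $P^j\neq 0$ and look at the differential $\partial^{j-1}:P^{j-1}\to P^j$. Since $P^{\bullet}\in\mathfrak{p}(A)$, the image of $\partial^{j-1}$ lies in the radical, so $\partial^{j-1}=\sum_{w\in\mathbf{Pa}_{\geq 1}}p(w)A_{w,j-1}$; the hypothesis $F(P^{\bullet})=0$ forces every block $A_{w,j-1}=0$, hence $\partial^{j-1}=0$. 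Running the same argument at the other differentials incident to $P^j$ (here $\partial^j=0$ already since $P^{j+1}=0$) shows $P^j$ is a direct summand isomorphic to a complex concentrated in a single degree with zero differentials. But such a complex is a nonzero summand of $P^{\bullet}$ in $\mathfrak{p}(A)$ only if... — actually the correct conclusion is simply that \emph{all} differentials vanish, and then one observes that a nonzero complex in $\mathfrak{p}(A)$ with all differentials zero has all its differentials equal to the zero map into the radical, which is fine, so this needs one more step: a complex with zero differentials and $F=0$ is itself zero because $F$ records, via the blocks $d_{t(u),i}$ being the number of rows/columns, the dimensions $d_{i,j}$ of the projectives themselves. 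That is, the block sizes of $F(P^{\bullet})$ at index $j$ record the multiplicities $d_{i,j}$, so $F(P^{\bullet})=0$ (all blocks empty) forces $d_{i,j}=0$ for all $i,j$, i.e. $P^{\bullet}=0$. This last observation is really the heart of part (1) and is immediate from Definition \ref{def-functor}.

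For part (2), suppose $X=F(P^{\bullet})$ and $Y=F(\widetilde P^{\bullet})$ are isomorphic in $\mathrm{Im}\,F$, say via an isomorphism $g:X\to Y$ which, a priori, is only a morphism in $\mathrm{Im}\,F$ with an inverse there. I want to upgrade $g$ to an isomorphism in the ambient category $\mathcal{U}$ (equivalently, $s(\mathcal{Y}(A),k)$); since $\mathcal{U}$ is the full subcategory on the objects of $\mathrm{Im}\,F$, what must be checked is that $g$ and $g^{-1}$, viewed as block matrices $T=(T_u^v)$, actually satisfy the four axioms for morphisms in $s(\mathcal{Y}(A),k)$ — triangularity ($v<u\Rightarrow T_u^v=0$), compatibility of partitions, the intertwining relation $BT=CT$, and the involution condition $\sigma(u)=v\Rightarrow T_u^u=T_v^v$. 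The point is that a morphism in $\mathrm{Im}\,F$ is by definition of the form $F(\varphi^{\bullet})$ for some $\varphi^{\bullet}:P^{\bullet}\to\widetilde P^{\bullet}$ in $\mathfrak{p}(A)$, and Definition \ref{def-functor} together with \eqref{Eq-F-of-a-morphism} shows $F(\varphi^{\bullet})$ automatically satisfies all four axioms: triangularity because $F(\varphi^{\bullet})_{[u,i]}^{[v,j]}$ is zero unless $i=j$ and $\widetilde u=\widetilde v$ and $l(v)\ge l(u)$, the involution condition by the way $\sigma$ was defined (Remark \ref{Rem-involution-SAG}) combined with Proposition \ref{prop-3.4} which guarantees the paths ending at a common vertex are controlled, and $BT=CT$ is exactly the content of \eqref{Eq-F-of-a-morphism}. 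Finally, if $F(\varphi^{\bullet})$ is invertible \emph{in $\mathrm{Im}\,F$}, its diagonal blocks $\phi_{e_r,i}$ (the "length-zero" blocks) must be invertible matrices — this is where one uses that triangularity makes invertibility in the category equivalent to invertibility of the diagonal blocks — and then the same diagonal blocks witness that $F(\varphi^{\bullet})$ is invertible in $s(\mathcal{Y}(A),k)$, hence in $\mathcal{U}$.

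The main obstacle, and the place where the SAG hypothesis genuinely does work beyond the gentle case, is the verification of the involution axiom $\sigma(u)=v\Rightarrow T_u^u=T_v^v$ for $T=F(\varphi^{\bullet})$: in a SAG algebra three maximal paths can meet at a vertex (the two local configurations of Remark \ref{Rem-involution-SAG}), so one must check that the ad hoc definition of $\sigma$ in those configurations — pairing the two trivial paths $e_{s(b)},e_{s(c)}$ in the first case and the two nontrivial paths $u,v$ in the second — is compatible with the diagonal blocks of $F(\varphi^{\bullet})$, i.e. that $\phi_{e_{s(b)},j}$ and $\phi_{e_{s(c)},j}$ (resp.\ $\phi_{e_{t(u)},j}$ for the two relevant paths) are forced to coincide. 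This follows because a homomorphism of projectives $\varphi_j$ decomposes into blocks indexed by pairs of indecomposable summands, and $[e_{s(b)},j]\cong_\sigma[e_{s(c)},j]$ precisely means these correspond to the \emph{same} indecomposable projective $P_{s(b)}=P_{s(c)}$, so the two diagonal blocks are literally the same block of $\varphi_j$; Proposition \ref{prop-3.4} and Corollary \ref{cor3.3} are what guarantee there is no third competing block to worry about. Once this compatibility is nailed down, the rest of part (2) is the same routine diagram chase as in \cite{Be-Me}, which I would not reproduce in detail.
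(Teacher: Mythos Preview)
Your proposal is correct and follows the same approach as the paper, which in fact gives no detailed argument at all and simply states that the proof is similar to Lemma~4 of \cite{Be-Me}. Your sketch supplies exactly the details one would expect from that reference: for part~(1) the key observation is that the block sizes of $F(P^{\bullet})$ record the multiplicities $d_{i,j}$, so $F(P^{\bullet})=0$ forces $P^{\bullet}=0$; for part~(2) one checks that $F(\varphi^{\bullet})$ satisfies the four axioms for a morphism in $s(\mathcal{Y}(A),k)$, the only SAG-specific issue being the involution axiom, which you correctly resolve using the fact that $\sigma$-related indices correspond to the same indecomposable projective (whence the diagonal blocks coincide), with Proposition~\ref{prop-3.4} and Remark~\ref{Rem-involution-SAG} handling the three-paths-at-a-vertex configurations.

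One minor simplification: once you have verified that $F(\varphi^{\bullet})$ is a morphism in $s(\mathcal{Y}(A),k)$ and that $F$ is a functor (so that $F(\mathrm{id})=\mathrm{id}$ and $F$ preserves composition), an isomorphism in $\mathrm{Im}\,F$ is automatically an isomorphism in $s(\mathcal{Y}(A),k)$, hence in $\mathcal{U}$; the separate argument about invertibility of diagonal blocks is then not strictly needed. Your initial detour in part~(1) via the largest nonzero degree is also unnecessary---the block-size observation you eventually make suffices on its own.
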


As a consequence of this lemma, we have the following result.

\begin{corollary}\label{Cor-Lemma4-SAG}
	$\text{ind}_0 \ \mathcal{U}\subseteq  \text{ind}_0 \ \text{Im}  F$.
\end{corollary}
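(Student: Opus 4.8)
=== PROOF PROPOSAL ===

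The plan is to deduce Corollary \ref{Cor-Lemma4-SAG} directly from Lemma \ref{Lemma4-SAG}, essentially by the same general-nonsense argument used for the analogous statement in \cite{Be-Me}. First I would recall the setup: $\mathcal{U}$ is the full subcategory of $s(\mathcal{Y}(A),k)$ whose objects are exactly those of $\operatorname{Im} F$; thus $\mathcal{U}$ and $\operatorname{Im} F$ have the same objects, and the only difference between them is that $\mathcal{U}$ inherits \emph{all} morphisms of $s(\mathcal{Y}(A),k)$ between such objects, whereas $\operatorname{Im} F$ keeps only those morphisms that are in the image of $F$. Since $\operatorname{Im} F$ is a (Krull--Schmidt) additive subcategory of $\mathcal{U}$ with the same objects, every object that is indecomposable in $\mathcal{U}$ is \emph{a fortiori} indecomposable in $\operatorname{Im} F$ --- indecomposability is the nonexistence of a nontrivial direct sum decomposition, and passing to a subcategory with the same objects (hence the same biproducts available) but fewer morphisms can only destroy, never create, decompositions. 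This gives the containment of the underlying \emph{objects}.

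The subtler point, and the one I expect to be the main obstacle, is that $\operatorname{ind}_0$ refers to the object set of a chosen spectroid (a skeleton of the subcategory of indecomposables), so the statement is really about isomorphism classes: I must check that if $X \cong Y$ in $\mathcal{U}$ then $X \cong Y$ in $\operatorname{Im} F$, so that the isomorphism classes in $\operatorname{Im} F$ refine correctly and a spectroid of $\operatorname{Im} F$ can be chosen inside a spectroid of $\mathcal{U}$. This is exactly part (2) of Lemma \ref{Lemma4-SAG}, which I would invoke: since both $X$ and $Y$ lie in $\operatorname{Im} F$, an isomorphism in $\mathcal{U}$ (equivalently in $s(\mathcal{Y}(A),k)$) forces an isomorphism already in $\operatorname{Im} F$. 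Combining this with the previous paragraph, an object indecomposable in $\mathcal{U}$ is indecomposable in $\operatorname{Im} F$ and its $\mathcal{U}$-isomorphism class agrees with its $\operatorname{Im} F$-isomorphism class, so we may arrange $\operatorname{ind} \operatorname{Im} F$ to contain $\operatorname{ind} \mathcal{U}$ on the level of representatives, yielding $\operatorname{ind}_0 \mathcal{U} \subseteq \operatorname{ind}_0 \operatorname{Im} F$.

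Finally I would note that part (1) of the lemma ($\ker F = 0$) is what guarantees that $\operatorname{Im} F$ (and hence $\mathcal{U}$) is itself Krull--Schmidt with a well-behaved spectroid, so that $\operatorname{ind}_0$ is meaningful for both categories and the inclusion statement is not vacuous; $F$ being faithful on objects in this sense lets us transport decompositions back and forth between $\mathfrak{p}(A)$ and $\operatorname{Im} F$ when needed. The whole argument is short: the content is entirely in Lemma \ref{Lemma4-SAG}, and the corollary is the formal observation that ``fewer morphisms, same objects, plus isomorphisms already realized'' forces the indecomposables of the larger-morphism category to sit among those of the smaller one. I would present it in two or three sentences: first the object-level indecomposability implication, then the isomorphism-class identification via Lemma \ref{Lemma4-SAG}(2), then the conclusion about spectroids.
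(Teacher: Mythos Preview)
Your first paragraph is exactly the paper's argument: an object indecomposable in $\mathcal{U}$ is indecomposable in $\operatorname{Im} F$ because any decomposition in $\operatorname{Im} F$ is \emph{a fortiori} a decomposition in $\mathcal{U}$. The paper phrases this contrapositively as ``$F(P^\bullet)\cong F(Q^\bullet)\oplus F(R^\bullet)$ in $\operatorname{Im} F$ implies the same in $\mathcal{U}$'', citing Lemma~\ref{Lemma4-SAG}(2), and then uses indecomposability in $\mathcal{U}$ to force one summand to vanish.

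Your second paragraph, however, misreads the direction of Lemma~\ref{Lemma4-SAG}(2): the lemma asserts the trivial implication ``isomorphic in $\operatorname{Im} F$ $\Rightarrow$ isomorphic in $\mathcal{U}$'', not the converse you claim to invoke. Indeed, the example immediately following the corollary is there precisely to show that $\operatorname{Hom}_{\operatorname{Im} F}\subsetneq \operatorname{Hom}_{\mathcal{U}}$ in general for SAG algebras, so the reverse implication is not available. Fortunately your spectroid concern is unnecessary anyway: once every $\mathcal{U}$-indecomposable is $\operatorname{Im} F$-indecomposable, and since $\mathcal{U}$-isomorphism classes are unions of $\operatorname{Im} F$-isomorphism classes (the trivial direction), one can choose a spectroid of $\mathcal{U}$ inside any chosen spectroid of $\operatorname{Im} F$ without further input. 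So your first paragraph alone suffices and matches the paper; just drop the appeal to the nonexistent direction of the lemma.
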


\begin{proof}
	First, recall that the categories $\mathcal{U}$ and $\text{Im}F$  have the same objects. Now, let $F(P^\bullet)$ an indecomposable object in $\mathcal{U}$. We must show that $F(P^\bullet)$ is indecomposable in $\text{Im}F$. If there exist complexes $Q^\bullet, R^\bullet\in\mathfrak{p}(A)$ such that  $F(P^\bullet)\cong F(Q^\bullet)\oplus F(R^\bullet)$ in $\text{Im}F$, then by Lemma \ref{Lemma4-SAG}, part 2, we have that $F(P^\bullet)\cong F(Q^\bullet)\oplus F(R^\bullet)$ in $\mathcal{U}$. But $F(P^\bullet)$ is indecomposable in $\mathcal{U}$ and hence $F(Q^\bullet)=0$ or $F(R^\bullet)=0$. This shows that $F(P^\bullet)$ is indecomposable in $\text{Im}F$.
\end{proof}
For SAG algebras, the inclusion given in Corollary \ref{Cor-Lemma4-SAG} is a proper inclusion as it is shown in the following example.
\begin{example}\label{Ex-counter-example-to-lemma4}
	Let us consider the bound quiver $(Q, I)$ given by 
	$$Q: \ \xymatrix{1 \ar[r]^{a}&2\ar@<0.7ex>[r]^{b} \ar@<-0.5ex>[r]_c &3}$$ and $I=\langle ab, ac\rangle$. Then $A=kQ/I$ is a SAG algebra, which is not gentle.
	
	Consider the complex 
	$$P^\bullet: \ \xymatrix{0\ar[r]&P_1\ar[r]^{\partial^1}&P_2\ar[r]^(0.4){\partial^2}&P_3\oplus P_3\ar[r]&0}$$ 
	where $\partial^1=p(a)$ and $\partial^2=\begin{pmatrix}
	p(b) & p(c)
	\end{pmatrix}$
	
	Let $\varphi^\bullet\in\text{Hom}_{\mathfrak{p}(A)}(P^\bullet, P^\bullet)$.
	
	$$\xymatrix{P^\bullet\ar[d]_{\varphi^\bullet}:&0\ar[r]&P_1\ar[d]_{\varphi_1}\ar[r]^{\partial^1}&P_2\ar[d]_{\varphi_2}\ar[r]^(0.4){\partial^2}&P_3\oplus P_3\ar[d]_{\varphi_3}\ar[r]&0\\
		P^\bullet:                                 &0\ar[r]&P_1\ar[r]_{\partial^1}                          &P_2\ar[r]_(0.4){\partial^2}                           &P_3\oplus P_3\ar[r]                          &0}
	$$ 
	Then we have that $\varphi^\bullet=(\varphi_1, \varphi_2, \varphi_3)$, where $\varphi_1=p(e_1)\begin{pmatrix}\lambda	\end{pmatrix}$, $\varphi_2=p(e_2)\begin{pmatrix}\alpha\end{pmatrix}$ and $\varphi_3=p(e_3)\begin{pmatrix}
	\beta_1 & \beta_2\\ \beta_3 & \beta_4
	\end{pmatrix}$, with $\lambda, \alpha, \beta_1, \beta_2, \beta_3, \beta_4\in k$.
	
	From $\varphi_1\partial^1=\partial^1\varphi_2$, it follows that $\lambda p(a)=\alpha p(a)$, and hence $\alpha=\lambda$. Also, from $\varphi_2\partial^2=\partial^2\varphi_3$ we obtain
	\begin{eqnarray*}
		\lambda p(b) & = & \beta_1 p(b)+\beta_3 p(c) \\
		\lambda p(c) & = & \beta_2 p(b)+ \beta_4 p(c)
	\end{eqnarray*}
	Therefore, $\beta_1=\lambda$, $\beta_3=0$, $\beta_2=0$ and $\beta_4=\lambda$. Hence, we have $\varphi_1=p(e_1)\begin{pmatrix}\lambda	\end{pmatrix}$, $\varphi_2=p(e_2)\begin{pmatrix}\lambda\end{pmatrix}$ and $\varphi_3=p(e_3)\begin{pmatrix}
	\lambda & 0\\ 0 & \lambda
	\end{pmatrix}$. This shows that $\text{End}_{\mathfrak{p}(A)}(P^\bullet)\cong k$ and so $P^\bullet$ is an indecomposable object in $\mathfrak{p}(A)$. 
	
	However, a direct calculation shows that it does not exist a generalized string or a generalized band $w$ in the bound quiver $(Q, I)$ such that $P_w^\bullet\cong P^\bullet$. This means that the complex $P^\bullet$ is an indecomposable object, which is not a string complex or a band complex (see subsection \ref{subsec:string and band complexes}). Therefore, in the case of SAG algebras, there are other than string and band complexes which are indecomposable objects in $D^b(A)$. This is a fundamental difference with the case of gentle algebras.\\
	
	Now, let us determine $F(P^\bullet)$ and $F(\varphi^\bullet)$ in order to illustrate how the functor works in this case. 
	
	In this example, we have that $\textbf{Pa}_{\geq 1}=\{a, b, c\}$ and $\textbf{M}=\{a<b<c\}$. Also, the poset $\mathcal{Y}(A)$ is given by $$\mathcal{Y}(A)=\{e_{s(a)}<a<e_{s(b)}<b<e_{s(c)}<c\}\times\mathbb{Z}$$ and the involution $\sigma$ is, for all $j\in\mathbb{Z}$,
	$$\sigma\left(\left[e_{s(a)},j\right]\right)=\left[e_{s(a)},j\right], \quad \sigma\left(\left[a,j\right]\right)=\left[a, j\right],$$
	$$\sigma\left(\left[e_{s(b)},j\right]\right)=\left[e_{s(c)},j\right], \quad \sigma\left(\left[b,j\right]\right)=\left[c, j\right].$$
	
	The differential maps are given  by $$\partial^j=p(a)A_{a,j}+p(b)A_{b,j}+p(c)A_{c,j}$$ and hence $$\partial^1=p(a)\begin{pmatrix} 1 \end{pmatrix}+p(b)\emptyset+p(c)\emptyset\quad \text{and}\quad \partial^2=p(a)\emptyset+p(b)\begin{pmatrix}1&0\end{pmatrix}+p(c)\begin{pmatrix}0&1\end{pmatrix}$$
	
	where $\emptyset$  indicates that this block matrix is empty. Then we have 
	\begin{center}
		\begin{tabular}{|c|c|}
			\hline 
			$F(P^\bullet)_{[u,i]}$ (resp. $F(P^\bullet)^{[u,i]}$)& Number of rows (resp. columns)    \\ 
			\hline
			\hline
			$[u,i]$ for $i\neq1,2,3$                                         & $d_{t(u),i}=0$                                \\
			\hline 
			$[e_{s(a)},1]$                                                       & $d_{1,1}=1$                                    \\ 
			\hline 
			$[a,1]$                                                                 & $d_{2,1}=0$                                    \\ 
			\hline 
			$[e_{s(b)},1]$                                                       & $d_{2,1}=0$                                   \\ 
			\hline 
			$[b,1]$                                                                & $d_{3,1}=0$                                   \\ 
			\hline 
			$[e_{s(c)},1]$                                                        & $d_{2,1}=0$                                   \\ 
			\hline 
			$[c,1]$                                                                & $d_{3,1}=0$                                   \\ 
			\hline 
			$[e_{s(a)},2]$                                                       & $d_{1,2}=0$                                    \\ 
			\hline 
			$[a,2]$                                                                 & $d_{2,2}=1$                                    \\ 
			\hline 
			$[e_{s(b)},2]$                                                       & $d_{2,2}=1$                                   \\ 
			\hline 
			$[b,2]$                                                                & $d_{3,2}=0$                                   \\ 
			\hline 
			$[e_{s(c)},2]$                                                        & $d_{2,2}=1$                                   \\ 
			\hline 
			$[c,2]$                                                                & $d_{3,2}=0$                                   \\ 
			\hline 
			$[e_{s(a)},3]$                                                       & $d_{1,3}=0$                                    \\ 
			\hline 
			$[a,3]$                                                                 & $d_{2,3}=0$                                    \\ 
			\hline 
			$[e_{s(b)},3]$                                                       & $d_{2,3}=0$                                   \\ 
			\hline 
			$[b,3]$                                                                & $d_{3,3}=2$                                   \\ 
			\hline 
			$[e_{s(c)},3]$                                                        & $d_{2,3}=0$                                   \\ 
			\hline 
			$[c,3]$                                                                & $d_{3,3}=2$                                   \\ 
			\hline 
		\end{tabular} 
	\end{center}
	
	and 
	$$
	\begin{array}{cc}
	F(P^\bullet)_{[e_{s(a)},1]}^{[a,2]}=A_{a,1}=\begin{pmatrix} 1\end{pmatrix},&  F(P^\bullet)_{[e_{s(b)},2]}^{[b,3]}=A_{b,2}=\begin{pmatrix} 1&0\end{pmatrix}, \\ F(P^\bullet)_{[e_{s(c)},2]}^{[c,3]}=A_{c,2}=\begin{pmatrix}0 & 1\end{pmatrix},    &   
	\end{array}
	$$ 	
	
	and the other cases for $F(P^\bullet)_{[u,i]}^{[v,j]}$ are zero blocks or empty blocks. Thus, we get the following block matrix $F(P^\bullet)=\begin{pmatrix}
	F(P^\bullet)_{[u,i]}^{[v,j]}
	\end{pmatrix}$
	\begin{center}
		\begin{tabular}{c||c|c|c|c|cc|cc|}
			& $[e_{s(a)},1]$ & $[a,2]$ & $[e_{s(b)},2]$ & $[e_{s(c)},2]$ & $[b,3]$ & $[b,3]$ & $[c,3]$ & $[c,3]$ \\
			\hline
			\hline
			$[e_{s(a)},1]$  & 0            &  1         & 0           &    0           & 0             & 0          & 0             &  0            \\
			\hline
			$[a,2]$           & 0            &  0         & 0           &    0           & 0             & 0          & 0             &  0             \\
			\hline
			$[e_{s(b)},2]$ & 0            &  0         & 0           &    0           & 1             & 0          & 0             &  0            \\
			\hline                                                                                                                                                      
			$[e_{s(c)},2]$   & 0            &  0         & 0           &    0           & 0             & 0          & 0             &  1            \\
			\hline
			$[b,3]$            & 0            &  0         & 0           &    0           & 0             & 0          & 0             &  0            \\
			$[b,3]$            & 0            &  0         & 0           &    0           & 0             & 0          & 0             &  0            \\
			\hline
			$[c,3]$             & 0            &  0         & 0           &    0           & 0             & 0          & 0             &  0            \\
			$[c,3]$             & 0            &  0         & 0           &    0           & 0             & 0          & 0             &  0            \\
			\hline
		\end{tabular}
	\end{center}
	
	It is easy to verify that $\left(F(P^\bullet)\right)^2=0$ and thus $F(P^\bullet)\in s(\mathcal{Y}(A), k)$. The other conditions are satisfied by construction.

	Now, for each $j\in\mathbb{Z}$, we have that 
	$$\varphi_j=p(e_1)\phi_{e_1,j}+p(e_2)\phi_{e_2,j}+p(e_3)\phi_{e_3,j}+p(a)\phi_{a,j}+p(b)\phi_{b,j}+p(c)\phi_{c,j}.$$
	Since $\varphi_1=p(e_1)\begin{pmatrix}\lambda	\end{pmatrix}$, $\varphi_2=p(e_2)\begin{pmatrix}\lambda\end{pmatrix}$ and $\varphi_3=p(e_3)\begin{pmatrix}
	\lambda & 0\\ 0 & \lambda
	\end{pmatrix}$, then $\phi_{e_1,1}=\begin{pmatrix}\lambda	\end{pmatrix}$, $\phi_{e_2,2}=\begin{pmatrix}\lambda	\end{pmatrix}$ and $\phi_{e_3,3}=\begin{pmatrix}
	\lambda & 0\\ 0 & \lambda
	\end{pmatrix}$ are the only nonempty blocks. 
	
	Thus, using a similar process to the one we used to calculate the block matrix $F(P^\bullet)$, we obtain for $F(\varphi^\bullet)=\left(F(\varphi^\bullet)_{[u,i]}^{[v,j]}\right)$ the following block matrix
	
	\begin{center}
		\begin{tabular}{c||c|c|c|c|cc|cc|}
			& $[e_{s(a)},1]$ & $[a,2]$ & $[e_{s(b)},2]$ & $[e_{s(c)},2]$ & $[b,3]$ & $[b,3]$ & $[c,3]$ & $[c,3]$ \\
			\hline
			\hline
			$[e_{s(a)},1]$  & $\lambda$  &  0                        & 0               &    0               & 0                & 0              & 0               &  0              \\
			\hline
			$[a,2]$           & 0                 &  $\lambda$         & 0               &    0               & 0                & 0              & 0               &  0              \\
			\hline
			$[e_{s(b)},2]$ & 0                 &  0                       & $\lambda$ &    0               & 0                & 0              & 0               &  0              \\
			\hline                                                                                                                                                      
			$[e_{s(c)},2]$   & 0               &  0                       & 0               &    $\lambda$ & 0               & 0               & 0               &  0                \\
			\hline
			$[b,3]$            & 0                &  0                       & 0               &    0               & $\lambda$ & 0               & 0               &  0               \\
			$[b,3]$            & 0                &  0                       & 0               &    0               & 0               & $\lambda$ & 0               &  0               \\
			\hline
			$[c,3]$             & 0               &  0                       & 0               &    0               & 0               & 0               & $\lambda$ &  0               \\
			$[c,3]$             & 0               &  0                       & 0               &    0               & 0               & 0               & 0               &  $\lambda$  \\
			\hline
		\end{tabular}
	\end{center}
	\vspace{2mm}
	This shows that $\text{Hom}_{\text{Im}F}(F(P^\bullet), F(P^\bullet))\cong k$.\\
	
	Now, let $B=F(P^\bullet)$ and let $T\in\text{Hom}_{\mathcal{U}}(F(P^\bullet), F(P^\bullet))$ an element of the form 
	$$T=
	\left(
	\begin{array}{c|c|c|c|cc|cc}
	a_{11} & 0         & 0        & 0        & 0         & 0         & 0         & 0        \\
	\hline
	0       & a_{22} & 0         & 0        & 0         & 0         & 0         & 0        \\
	\hline
	0       & 0         & a_{33} & 0        & 0         & 0         & 0         & 0        \\
	\hline
	0       & 0         & 0        & a_{44} & 0         & 0         & 0         & 0        \\
	\hline
	0       & 0         & 0        & 0         & a_{55} & a_{56} & 0         & 0        \\
	0       & 0         & 0        & 0         & a_{65} & a_{66} & 0         & 0        \\
	\hline
	0       & 0         & 0        & 0         & 0         & 0         & a_{77} & a_{78} \\
	0       & 0         & 0        & 0         & 0         & 0         & a_{87} & a_{88} \\
	\end{array}
	\right)
	$$
	
	From condition (4) of the definition of morphisms in the Bondarenko's Category (see section \ref{Sec-Preliminaries}), we get $a_{33}=a_{44}$ and $\begin{pmatrix}
	a_{55} & a_{56}\\a_{65} & a_{66}
	\end{pmatrix}=\begin{pmatrix}
	a_{77} & a_{78}\\a_{87} & a_{88}
	\end{pmatrix}$. Also, from the condition $BT=TB$, we obtain $a_{11}=a_{22}:=a$, $a_{33}=a_{55}=a_{66}:=b$, and  $a_{56}=a_{65}=0$. Thus,
	
	$$T=
	\left(
	\begin{array}{c|c|c|c|cc|cc}
	a & 0 & 0 & 0 & 0 & 0 & 0 & 0 \\
	\hline
	0 & a & 0 & 0 & 0 & 0 & 0 & 0 \\
	\hline
	0 & 0 & b & 0 & 0 & 0 & 0 & 0 \\
	\hline
	0 & 0 & 0 & b & 0 & 0 & 0 & 0 \\
	\hline
	0 & 0 & 0 & 0 & b & 0 & 0 & 0 \\
	0 & 0 & 0 & 0 & 0 & b & 0 & 0 \\
	\hline
	0 & 0 & 0 & 0 & 0 & 0 & b & 0 \\
	0 & 0 & 0 & 0 & 0 & 0 & 0 & b \\
	\end{array}
	\right)
	$$
	with $a, b\in k$. This means that in $\text{Hom}_{\mathcal{U}}(F(P^\bullet), F(P^\bullet))$ there are morphisms depending on more than one scalar parameter and hence $\text{Hom}_{\text{Im}F}(F(P^\bullet), F(P^\bullet))\subsetneq\text{Hom}_{\mathcal{U}}(F(P^\bullet), F(P^\bullet))$.
\end{example}

Lemma \ref{Lemma4-SAG} implies that there is a bijective correspondence between $\text{Isom}_{\mathfrak{p}(A)}(P^\bullet, Q^\bullet)$ and $\text{Isom}_{\text{Im}F}(F(P^\bullet), F(Q^\bullet))$. The previous example shows that $$\text{Isom}_{\text{Im}F}(F(P^\bullet), F(Q^\bullet))\subsetneq \text{Isom}_{\mathcal{U}}(F(P^\bullet), F(Q^\bullet)).$$

This occurs in the SAG case since there are more morphisms (with respect to the gentle case) due to the local configuration of $(Q, I)$, in which we have three sub-paths of maximal paths ending at the same vertex (see Remark \ref{Rem-involution-SAG}). This local configuration produces more independent blocks in the matrix corresponding to a morphism $\varphi^\bullet$, as in Example \ref{Ex-counter-example-to-lemma4}.

\subsection{String and band complexes}\label{subsec:string and band complexes}

In this subsection, we give a description of certain complexes of projectives, called \textit{string complexes} and \textit{band complexes}, which are associated to generalized strings and bands, respectively. These complexes were introduced by V. Bekkert and H. Merklen in \cite{Be-Me},  classifying with them all  indecomposable objects in $\mathfrak{p}(A)$ for the gentle case. As we shall see,  in the SAG case, they continue being an important family of indecomposable objects in the category  $\mathfrak{p}(A)$. 

Most of the material introduced in this subsection follows the definitions and notations given in \cite{Be-Me}. Our idea with this revision is to facilitate access to the reader to these topics from the SAG point of view.\\

Let $A=kQ/I$ be a SAG algebra. For each arrow $a\in Q_1$, we denote by $a^{-1}$  its \textit{formal inverse}, which verifies  $s(a^{-1})=t(a)$, $t(a^{-1})=s(a)$ and $(a^{-1})^{-1}=a$. More generally, if $p=a_1\cdots a_n$ is a path in $Q$, the \textit{inverse path} of $p$ is given by $p^{-1}=a_n^{-1}\cdots a_1^{-1}$.\\

Now, a \textit{walk} $w$ (resp. a \textit{generalized walk}) of length $n>0$ is a sequence $w_1\cdots w_n$, where each $w_i$ is either of the form $p$ or $p^{-1}$, where $p$ is an arrow (resp. a path of positive length) and such that $t(w_i)=s(w_{i+1})$ for $i=1,\dots, n-1$. It is clear that $s(w)=s(w_1)$ and $t(w)=t(w_n)$. The notion of the \textit{inverse} of a walk (resp. of a generalized walk) is defined analogously as the one for a path. Thus, the passage to inverses is an involutory transformation.

A \textit{closed walk} (resp. a \textit{closed generalized walk}) is a walk $w$ (resp. a generalized walk) such that $t(w)=s(w)$. In this case, we consider its \textit{rotations} (or \textit{cyclic permutations})  denoted by $w[j]$, where $w[j]=w_{j+1}\cdots w_nw_1\cdots w_j$, for $j=1,\dots,n-1$. 

The \textit{product} or \textit{concatenation} of two walks (resp. of two generalized walks) $w=w_1\cdots w_n$ and $w'=w_1'\cdots w_n'$ is defined as the walk (resp. generalized walk) $ww'=w_1\cdots w_nw_1'\cdots w_n'$, whenever $t(w_n)=s(w_1')$. 

We consider two equivalence relations on the set of generalized walks, denoted by $\cong_s$ and $\cong_r$, defined as follows. If $u$ and $w$ are two generalized walks, then $$u\cong_s w \quad \text{if and only if}\quad u=w \ \text{or} \ u=w^{-1}.$$

If $u$ and $w$ are two closed generalized walks, then $$u\cong_r w \quad \text{if and only if}\quad u=w[j]  \ \text{or} \ u=w^{-1}$$ for some $1\leq j\leq n$, where we interpret $w[n]$ as $w$.

\begin{definition}
	A \textit{string} is a walk $w=w_1\cdots w_n$ such that $w_{i+1}\neq w_i^{-1}$ for $1\leq i <n$ and such that no sub-word of $w$ or $w^{-1}$ is in $I$. The set of all strings in $(Q, I)$ will be denoted by $St$.
\end{definition}


Now, we denote by $\overline{GSt}$ the set of all generalized walks $w=w_1\cdot w_2\cdots w_n$ satisfying

\begin{itemize}
	\item If $w_i, w_{i+1} \in \textbf{Pa}_{>0}$, then $w_iw_{i+1}\in I$.
	\item If $w_i^{-1}, w_{i+1}^{-1}\in \textbf{Pa}_{>0}$, then $w_{i+1}^{-1}w_i^{-1}\in I$.
	\item If $w_i, w_{i+1}^{-1} \in \textbf{Pa}_{>0}$ or $w_i^{-1}, w_{i+1} \in \textbf{Pa}_{>0}$, then $w_iw_{i+1}\in St$. 
\end{itemize}

We use the notation $GSt$ for a fixed set of representatives of the quotient of $\overline{GSt}$ over the equivalence relation $\cong_s$ together with all trivial paths. The elements of $GSt$ are called \textit{generalized strings}.


\begin{remark}
    In order to distinguish  generalized strings from strings, we use a dot $\cdot$ between $w_i$ and $w_{i+1}$ in a generalized string. For instance, in the quiver  $Q: \ \xymatrix{2 \ar[r]^{a} & 1 & 3\ar[l]_{b}}$ we have that $ab^{-1}$ is a string and $a\cdot b^{-1}$ is a generalized string.
\end{remark}

Given a  generalized walk $w=w_1\cdots w_n$ we define the function $\mu_w: \{0,1,\dots, n\}\longrightarrow \mathbb{Z}$, by 
$$\mu_w(0):=0$$ and 
$$\mu_w(i):=\left\{ \begin{array}{ll}
\mu_w(i-1)+1 ,  &  \text{if} \ \ w_i\in\textbf{Pa}_{>0}, \\
\mu_w(i-1)-1 ,  &  \text{if} \ \ w_i^{-1}\in\textbf{Pa}_{>0}.   
\end{array}
\right.$$
Thus, we set $\mu(w):=\min_{0\leq i\leq n}\left\{\mu_w(i)\right\}$.

This allows us to define the set $\overline{GBa}$ of all closed generalized walks $w=w_1\cdots w_n$ (i.e. $t(w_n)=s(w_1)$) such that $w^2\in\overline{GSt}$, $\mu_w(n)=\mu_w(0)=0$ and $w$ is not a power of a shorter generalized walk. We use the notation $GBa$ for a fixed set of representatives of the quotient $\overline{GBa}/\cong_r$. The elements of $GBa$ are called \textit{generalized bands}.


\begin{remark}
	It is always possible to assume that $\mu_w(0)\leq \mu_w(n)$. In fact, if $\mu_w(0)\geq\mu_w(n)$, then $\mu_{w^{-1}}(0)\leq\mu_{w^{-1}}(n)$ and $w^{-1}\cong_s w$. 
\end{remark}

Now, for every generalized string (resp. every generalized band), we will associate a finite projective complex called a \textit{string complex} (resp. a \textit{band complex}).

\begin{definition}\label{def-string-complex}
	Let  $w=w_1\cdot w_2\cdots w_n$ be a generalized string. A \emph{string complex} $P_{w}^{\bullet}$ is a complex  $\xymatrix{\cdots P_w^i\ar[r]^{\partial_w^i}&P_w^{i+1}\cdots}$ defined as follows:
	
	\begin{enumerate}
		\item The modules are given by
		\begin{equation}\label{def-string-complex-modules}
		P_w^i=\displaystyle\bigoplus_{j=0}^n\delta(\mu_w(j),i)P_{c(j)}
		\end{equation}
		where $c(0)=s(w_1)$, $c(j)=t(w_j)$ for $j>0$ and $\delta$ is the Kronecker delta. 
		
		\item The differential maps are given by  $\partial_w^i=\left(\partial_{jk}^i\right)_{1\leq j,k\leq n}$, where
		\begin{equation}\label{def-string-complex-differentials}
		\partial_{jk}^i:=\left\{ \begin{array}{ll}
		p(w_{j+1}) ,  &  \text{if} \ \ w_{j+1}\in\textbf{Pa}_{>0}, \mu_w(j)=i \ \text{and} \ k=j+1, \\
		p(w_j^{-1}),  &  \text{if} \ \ w_j^{-1}\in\textbf{Pa}_{>0} , \mu_w(j)=i \ \text{and} \ k=j-1, \\
		0,                &  \text{otherwise}.   
		\end{array}
		\right.
		\end{equation}
		\item Also, for each trivial generalized string $e_i^{\pm 1}$, let us denote by $P_{e_i^{\pm} 1}^\bullet$, the following projective complex
		$$\xymatrix{\cdots\ar[r]&0\ar[r]&P^0=P_i\ar[r]^(0.7){\partial^0}&0\ar[r]&\cdots}$$
	\end{enumerate}

\end{definition}

Let $\text{Ind} \ k[x]$ be the set of all indecomposable polynomials, except $\{x^d \mid d\geq 1\}$, with coefficients in $k$. If $f\in \text{Ind} \ k[x] $, let us denote by $F_{f(x)}$ the corresponding Frobenius matrix.

\begin{definition}\label{def-band-complex}
	Let $w=w_1\cdot w_2\cdots w_n$ be a generalized band and let $f(x)\in \text{Ind} \ k[x]$. Then $P_{w,f}^\bullet$ is the projective complex $\xymatrix{\cdots P_{w,f}^i\ar[r]^{\partial_{w,f}^i}&P_{w,f}^{i+1}\cdots}$, called a \textit{band complex} and defined as follows. The modules are given by  $$P_{w,f}^i=\displaystyle\bigoplus_{j=0}^{n-1}\delta(\mu_w(j),i)P_{c(j)}^{\deg f}$$
	
	The differential maps are given by $\partial_{w,f}^i=\left(\partial_{jk}^i\right)_{1\leq j,k\leq n}$, where
	$$\partial_{jk}^i:=\left\{ \begin{array}{ll}
	p(w_{j+1})Id_{\deg f(x)} ,  &  \text{if} \ \ w_{j+1}\in\textbf{Pa}_{>0}, \mu_w(j)=i \ \text{and} \ k=j+1, \\
	p(w_j^{-1})Id_{\deg f(x)},  &  \text{if} \ \ w_j^{-1}\in\textbf{Pa}_{>0} , \mu_w(j)=i \ \text{and} \ k=j-1, \\
	p(w_n)F_{f(x)},                 & \text{if} \ \ w_n\in\textbf{Pa}_{>0}, \mu_w(j)=i, j=n-1 \ \text{and} \ k=0,\\ 
	p(w_n^{-1})F_{f(x)},          & \text{if} \ \ w_n^{-1}\in\textbf{Pa}_{>0}, \mu_w(j)=i, j=0 \ \text{and} \ k=n-1,\\ 
	0,                &  \text{otherwise}.   
	\end{array}
	\right.$$
\end{definition}

\begin{example}
	Let $A=kQ/I$ be the algebra given by the bound quiver $$Q: \ \xymatrix{1 \ar@<0.7ex>[r]^{a} \ar@<-0.5ex>[r]_c&2\ar@<0.7ex>[r]^{b} \ar@<-0.5ex>[r]_d &3}$$ with $I=\langle ab, cd\rangle$ and let $w$ be the generalized string $w=a^{-1}\cdot c\cdot d \cdot (cb)^{-1}=w_1\cdot w_2\cdot w_3\cdot w_4$. Then, as we have noticed above, $\mu_w(0)=0, \mu_w(1)=-1, \mu_w(2)=0, \mu_w(3)=1, \mu_w(4)=0$. Thus, from \eqref{def-string-complex-modules}, we get:
	\begin{itemize}
		\item For $i=-1$, we have $P_{w}^{-1}=\displaystyle\bigoplus_{j=0}^4\delta(\mu_w(j),-1)P_{c(j)}=P_{c(1)}=P_{t(w_1)}=P_1$.
		\item For $i=0$, we have $P_{w}^{0}=\displaystyle\bigoplus_{j=0}^4\delta(\mu_w(j),0)P_{c(j)}=P_{c(0)}\oplus P_{c(2)}\oplus P_{c(4)}=P_2\oplus P_2\oplus P_1$.
		\item For $i=1$, we have $P_{w}^{1}=\displaystyle\bigoplus_{j=0}^4\delta(\mu_w(j),1)P_{c(j)}=P_{c(3)}=P_{t(w_3)}=P_3.$
	\end{itemize}
	Now, let us calculate de differential maps. According to \eqref{def-string-complex-differentials},
	$$\partial_{w}^{-1}=\left(\partial_{jk}^{-1}\right)_{0\leq j,k\leq 4}=\begin{pmatrix}
	0                  & 0 & 0                 & 0  & 0\\
	\boxed{p(a)}& 0 & \boxed{p(c)}& 0  & \boxed{0}\\
	0                  & 0 & 0                & 0  & 0\\
	0                  & 0 & 0                & 0  & 0\\
	0                  & 0 & 0                & 0  & 0
	\end{pmatrix}\longleftrightarrow\begin{pmatrix}
	p(a) & p(c) & 0
	\end{pmatrix}$$
	and 
	$$\partial_{w}^{0}=\left(\partial_{jk}^{0}\right)_{0\leq j,k\leq 4}=\begin{pmatrix}
	0      & 0 & 0    & \boxed{0}       & 0\\
	0      & 0 & 0    & 0                    & 0\\
	0      & 0 & 0    & \boxed{p(d) } & 0\\
	0      & 0 & 0    & 0                    & 0\\
	0      & 0 & 0    & \boxed{p(cb)} & 0
	\end{pmatrix}\longleftrightarrow\begin{pmatrix}
	0 \\ p(d) \\ p(cb)
	\end{pmatrix}$$
	
	Hence, the string complex associated to $w$ is 
	$$P_w^\bullet: \ \xymatrix{\cdots\ar[r] & 0\ar[r] & P_1\ar[r]^(0.3){\partial_{w}^{-1}}&P_2\oplus P_2\oplus P_1\ar[r]^(0.7){\partial_{w}^0}&P_3\ar[r]&0\ar[r]&\cdots}$$
	
\end{example}

\subsection{Special sets and preliminary lemmas}

At this point we state and prove several results, which are the analogous,  in the case of string almost gentle algebras, to Lemmas 5 and 6 in \cite{Be-Me}. They will allow us to describe a family of indecomposable objects in the bounded derived categories of a SAG algebra.\\

For this, we start by showing what is the structure of the kernel of a morphism $p(w)$ for some $w\in\textbf{Pa}_{>0}$ when $A=kQ/I$ is a SAG algebra. From the definition of a SAG algebra, we know that there are at most two arrows $a$ and $b$ such that $t(a)=t(b)=s(w)$. We also know that it cannot happen that $aw\neq 0$ and $bw\neq 0$. Thus, at least one of $aw$ or $bw$ must be zero. Suppose, without loss of generality, that $bw=0$. It can also occur that $aw=0$. This cannot happen if $A$ is gentle and so, this is a fundamental difference with the case of gentle algebras, in which kernels are always cyclic. That is, in gentle algebras, $\ker p(w)=0$ or $\ker p(w)=Ab$ (see Lemma 5 in \cite{Be-Me}).

According to this, we have the following result, which is a generalization of Lemma 5, part (1),  in \cite{Be-Me}. It allows us to study the structure of $\ker p(w)$ for some $w\in\textbf{Pa}_{>0}$, and more generally, the structure of the projective resolutions of string and band complexes for the case of SAG algebras.
\begin{lemma}\label{New-lemma5-part1}
	Let $A=kQ/I$ be a SAG algebra and let $w\in\textbf{Pa}_{>0}$. Suppose there are two arrows $a$ and $b$ such that $aw=0$ and $bw=0$. Then  $\ker p(w)=Aa\oplus Ab$.
\end{lemma}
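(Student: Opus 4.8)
The plan is to compute $\ker p(w)$ directly, show it coincides with the radical $\operatorname{rad}(P_{s(w)})$ of the source projective, and then split that radical as $Aa\oplus Ab$. Recall that $p(w)\colon P_{s(w)}\to P_{t(w)}$ is given by $u\mapsto uw$, that $P_{s(w)}=Ae_{s(w)}$ has $k$-basis the paths of $\textbf{Pa}$ ending at $s(w)$, and that $\operatorname{rad}(P_{s(w)})$ is spanned by those of positive length. I would also note at the outset that, by hypothesis, $t(a)=t(b)=s(w)$ and that, since a SAG algebra has at most two arrows ending at any vertex, $a$ and $b$ are \emph{exactly} the arrows ending at $s(w)$.

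First I would prove $\operatorname{rad}(P_{s(w)})\subseteq\ker p(w)$: if $p$ is a path of positive length with $t(p)=s(w)$, write $p=\tilde p\,a'$ with $a'$ its last arrow; then $a'\in\{a,b\}$, so $a'w=0$ in $A$, and hence $p(w)(p)=pw=\tilde p\,(a'w)=0$ by associativity in $A$. (Equivalently, $a'w\in I$ forces $a'w_1\in I$, where $w_1$ is the first arrow of $w$, because $w\in\textbf{Pa}$ has no subpath in $I$ while $I$ is generated by paths of length two; so $pw\in I$.) Conversely, decomposing $x\in P_{s(w)}$ as $x=\lambda e_{s(w)}+y$ with $y\in\operatorname{rad}(P_{s(w)})$ gives $p(w)(x)=\lambda w$ by the inclusion just proved, and since $w\neq 0$ this vanishes precisely when $\lambda=0$, i.e. when $x\in\operatorname{rad}(P_{s(w)})$. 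Thus $\ker p(w)=\operatorname{rad}(P_{s(w)})$.

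It then remains to identify $\operatorname{rad}(P_{s(w)})$ with $Aa\oplus Ab$. Every positive-length path ending at $s(w)$ has a last arrow, which is $a$ or $b$, so it lies in $Aa$ or in $Ab$; conversely $Aa,Ab\subseteq\operatorname{rad}(P_{s(w)})$, giving $\operatorname{rad}(P_{s(w)})=Aa+Ab$. This sum is direct because $Aa$ (resp. $Ab$) is spanned by the basis paths whose last arrow is $a$ (resp. $b$), and these two families are disjoint since $a\neq b$; hence $Aa\cap Ab=0$ and $\ker p(w)=Aa\oplus Ab$. The one point that genuinely distinguishes this from the gentle situation of Lemma 5 in \cite{Be-Me}, where $\ker p(w)$ is at most cyclic, is precisely the step showing the kernel exhausts the \emph{whole} radical once both $aw$ and $bw$ vanish; the translation of "$pw\in I$" into statements about length-two subpaths is routine, so I do not expect a serious obstacle there.
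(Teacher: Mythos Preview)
Your proof is correct and follows essentially the same approach as the paper's: both arguments rest on the observation that any nontrivial path ending at $s(w)$ has $a$ or $b$ as its last arrow, hence lies in $Aa$ or $Ab$. The paper's version is a two-line sketch that treats a kernel element as a single basis path and leaves the directness of $Aa\oplus Ab$ implicit; your intermediate identification $\ker p(w)=\operatorname{rad}(P_{s(w)})$ and your explicit verification that $Aa\cap Ab=0$ make the same reasoning more complete, but do not constitute a genuinely different route.
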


\begin{proof}[Proof]
	
	It is clear that $Aa\oplus Ab \subseteq \ker p(w)$. For the other inclusion, let $u$ be an element of $\ker p(w)$. Then, since $A$ is a SAG algebra, $u$ must have either $a$ or $b$ as its last arrow. Hence $u\in Aa$ or $u\in Ab$.
	%
	%
\end{proof}

Lemma \ref{New-lemma5-part1} establishes that, for the case of SAG algebras, kernels can be generated for at most two arrows, in contrast with the gentle case.

It is important to notice that we need a generalization of the cyclic sets $Q_c$, $\overline{GSt}_c$, $\overline{GSt}^c$, $GSt_c$, and $GSt^c$ introduced in \cite{Be-Me}.

Let us define a new set, which generalizes the set of cyclic arrows $Q_c$. To this end, we first rewrite the set $Q_c$.

$$
\begin{array}{ll}
Q_c := \{a\in Q_1 \mid  & \exists a_m,\dots, a_1\in Q_1 \ \text{such that}\ t(a_i)=s(a_{i+1}) \ \text{for} \ i=1,\dots m, \\ & \text{where} \ a_{m+1}=a, t(a_{m+1})=s(a_1), \ \text{and} \  a_ia_{i+1}=a_{m+1}a_1=0\}.
\end{array}
$$

It is clear that this definition of $Q_c$ coincides with the definition given in \cite{Be-Me}. Now, we define the new set (see Figure \ref{fig:Qc^*} below)
$$
\begin{array}{ll}
Q_c^* := \{a\in Q_1 \mid  & \exists a_m,\dots, a_1\in Q_1 \ \text{such that}\ t(a_i)=s(a_{i+1}) \ \text{for} \ i=1,\dots m, \\ & \text{where} \ a_{m+1}=a, \ \text{and for some} \ 1\leq j\leq m+1, \ t(a_j)=s(a_1) \\ & \text{with} \  a_ia_{i+1}=a_ja_1=0\}.
\end{array}
$$

\begin{figure}[h]
	\centering
	\includegraphics[width=0.4\linewidth]{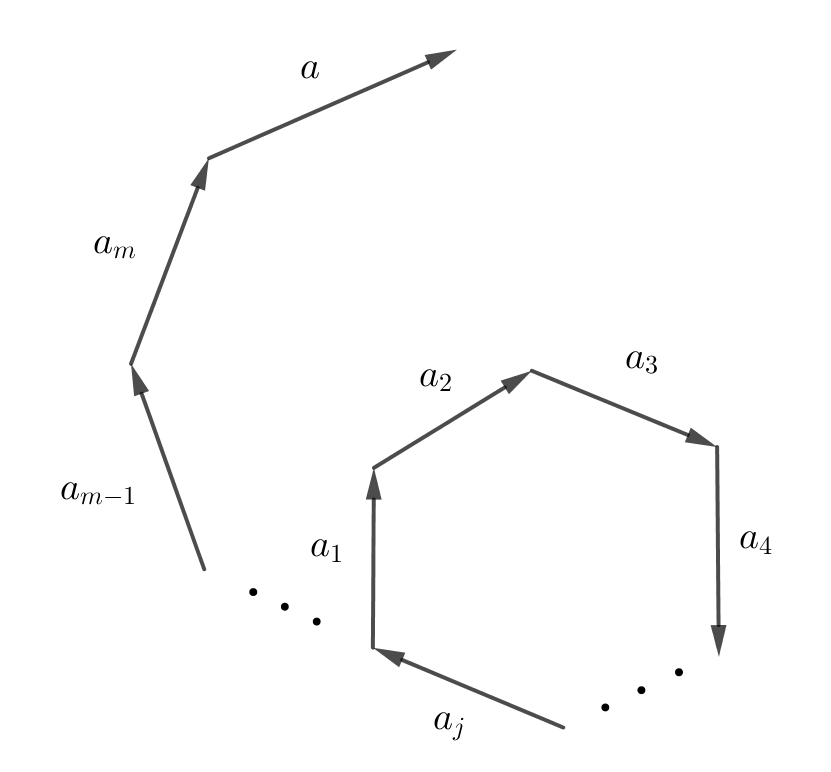}
	\caption{Definition of $Q_c^*$}
	\label{fig:Qc^*}
\end{figure}

With this definition, we have that $Q_c\subseteq Q_c^*$ and, in the case of gentle algebras, $Q_c^*=Q_c$. The elements of $Q_c^*$ are still called \textit{cyclic arrows}. 

We denote by $\overline{GSt}_{c}^*$ the set of generalized strings $w=w_1\cdot w_2\cdots w_n$ of positive length such that $\mu(w)=\mu_w(0)=0$ and there exists $a\in Q_c^*$  with $a\cdot w\in GSt$  or $a\in\ker p(w_{l}^{-1})\cap \ker p(w_{l+1})$ for some even index $l> 0$ with $\mu_w(l)=0$.\\

By $\overline{GSt}_*^c$ we also denote the set of generalized strings $w=w_1\cdot w_2\cdots w_n$ of positive length such that $\mu(w)=\mu_w(n)$ and there exists $a\in Q_c^*$ such that $w\cdot a^{-1}\in GSt$.

Thus, we have that each of these sets reduce to the corresponding one for the case of gentle algebras. Using these new special sets, we can generalize Lemma 6 in \cite{Be-Me}, with the following two lemmas.

\begin{lemma}\label{lemma7-n-converse-SAG}
	Let $w=w_1\cdot w_2\cdots w_n$ be a generalized string. If $w\in\overline{GSt}_c^*$ or $w\in\overline{GSt}_*^c$, then $P_{\beta(P_w^\bullet)^\bullet}^\bullet\notin K^b(\text{pro} \ A)$. Here, the degree in which the complex $P_w^{\bullet}$ is good truncated is precisely $\mu(w)$.
\end{lemma}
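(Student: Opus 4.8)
The plan is to show that the minimal projective resolution of the good truncation $\beta(P_w^\bullet)^\bullet$ is infinite, by exhibiting an explicit non-terminating chain of syzygies that ``feeds'' on a cyclic arrow $a\in Q_c^*$. The two hypotheses $w\in\overline{GSt}_c^*$ and $w\in\overline{GSt}_*^c$ are handled in parallel, since one passes to the other by replacing $w$ with $w^{-1}$ and reversing the relevant inequalities on $\mu_w$; so I would treat $w\in\overline{GSt}_c^*$ in detail and indicate the symmetric argument. First I would recall, using Definition~\ref{def-string-complex} and the fact that $\mu(w)=\mu_w(0)=0$, that when we form $\beta(P_w^\bullet)^\bullet$, the bottom degree of the bounded complex is $t=\mu(w)=0$, and the new term in degree $-1$ is $\ker\partial_w^0$. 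By the description of the differential in \eqref{def-string-complex-differentials}, the relevant component of $\partial_w^0$ out of the summand $P_{c(l)}$ (for the distinguished even index $l$ with $\mu_w(l)=0$) is, up to the neighbouring letters, essentially multiplication by a path; and by hypothesis either $a\cdot w\in GSt$ (the case $l=0$) so that $a$ is a left-completion-type arrow annihilating the relevant path, or $a\in\ker p(w_l^{-1})\cap\ker p(w_{l+1})$ directly. In either case Lemma~\ref{New-lemma5-part1} (or its cyclic variant) identifies a copy of $Aa=P_{t(a)}$ inside the kernel, and hence a projective cover map $P_{s(a)}\to\ker\partial_w^0$ whose kernel again contains $Aa'$ for the next arrow $a'$ in the cyclic sequence defining $a\in Q_c^*$.

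The heart of the argument is then an induction on the length of the minimal projective resolution of $\beta(P_w^\bullet)^\bullet$: I would set up the syzygy modules $\Omega^s$ and show that for each $s$ beyond some point, $\Omega^s$ has a direct summand isomorphic to a rotation $M_{a_j}$ of the cyclic module, so that $\Omega^{s+1}$ again has such a summand. Concretely, writing the cyclic data $a_{m+1}=a,a_m,\dots,a_1$ with $a_ia_{i+1}=0$ and $a_ja_1=0$ as in the definition of $Q_c^*$, the radical of $P_{s(a_i)}$ contains $A a_{i-1}$ (indices cyclically), so the minimal resolution cannot stop: each syzygy contributes at least one indecomposable projective $P_{s(a_i)}$ and the process cycles with period dividing $m+1-j+1$. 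This periodicity (or at least non-termination) is exactly what forces $P_{\beta(P_w^\bullet)^\bullet}^\bullet\notin K^b(\mathrm{pro}\ A)$, since membership in $K^b(\mathrm{pro}\ A)$ is equivalent, by Lemma~\ref{Lem:lemma1} and the fact that we are in $K^{-,b}(\mathrm{pro}\ A)$, to the minimal projective resolution being bounded.

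I expect the main obstacle to be bookkeeping rather than conceptual: one has to verify carefully that the arrow $a$ promised by the hypothesis actually generates a \emph{nonzero, non-redundant} direct summand of $\ker\partial_w^0$ that survives passing to the minimal (radical) projective cover, i.e.\ that $Aa$ is not already killed by the other components of the differential coming from the neighbouring letters $w_l$, $w_{l+1}$ of the string. This is where the combinatorial conditions defining $\overline{GSt}_c^*$ (the constraints $p_2(w_i)\neq p_1(w_{i+1})$ built into being a generalized string, together with $a\cdot w\in GSt$ or $a\in\ker p(w_l^{-1})\cap\ker p(w_{l+1})$) are used in an essential way: they guarantee that the ``new'' arrow $a$ points in a direction genuinely different from the string letters, so no cancellation occurs and the projective cover of the kernel has the claimed summand. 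A secondary point is to make the argument uniform whether the relevant index is $l=0$ (coming from $a\cdot w\in GSt$, the ``left end'' case) or an interior even $l>0$ with $\mu_w(l)=0$; in the interior case one uses $a\in\ker p(w_l^{-1})\cap\ker p(w_{l+1})$ together with Lemma~\ref{New-lemma5-part1} directly, and in the end case one uses the left-completion description of paths in $\textbf{Pa}$ recalled in Section~\ref{Sec-Preliminaries}. Once the ``first step'' summand is secured, the induction that the resolution never terminates is then essentially the monomial-algebra syzygy computation for a cyclic path of zero-relations, carried out inside $\mathfrak{p}(A)$, and I would write it out as a short induction with the cyclic index shift made explicit.
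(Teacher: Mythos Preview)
Your proposal is correct and follows essentially the same approach as the paper: both isolate a direct summand $Aa$ of $\ker\partial_w^{\mu(w)}$ coming from the cyclic arrow $a\in Q_c^*$, and then argue that the minimal projective resolution of $Aa$ never terminates because of the cycle of zero-relations $a_ia_{i+1}=0$ built into the definition of $Q_c^*$. The paper carries out the first step by an explicit case split on $n=1$ versus $n>1$ and on whether $w_2$ or $w_2^{-1}$ lies in $\textbf{Pa}_{>0}$, writing out $P_w^0$, $P_w^1$ and the matrix of $\partial_w^0$ in each case, whereas you describe the same computation more abstractly; conversely, you spell out the syzygy induction more explicitly than the paper, which simply asserts that $a\in Q_c^*$ forces $Aa$ to have infinite minimal projective resolution. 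One small slip: $Aa$ is not $P_{t(a)}$ but a proper submodule of it, with projective cover $P_{s(a)}\twoheadrightarrow Aa$ via $p(a)$; your subsequent description of the cover is correct, so this is only a notational glitch.
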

\begin{proof}
	Let $w=w_1\cdot w_2\cdots w_n$ be a generalized string. Here we will consider the case $w\in\overline{GSt}_c^*$. The case $w\in\overline{GSt}_*^c$ is similar. 
	
	If $w\in\overline{GSt}_c^*$, then $\mu(w)=0$ and there exists $a\in Q_c^*$ such that $a\cdot w=a\cdot w_1\cdot w_2\cdots w_n\in GSt$ or  $a\in\ker p(w_{l}^{-1})\cap\ker p(w_{l+1})$ for some even index $l> 0$ with $\mu_w(l)=0$.
	
	Again, we will consider the case where $a\cdot w\in GSt$. The case in which $a\in\ker p(w_{l}^{-1})\cap\ker p(w_{l+1})$ is completely analogous. Since $\mu(w)=0$, it follows that $w_1\in\textbf{Pa}_{>0}$ and hence $aw_1=0$.
	
	Since $A=k(Q, I)$ is a string almost gentle algebra, there is at most one  arrow $b$, different from $a$, such that $t(b)=t(a)=s(w_1)$. We consider the situation in which $bw_1=0$. The other cases are covered in \cite{Be-Me}. According to Lemma \ref{New-lemma5-part1}, we have  $\ker p(w_1)=Aa\oplus Ab$.
	
	Now, if $n=1$, that is, if $w=w_1$, the complex $P_w^\bullet$ is
	
	$$P_w^\bullet: \ \xymatrix{\cdots\ar[r]&0\ar[r]&P_{s(w_1)}\ar[r]^{p(w_1)}&P_{t(w_1)}\ar[r]&\cdots}$$ 
	
	Since $\ker p(w_1)=Aa\oplus Ab$ and $a\in Q_c^*$, it follows that $Aa$ has an infinite (minimal) projective resolution, so the same is true for $\ker p(w_1)$. Hence $P_{\beta(P_w^\bullet)^\bullet}^\bullet\notin K^b(\text{pro} \ A)$.
	
	If $n>1$, we must consider two cases:\\
	
	\textit{Case 1:} $w_2\in\textbf{Pa}_{>0}$. In this case, we have $\mu_w(0)=0$, $\mu_w(1)=1$, $\mu_w(2)=2$ and $w_1w_2=0$. We represent this situation as follows:
	
	$$
	\xymatrix{\cdot\ar@{~>}[rr]^{w_1}&&\cdot\ar@{~>}[rr]^{w_2}&&\cdot\ar@{.}[dl]\ar@{.}[r]&\\
		&&                                    &&}
	$$
	The possibilities for some of the next values of $\mu_w$ (in case they exist) are:
	
	$$\mu_w(3)=\left\{\begin{array}{l}
	1 \\
	3
	\end{array}\right. ,\quad \mu_w(4)=\left\{\begin{array}{l} 0\\2\\4\end{array}\right.,\quad \mu_w(5)=\left\{\begin{array}{l} 1\\3\\5\end{array}\right.$$
	Thus, in the first two places of the complex $P_w^\bullet$, we have 
	$$P_w^0=P_{s(w_1)}\oplus0\oplus0\oplus0\oplus \delta(\mu_w(4),0)P_{t(w_4)}\oplus\cdots$$
	and
	$$P_w^1=0\oplus P_{t(w_1)}\oplus0\oplus \delta(\mu_w(3),1)P_{t(w_3)}\oplus 0 \oplus \delta(\mu_w(5),1)P_{t(w_5)}\oplus\cdots$$
	The complex $P_w^\bullet$ is of the form
	$$P_w^\bullet: \ \xymatrix{\cdots\ar[r]&0\ar[r]&P_w^0\ar[r]^{\partial_w^0}&P_w^1\ar[r]^{\partial_w^1}&P_w^2\ar[r]&\cdots}$$ 
	where $$\partial_w^0 = \begin{pmatrix} 
	0         & p(w_1)   &   0       &  0         & 0 &\cdots  & 0          \\
	0         & 0           &   0       &  0         & 0 &\cdots  & 0          \\
	0         & 0           &   0       &  0         & 0 &\cdots  & 0          \\
	0         & 0           &   0       &  0         & 0 &\cdots  & 0          \\ 
	0         & 0           &   0       &  \ast     & 0 &\cdots & 0          \\
	\vdots & \vdots   & \vdots & \vdots   & 0 &\ddots & \vdots  \\
	0         & 0          &  0        & 0          & 0 & \cdots & 0
	\end{pmatrix}
	$$
	and the morphism $\ast: \delta(\mu_w(4), 0)P_{t(w_4)}\longrightarrow \delta(\mu_w(3),1)$ is either  $p(w_4^{-1})$ or zero. It is also possible that other entries in the matrix are nonzero, but the important fact is that they are not in the second column.
	
	Now, $(u_0, u_1, \dots , u_n)\in \ker \partial_w^0$ if and only if $u_0w_1=0$ together with other equations. This shows that $u_0\in\ker p(w_1)=Aa\oplus Ab$ and hence $Aa\oplus Ab$ is a direct summand of $\ker \partial_w^0$. Since $a\in Q_c^*$, it follows that $Aa$ has an infinite (minimal) projective resolution, whence the same is true for $\ker\partial_w^0$. We conclude that  $$P_{\beta(P_w^\bullet)^\bullet}^\bullet\notin K^b(\text{pro} \ A).$$
	
	\textit{Case 2:} $w_{2}^{-1}\in\textbf{Pa}_{>0}$. In this case, we have $\mu_w(0)=0$, $\mu_w(1)=1$ , $\mu_w(2)=0$ and $w_1w_2\in St$. Additionally, since $\mu(w)=0$, then $w_3\in\textbf{Pa}_{>0}$, that is $\mu_w(3)=1$ (we are assuming $w_3$ exists. If not, the reasoning is similar to the case $n=1$). The following diagram represents the situation.
	
	$$
	\xymatrix{\cdot\ar@{~>}[rrr]^{w_1}&&&\cdot\ar@{~>}[dlll]^{w_2}&&\\
		\cdot\ar@{~>}[rrr]_{w_3}&&& \ar@{.}[dl]\ar@{.}[r]                  &&\\
		&&&                                                 &&}
	$$
	The possibilities for some of the next values of $\mu_w$ are
	$$\mu_w(4)=\left\{\begin{array}{l}
	0 \\
	2
	\end{array}\right. ,\quad \mu_w(5)=\left\{\begin{array}{l} 1\\3\end{array}\right.$$
	Therefore, for $P_w^0$ and $P_w^1$ we have
	$$P_w^0=P_{s(w_1)}\oplus0\oplus P_{t(w_2)}\oplus0\oplus \delta(\mu_w(4),0)P_{t(w_4)}\oplus0\oplus\cdots$$
	and
	$$P_w^1=0\oplus P_{t(w_1)}\oplus0\oplus P_{t(w_3)}\oplus 0 \oplus \delta(\mu_w(5),1)P_{t(w_5)}\oplus 0\oplus\cdots$$
	The complex $P_w^\bullet$ is of the form
	$$P_w^\bullet: \ \xymatrix{\cdots\ar[r]&0\ar[r]&P_w^0\ar[r]^{\partial_w^0}&P_w^1\ar[r]^{\partial_w^1}&P_w^2\ar[r]&\cdots}$$ 
	where $$\partial_w^0 = \begin{pmatrix} 
	0         & p(w_1)           &   0       &  0         &\cdots  & 0          \\
	0         & 0                   &   0       &  0         &\cdots  & 0          \\
	0         & p(w_2^{-1})   &   0       & p(w_3)   &\cdots  & 0          \\
	0         & 0                   &   0       &  0         &\cdots  & 0          \\ 
	\vdots & \vdots           & \vdots & \vdots   & \ddots & \vdots  \\
	0         & 0                   &  0        & 0          &  \cdots & 0
	\end{pmatrix}
	$$
	and possibly other entries of the matrix are nonzero, but they are not in the second column.
	Now, $(u_0, u_1, u_2, \dots, u_n)\in \ker\partial_w^0$ if and only if $u_0w_1+u_2w_{2}^{-1}=0$, $u_2w_3=0$ and other equations. This implies that $u_0\in \ker p(w_1)=Aa\oplus Ab$ and $u_2\in \ker p(w_2^{-1})\cap\ker p(w_3)$. The important facts are that $Aa$ is a direct summand of $\ker\partial_w^0$ and $a\in Q_c^*$. Thus, as in the previous case, we conclude that $$P_{\beta(P_w^\bullet)^\bullet}^\bullet\notin K^b(\text{pro} \ A).$$
\end{proof}

\begin{lemma}\label{Lemma7-n-direct-SAG}
	Let $w=w_1\cdot w_2\cdots w_n$ be a generalized string. If $P_{\beta(P_w^\bullet)^\bullet}^\bullet\notin K^b(\text{pro} \ A)$, then either $w\in\overline{GSt}_c^*$ or $w\in\overline{GSt}_*^c$.
\end{lemma}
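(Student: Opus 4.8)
The plan is to argue by contrapositive: assume $w\notin\overline{GSt}_c^*$ and $w\notin\overline{GSt}_*^c$, and prove $P_{\beta(P_w^\bullet)^\bullet}^\bullet\in K^b(\text{pro}\ A)$. Since $P_w^\bullet$ is a bounded complex of projectives whose lowest nonzero term sits in degree $\mu(w)$, the good truncation $\beta(P_w^\bullet)^\bullet$ agrees with $P_w^\bullet$ in degrees $\geq\mu(w)$ and has $\ker\partial_w^{\mu(w)}$ in degree $\mu(w)-1$; hence $P_{\beta(P_w^\bullet)^\bullet}^\bullet$ is obtained from $P_w^\bullet$ by splicing in a projective resolution of $\ker\partial_w^{\mu(w)}$, and it lies in $K^b(\text{pro}\ A)$ if and only if $\ker\partial_w^{\mu(w)}$ has finite projective dimension. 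So the whole statement reduces to controlling this kernel.

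The first main step is a combinatorial description of $\ker\partial_w^{\mu(w)}$. The indecomposable summands of $P_w^{\mu(w)}$ are indexed by the \emph{valleys} of $w$, that is, by the indices $j$ with $\mu_w(j)=\mu(w)$, and $\partial_w^{\mu(w)}$ is tridiagonal: from the summand $P_{c(j)}$ attached to a valley $j$ the differential is $p(w_j^{-1})$ into $P_{c(j-1)}$ (when $j>0$) and $p(w_{j+1})$ into $P_{c(j+1)}$ (when $j<n$). The key observation is that whenever a fixed projective $P_{c(k)}$ in degree $\mu(w)+1$ receives contributions from its two neighbouring valleys, the two ``legs'' are positive paths ending at $c(k)$ with \emph{distinct} last arrows --- this is exactly what the condition $w_iw_{i+1}\in St$ in the definition of $\overline{GSt}$ guarantees, using also Lemma \ref{lem-3.1} --- so the two images sit inside $P_{c(k)}$ in direct sum. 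Consequently $\ker\partial_w^{\mu(w)}\cong\bigoplus_{j\ \text{valley}}K_j$, where $K_j=\ker p(w_j^{-1})\cap\ker p(w_{j+1})$ for an interior valley, $K_0=\ker p(w_1)$ when $0$ is a valley (equivalently $\mu(w)=0$), and $K_n=\ker p(w_n^{-1})$ when $n$ is a valley (equivalently $\mu(w)=\mu_w(n)$).

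The second step is to read off projective dimensions. By Lemma \ref{New-lemma5-part1} each $K_j$ is a direct sum of at most two modules of the form $Aa$ with $a\in Q_1$, the relevant arrows being precisely those into $c(j)$ annihilating on the left the first arrow of every leg meeting the valley. For a monomial algebra with relations of length two one has $\operatorname{pd}(Aa)=\infty$ exactly when $a\in Q_c^*$; this is the mechanism already used in the proof of Lemma \ref{lemma7-n-converse-SAG}, where $\ker p(w_1)=Aa\oplus Ab$ with $a\in Q_c^*$ produced the infinite tail. Therefore $\operatorname{pd}\bigl(\ker\partial_w^{\mu(w)}\bigr)=\infty$ if and only if some valley $j$ carries a summand $Aa$ with $a\in Q_c^*$.

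It then remains to translate this last condition into membership in $\overline{GSt}_c^*\cup\overline{GSt}_*^c$, contradicting the hypothesis. If the offending summand lies in $K_0$, then $\mu(w)=0$, $aw_1\in I$, and prepending $a$ produces $a\cdot w\in GSt$, so $w\in\overline{GSt}_c^*$; symmetrically, a summand in $K_n$ gives $\mu(w)=\mu_w(n)$ and $w\cdot a^{-1}\in GSt$, so $w\in\overline{GSt}_*^c$; and a summand in an interior $K_l$ gives $a\in\ker p(w_l^{-1})\cap\ker p(w_{l+1})$ at a valley $l$ (so $l$ is even and $\mu_w(l)=\mu(w)$), placing $w$ in $\overline{GSt}_c^*$. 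I expect the interior-valley case to be the main obstacle: one must check carefully that a cyclic arrow detected at an interior valley really forces $w$ into $\overline{GSt}_c^*$ --- in particular that the valley sits at level $0$ with even index --- and this is where the parity and sign of $\mu_w$ have to be traced through the ``three maximal paths through one vertex'' local configurations of Remark \ref{Rem-involution-SAG}. The other place that needs genuine argument, rather than appeal to the gentle case, is the direct-sum decomposition $\ker\partial_w^{\mu(w)}\cong\bigoplus_j K_j$, and especially the claim that the legs at a peak have distinct last arrows.
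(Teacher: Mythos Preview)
Your proposal is correct and follows essentially the same route as the paper's proof: both arguments rest on the direct-sum decomposition of $\ker\partial_w^{\mu(w)}$ over the valleys of $w$ (with pieces $\ker p(w_1)$, $\ker p(w_l^{-1})\cap\ker p(w_{l+1})$, and possibly $\ker p(w_n^{-1})$), followed by the observation that such a piece has infinite projective dimension exactly when it contains a summand $Aa$ with $a\in Q_c^*$. The paper argues directly rather than by contrapositive, normalises to $\mu(w)=0$ by passing to $w^{-1}$ when $w_1^{-1}\in\textbf{Pa}_{>0}$, and handles the endpoint summand $\ker p(w_n^{-1})$ by the same inversion---this is precisely the mechanism that resolves your flagged concern about interior valleys sitting at level~$0$ with even index.
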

\begin{proof}
	We will show that if $w_1\in\textbf{Pa}_{>0}$, then $w\in\overline{GSt}_c^*$. By dual arguments, it can be shown that if $w_{1}^{-1}\in\textbf{Pa}_{>0}$, then $w\in\overline{GSt}_*^c$.
	
	Let us begin by assuming that $\mu(w)=0$, that is, we suppose that $w$ has as number of direct paths greater than or equal to the number of inverse paths.  We are allowed to do so, since, if $w$ has more inverse than direct paths, we consider $w^{-1}$ instead of $w$.
	If $w_1\in \textbf{Pa}_{>0}$ then $\mu_w(0)=0$, $\mu_w(1)=1$ and since $\mu(w)=0$, the possibilities for some of the next values of $\mu_w$ are 
	
	$$\mu_w(2)=\left\{\begin{array}{l}
	0 \\
	2
	\end{array}\right. ,\quad \mu_w(3)=\left\{\begin{array}{l} 1\\3\end{array}\right. ,\quad \mu_w(4)=\left\{\begin{array}{l} 0\\2\\4 \end{array}\right., \quad \mu_w(5)=\left\{\begin{array}{l} 1\\3\\5 \end{array}\right.
	$$
	Thus, for $\mu_w(l)=0$ with $l>0$, it is necessary that $l$ be an even number greater than or equal to two and less that $n$. Also, it is possible that $\mu_w(n)=0$ when $n$ is even. It follows that the complex $P_w^\bullet$ is 
	$$P_w^\bullet: \ \xymatrix{\cdots\ar[r]&0\ar[r]&P_w^0\ar[r]^{\partial_w^0}&P_w^1\ar[r]^{\partial_w^1}&P_w^2\ar[r]&\cdots}$$ 
	where $P_w^0$, $P_w^1$ are as in Lemma \ref{lemma7-n-converse-SAG} and the general form of $\ker\partial_w^0$ is either one of the following:
	\begin{enumerate}
		\item [(i)]$\ker\partial_w^0\cong \ker p(w_1)\oplus\cdots\oplus \ker p(w_{l}^{-1})\cap \ker p(w_{l+1})\oplus\cdots$, whenever $n$ is odd and $\mu_w(l)=0$ for some $2\leq l\leq n-1$.
		\item [(ii)] $\ker\partial_w^0\cong \ker p(w_1)\oplus\cdots\oplus \ker p(w_{l}^{-1})\cap \ker p(w_{l+1})\oplus\cdots\oplus \ker p(w_{n}^{-1})$, whenever $n$ is even, there is an even index $2\leq l\leq n-2$ with $\mu_w(l)=0$ and $\mu_w(n)=0$.
		\item [(iii)] $\ker\partial_w^0\cong \ker p(w_1)\oplus\cdots\oplus \ker p(w_{l}^{-1})\cap \ker p(w_{l+1})\oplus\cdots$, whenever $n$ is even, there is an even index $2\leq l\leq n-2$ with $\mu_w(l)=0$ and $\mu_w(n)>0$.
	\end{enumerate}
	Now, since $P_{\beta(P_w^\bullet)^\bullet}^\bullet\notin K^b(\text{pro} \ A)$, then $\ker\partial_w^0$ has an infinite minimal projective resolution. If $\ker\partial_w^0$ is as in (i), then at least one direct summand also has an infinite minimal projective resolution. If it is the case for $\ker p(w_1)$, it can be shown that there is an arrow $a\in Q_c^*$ such that $a\cdot w\in GSt$, and hence $w\in \overline{GSt}_c^*$. 
	
	If the minimal projective resolution of $\ker p(w_{l}^{-1})\cap\ker p(w_{l+1})$ is not bounded for some $2\leq l\leq n-1$ with $\mu_w(l)=0$, the same arguments allow us to show that there exists $a\in Q_c^*$ such that $a\in\ker p(w_{l}^{-1})\cap\ker p(w_{l+1})$. Therefore, the conclusion follows.\\
	
	If $\ker\partial_w^0$ is as in (iii), the reasoning is the same. Finally, if $\ker\partial_w^0$ is as in (ii) and $\ker p(w_{n}^{-1})$ has infinite minimal projective resolution, we consider $w^{-1}$ instead of $w$ and it can be shown that $w\cong_s w^{-1}\in\overline{GSt}_c^*$.
\end{proof}

If we put  Lemmas \ref{lemma7-n-converse-SAG} and \ref{Lemma7-n-direct-SAG} together we have the following result for SAG algebras, which provides a characterization for a string complex to have infinite minimal projective resolution. A complex with this property will be called a \textit{periodic string complex}.  
\begin{theorem}\label{Lemma7-SAG}
	Let $A=k(Q,I)$ be a SAG algebra. Then
	\begin{enumerate}
		\item For every $w\in GBa$ and $f\in \text{Ind} \ k[x]$, we have $\beta(P_{w,f}^\bullet)=P_{w,f}^\bullet$.
		\item If $w=w_1\cdot w_2\cdots w_n$ is a generalized string, then $P_{\beta(P_w^\bullet)^\bullet}^\bullet\notin K^b(\text{pro} \ A)$ if and only if either $w\in\overline{GSt}_c^*$ or $w\in\overline{GSt}_*^c$.
	\end{enumerate}
\end{theorem}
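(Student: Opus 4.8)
My plan is to handle the two parts separately. Part~(2) requires no new work: it is exactly the conjunction of Lemmas~\ref{lemma7-n-converse-SAG} and~\ref{Lemma7-n-direct-SAG} (the first proves ``$w\in\overline{GSt}_c^*$ or $w\in\overline{GSt}_*^c$'' $\Longrightarrow P_{\beta(P_w^\bullet)^\bullet}^\bullet\notin K^b(\text{pro}\ A)$, the second its converse), so I would simply cite them. Hence all the content lies in part~(1). A band complex $P_{w,f}^\bullet$ is a bounded complex of projectives whose lowest nonzero term sits in degree $\mu(w)$, with $P_{w,f}^{\mu(w)-1}=0$; so, by the definition of the good truncation, the claim $\beta(P_{w,f}^\bullet)=P_{w,f}^\bullet$ is equivalent to the single statement that the leftmost differential is injective, i.e.\ $\ker\partial_{w,f}^{\mu(w)}=0$. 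Granting this, $\beta(P_{w,f}^\bullet)=P_{w,f}^\bullet$, and a fortiori $P_{\beta(P_{w,f}^\bullet)^\bullet}^\bullet=P_{w,f}^\bullet\in K^b(\text{pro}\ A)$.

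To prove injectivity I would first normalize. Since band complexes depend only on the $\cong_r$-class of $w$, a suitable cyclic permutation $w[j]$ (together with, if necessary, passing to $w^{-1}$) lets me assume $\mu(w)=\mu_w(0)=0$, so that $\mu_w(j)\ge 0$ for all $j$ and $P_{w,f}^0=\bigoplus_{j:\,\mu_w(j)=0}P_{c(j)}^{\deg f}$. Because $0$ is a minimum of $\mu_w$, each bottom index $j$ has $\mu_w(j\pm1)=1$ (cyclically for $j=0$); reading off Definition~\ref{def-band-complex}, the only nonzero components of $\partial_{w,f}^0$ out of $P_{c(j)}^{\deg f}$ are $p(w_j^{-1})\,\mathrm{Id}_{\deg f}$ and $p(w_{j+1})\,\mathrm{Id}_{\deg f}$ for $0<j<n$, while out of $P_{c(0)}^{\deg f}$ they are $p(w_1)\,\mathrm{Id}_{\deg f}$ and $p(w_n^{-1})\,F_{f(x)}$.

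The core is then $\ker\partial_{w,f}^0=0$, which I would argue in two stages. \emph{Localization}: let $x=(x_j)_j\in\ker\partial_{w,f}^0$ and fix a bottom index $j$; both neighbours of $j$ lie in degree $1$, and the contribution from $x_j$ into such a neighbour $P_{c(k)}^{\deg f}$ lands in the image of a multiplication map $p(u)$ with $u$ a path ending at $c(k)$. Because $w$ is a band, the relevant consecutive pairs along $w$ (read cyclically) lie in $St$, and this forces the last arrow of $u$ to differ from the last arrow of any other path feeding the same $P_{c(k)}^{\deg f}$; by the special-biserial structure the images of the competing $p(\cdot)$'s are therefore linearly independent submodules of $P_{c(k)}^{\deg f}$, and $F_{f(x)}$ --- invertible since $f\neq x^d$ --- leaves them unchanged. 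Hence each contribution of $x_j$ vanishes separately, giving $x_j\in(\ker p(w_j^{-1})\cap\ker p(w_{j+1}))^{\deg f}$ for $0<j<n$ and the wrap-around analogue $x_0\in(\ker p(w_1)\cap\ker\!\big(p(w_n^{-1})F_{f(x)}\big))^{\deg f}$. \emph{Vanishing}: it remains to show these intersections are zero. For this I would apply Lemma~\ref{New-lemma5-part1}, which writes $\ker p(w_j^{-1})$ and $\ker p(w_{j+1})$ as direct sums of submodules $Aa$ over the (at most two) arrows $a$ entering $c(j)$ that are killed, respectively, by $w_j^{-1}$ and by $w_{j+1}$, and then use the string condition at $c(j)$ to rule out an arrow killing \emph{both} outgoing paths; invertibility of $F_{f(x)}$ reduces the wrap-around case to the same computation. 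Combining the two stages yields $\ker\partial_{w,f}^{\mu(w)}=0$.

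The step I expect to be the real obstacle is the \emph{vanishing} stage. In the gentle case of~\cite{Be-Me} it is automatic, because kernels of multiplication maps are cyclic and two distinct cyclic submodules of that shape meet only in $0$. In a SAG algebra a kernel can be generated by two arrows (Lemma~\ref{New-lemma5-part1}), so one cannot argue from a property of a single map; instead the vanishing must be squeezed out of the combinatorics of the string $w_jw_{j+1}\in St$ together with the special-biserial relations entering and leaving $c(j)$, uniformly and with the Frobenius block at the wrap-around position carried along. This is exactly where the argument diverges from~\cite{Be-Me}, and where I would concentrate the effort.
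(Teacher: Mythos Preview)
For part~(2) your plan coincides with the paper's: the theorem is presented as the conjunction of Lemmas~\ref{lemma7-n-converse-SAG} and~\ref{Lemma7-n-direct-SAG}, with no further argument. For part~(1) the paper gives no proof at all --- the assertion is carried over from the gentle case in~\cite{Be-Me} --- so your outline already goes beyond what the paper offers, and your localization step is sound.

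The vanishing step, however, is not merely the hard point: it fails. The string condition $w_jw_{j+1}\in St$ only forces the first arrows $\alpha,\beta$ of $w_j^{-1}$ and $w_{j+1}$ (the two arrows \emph{leaving} $c(j)$) to be distinct; it imposes nothing on arrows \emph{entering} $c(j)$. In a SAG algebra one may well have an arrow $a$ with $t(a)=c(j)$ and $a\alpha=a\beta=0$ --- this is exactly situation~(1) of Remark~\ref{Rem-involution-SAG} --- and then $0\neq Aa\subseteq\ker p(w_j^{-1})\cap\ker p(w_{j+1})$. Concretely, take $Q_0=\{1,2,3\}$, arrows $a\colon 1\to 2$, $b,c\colon 2\to 3$, $d\colon 3\to 1$, and $I=\langle ab,ac,bd,cd\rangle$. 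This is a SAG algebra, $w=b\cdot c^{-1}\in\overline{GBa}$, and for $f=x-\lambda$ the single differential of $P_{w,f}^\bullet$ is $p(b)+\lambda\,p(c)\colon P_2\to P_3$, whose kernel is $Aa\neq 0$; hence $\beta(P_{w,f}^\bullet)\neq P_{w,f}^\bullet$. So the literal statement of part~(1) does not survive the passage from gentle to SAG, and no refinement of the combinatorics at $c(j)$ will close the gap you identified. (This does not affect the use of Theorem~\ref{Lemma7-SAG} downstream: band complexes lie in $K^b(\text{pro}\,A)$ trivially, and their indecomposability in Theorem~\ref{Thm-Main-Theorem-SAG} is obtained via the functor~$F$, not via part~(1).)
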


\subsection{Global dimension of a SAG algebra}

\vspace{1cm}

Let $A$ be a finite dimensional $k$-algebra. Recall that given an $A$-module $M$, the \textit{projective dimension} of $M$, denoted by $pd \ M$ is the smallest integer $d$ such that there exists a projective resolution of the form

$$\xymatrix{0\ar[r]&P_d\ar[r]&P_{d-1}\ar[r]&\cdots\ar[r]&P_1\ar[r]&P_0\ar[r]&M}.$$

If no resolution exists, then we say that $M$ has infinite projective dimension. The \textit{global dimension of $A$}, denoted by $gl.dim A$ is defined as the supremum of the projective dimensions of all $A$-modules, that is, $$gl.dim A:=\text{sup}\{pd \ M \mid M\in A-\text{mod}\}.$$

Theorem \ref{Lemma7-SAG} is very important because it deals with the structure of a SAG algebra. That is, it establishes when a SAG algebra has infinite global dimension. We state this in the following result.

\begin{theorem}\label{Th:gl.dim A-SAG}
	Let $A=k(Q, I)$ be a SAG algebra. If $\overline{GSt}_c^*\neq\emptyset$ or $\overline{GSt}_*^c\neq\emptyset$, then $gl.dim A=\infty$.
\end{theorem}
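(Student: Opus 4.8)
The plan is to derive the result directly from Theorem \ref{Lemma7-SAG}, part (2), together with Corollary \ref{Cor:corollary1} and Remark \ref{Rem:remark1}. The logical skeleton is a contrapositive argument: I want to show that if $gl.\dim A<\infty$ then $\overline{GSt}_c^*=\emptyset$ and $\overline{GSt}_*^c=\emptyset$. So suppose $gl.\dim A<\infty$. Then $A$ has finite global dimension, and by Remark \ref{Rem:remark1} we have $\mathcal{X}(A)=\emptyset$, which means that for every $M^\bullet\in\text{ind}_0\,\mathfrak{p}(A)$ the projective resolution $P_{\beta(M^\bullet)^\bullet}^\bullet$ lies in $K^b(\text{pro}\,A)$.

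Next I would specialize this to string complexes. First I should observe that for any generalized string $w=w_1\cdots w_n$, the string complex $P_w^\bullet$ (or at least its good truncation, which is the relevant object after applying $\beta$) is an indecomposable object of $\mathfrak{p}(A)$; this is part of the earlier development of the paper (string and band complexes being indecomposable in $\mathfrak{p}(A)$ for SAG algebras, established via the functor $F$ and Corollary \ref{Cor-Lemma4-SAG}), so I may invoke it. Hence $P_w^\bullet$ — more precisely a representative of its isomorphism class in $\text{ind}_0\,\mathfrak{p}(A)$ — is one of the $M^\bullet$ to which the previous paragraph applies, and therefore $P_{\beta(P_w^\bullet)^\bullet}^\bullet\in K^b(\text{pro}\,A)$ for every generalized string $w$. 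By the equivalence in Theorem \ref{Lemma7-SAG}(2), this forces $w\notin\overline{GSt}_c^*$ and $w\notin\overline{GSt}_*^c$. Since $w$ was an arbitrary generalized string, both sets are empty, which is exactly the contrapositive of the claim. Taking the contrapositive back: if $\overline{GSt}_c^*\neq\emptyset$ or $\overline{GSt}_*^c\neq\emptyset$, then $gl.\dim A=\infty$.

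Alternatively — and this is essentially the same argument stated positively — pick $w\in\overline{GSt}_c^*$ (the case $w\in\overline{GSt}_*^c$ is symmetric). By Theorem \ref{Lemma7-SAG}(2), $P_{\beta(P_w^\bullet)^\bullet}^\bullet\notin K^b(\text{pro}\,A)$, i.e. the minimal projective resolution of the truncated complex is unbounded. Tracing through the proof of Lemma \ref{lemma7-n-converse-SAG}, the obstruction to boundedness is that some $\ker\partial_w^i$ has $Aa$ as a direct summand for a cyclic arrow $a\in Q_c^*$, and $Aa$ has an infinite minimal projective resolution; hence some syzygy module appearing in the resolution of a finitely generated $A$-module has infinite projective dimension, so $gl.\dim A=\infty$.

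The main obstacle — which is really a bookkeeping point rather than a deep difficulty — is making sure the quantifier over $\text{ind}_0\,\mathfrak{p}(A)$ in Remark \ref{Rem:remark1}/Corollary \ref{Cor:corollary1} genuinely covers string complexes: one must know that $P_w^\bullet\in\mathfrak{p}(A)$ (which holds by construction, since all differentials are built from $p(w_i)$ with $w_i$ of positive length, hence land in the radical) and that its class, or that of $\beta(P_w^\bullet)^\bullet$, is indecomposable and thus appears in the spectroid; both facts come from the functor $F$ machinery developed earlier in the section. Once that is in hand the proof is a one-line invocation of Theorem \ref{Lemma7-SAG}(2).
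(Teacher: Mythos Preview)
Your ``alternative'' direct argument is exactly the paper's proof: one unpacks the definition of $\overline{GSt}_c^*$ to extract a cyclic arrow $a\in Q_c^*$, observes (via Lemma~\ref{New-lemma5-part1} and the argument of Lemma~\ref{lemma7-n-converse-SAG}) that the module $Aa$ has infinite minimal projective resolution, and concludes $gl.\dim A=\infty$ directly from the existence of a single module of infinite projective dimension.

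Your primary contrapositive route through Remark~\ref{Rem:remark1} and $\mathcal{X}(A)=\emptyset$ is also correct, but more roundabout than necessary. In particular the appeal to indecomposability of $P_w^\bullet$ is superfluous: once $gl.\dim A<\infty$, every bounded complex of $A$-modules has a bounded projective resolution, so $P_{\beta(P_w^\bullet)^\bullet}^\bullet\in K^b(\text{pro}\,A)$ holds for \emph{any} $P_w^\bullet\in\mathfrak{p}(A)$ without ever needing to know it is indecomposable. This matters for the paper's logical order, since the indecomposability of string complexes is only established in Section~\ref{Sec:main-theorem-SAG} (Theorem~\ref{Thm-Main-Theorem-SAG}), \emph{after} the present theorem; invoking it here would not be circular but would be forward-referencing. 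The paper's direct module-theoretic argument avoids this entirely and is also more informative, since it isolates the concrete module ($Aa$ for $a\in Q_c^*$) witnessing infinite global dimension.
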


\begin{proof}
	Suppose that $\overline{GSt}_c^*\neq\emptyset$. The case $\overline{GSt}_*^c\neq\emptyset$ is similar.
	If $w=w_1\cdot w_2\cdots w_n\in \overline{GSt}_c^*$, then $\mu(w)=0$ and there exists $a\in Q_c^*$ such that $a\cdot w=a\cdot w_1\cdot w_2\cdots w_n\in GSt$ or  $a\in\ker p(w_{l}^{-1})\cap\ker p(w_{l+1})$ for some even index $l> 0$ with $\mu_w(l)=0$.
	
	Again, we will consider the case where $a\cdot w\in GSt$. The case in which $a\in\ker p(w_{l}^{-1})\cap\ker p(w_{l+1})$ is completely analogous. If $a\cdot w\in GSt$ and $\mu(w)=0$, necessarily $w_1\in\textbf{Pa}_{>0}$ and hence $aw_1=0$. According to Lemma \ref{New-lemma5-part1}, the general structure of $\ker p(w_1)$ is $\ker p(w_1)=Aa\oplus Ab$, where $b$ is an arrow such that $t(b)=s(w_1)$.
	Following the proof of Lemma \ref{lemma7-n-converse-SAG}, since $a\in Q_c^*$, we get that $Aa$ has infinite minimal projective resolution and this implies that $gl.dim \ A=\infty$.
\end{proof}

\begin{example}\label{Example-Qc*} Let $Q$ be the bound quiver
	
	$$ \xymatrix{&1\ar[r]^a&2\\
		5\ar@(ul,dl)[]_{x} \ar[r]_d &3\ar[r]_c\ar[ur]^b&4}
	$$	
	with $I=\langle db, dc, x^2, xd\rangle$. Then $A=kQ/I$ is a string almost gentle algebra over $k$, which is not a gentle algebra. In this case we have that $Q_c=\{x\}$ but $Q_c^*=\{x,d,c,b\}$.  
	
	Now, consider the generalized string $w=w_1\cdot w_2\cdot w_3=a\cdot b^{-1}\cdot c$. Then, $l(w)=3>0$, $\mu(w)=0$ and $\exists d\in Q_c^*$ such that $d\in \ker p(w_{2}^{-1})\cap\ker p(w_3)=\ker p(b)\cap \ker p(c)$, with $\mu_w(2)=0$. That is, $w\in \overline{GSt}_c^*$.
	
	According to Definition \ref{def-string-complex}, the complex $P_w^\bullet$ is 
	$$P_w^\bullet: \ \xymatrix{\cdots\ar[r]&0\ar[r]&P_1\oplus P_3\ar[r]^{\partial_w^0}&P_2\oplus P_4\ar[r]&0\ar[r]&\cdots}$$ 
	
	where $\partial_w^0=\begin{pmatrix}
	p(a)  & 0\\
	p(b)  & p(c)
	\end{pmatrix}$        
	
	Now, $\begin{pmatrix}u & v\end{pmatrix}\in\ker \partial_w^0$ if and only if $ua+vb=0$ and $vc=0$. Since $u\in P_1$ and $v\in P_3$, then $u=\alpha_0e_1$ and $v=\beta_0e_3+\beta_1d$ for some scalars $\alpha_0, \beta_0, \beta_1\in k$.
	
	From $ua+vb=0$, it follows that $\alpha_0a+\beta_0b=0$, and hence $\alpha_0=\beta_0=0$. Notice that the condition $vc=0$ is already satisfied. Therefore, we have that $u=0$ and $v=\beta_1d\in Ad=\ker p(b)\cap\ker p(c)$ and thus $$\ker \partial_w^0\cong \ker p(b)\cap\ker p(c)=Ad$$ (in general, we have for this case that $\ker\partial_w^0=\ker p(w_1)\oplus\ker p(w_2^{-1})\cap\ker p(w_3)$).	
	
	The projective cover of this kernel is $p(d): P_5\longrightarrow Ad$ and $\ker p(d)=Ax$. Since $x^2=0$, it is clear that the minimal projective resolution $P_{\beta(P_w^\bullet)^\bullet}^\bullet$ of $P_w^\bullet$ is
	
	$$\xymatrix@C=4mm@R=4.7mm{\cdots\ar[rr]&&P_5\ar[rr]^{p(x)}\ar@{->>}[dr]&                              &P_5\ar[rr]^{p(d)}\ar@{->>}[dr]&                           &P_w^0\ar[rr]^{\partial_w^0}&&P_w^1\ar[rr]&&0\ar[r]&\cdots\\
		&&                                                             &Ax\ar@{^(->}[ru]&                                                    & Ad\ar@{^(->}[ru]&                                       &&                       && }
	$$
	that is, $P_{\beta(P_w^\bullet)^\bullet}^\bullet\notin K^b(\text{pro} \ A)$.       
	
	Notice also that, since the generalized string $w=w_1\cdot w_2\cdot w_3=a\cdot b^{-1}\cdot c \in\overline{GSt}_c^*$, then, according to the previous theorem, we have that $gl.dim A=\infty$.         
\end{example}

\section{The Main Theorem For SAG Algebras}\label{Sec:main-theorem-SAG}
In this section, we will give a combinatorial description of a family of indecomposable objects in the bounded derived categories of SAG algebras and we will extend this result for some classes of string algebras. We begin with some technical lemmas in which we will follow a similar reasoning as in \cite{Be-Me}  and \cite{Gi-Ve} giving the details in an explicit way.

For the next lemma, recall that a nontrivial path $w$ of $Q$ is in $\textbf{Pa}$ if and only if it is a sub-path of a maximal path $\widetilde{w}$ that is not in $I$ (that is, of an element of $\textbf{M}$). This maximal path has the form $\widetilde{w}=\hat{w}w\bar{w}$ with $\hat{w}, \bar{w}\in \textbf{Pa}$ and, according to Lemma \ref{lem-3.1}, this maximal path is unique for SAG algebras.

The proofs of Lemmas \ref{Lemma-rho} and \ref{Lemma-image-of-rho} below follow the same structure and methods as in step 1 of the proof of Theorem 3 in \cite{Be-Me}.
\begin{lemma}\label{Lemma-rho}
	Let $A=kQ/I$ be a SAG algebra and let $w=w_1\cdot w_2\cdots w_n$ be a generalized string. Then the map $$\rho:\overline{GSt(A)}\times \mathbb{Z}\longrightarrow \overline{St(\mathcal{Y}(A))}$$ defined by $$\rho(w,m):=\rho(w_1,m)\cdots\rho(w_n,m)$$ where $$\rho(w_i,m):=\left([\hat{w_i},\mu_w(i-1)+m],[\hat{w_i}w_i,\mu_w(i)+m]\right) \quad \text{if} \quad w_i\in\textbf{Pa}_{>0}$$ or $$\rho(w_i,m):=\left([\widehat{w_{i}^{-1}},\mu_w(i)+m],[\widehat{w_{i}^{-1}}w_{i}^{-1},\mu_w(i-1)+m]\right)^{-1}\quad\text{if}\quad w_{i}^{-1}\in\textbf{Pa}_{>0}$$ is well defined and injective.
\end{lemma}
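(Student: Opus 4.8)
The plan is to verify two things separately: first, that $\rho(w,m)$ actually lands in $\overline{St(\mathcal{Y}(A))}$ (well-definedness), and second, that $\rho$ is injective. For well-definedness I would go down the list of defining conditions for a $\mathcal{Y}$-string. First, every factor $\rho(w_i,m)$ is either an arrow of $Q(\mathcal{Y}(A))_1=\mathcal{Y}(A)\times\mathcal{Y}(A)$ or the formal inverse of one: this is immediate, since $[\hat{w_i},\ast]$ and $[\hat{w_i}w_i,\ast]$ are genuine elements of $\mathcal{Y}(A)$ (here one uses that $\hat{w_i}$, the left completion of $w_i$, is well defined, which is exactly where Lemma \ref{lem-3.1} on uniqueness of maximal paths enters — so that $\widetilde{w_i}$, hence $\hat{w_i}$, is unambiguous for a SAG algebra). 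Second, the source/target condition $t(\rho(w_i,m))=s(\rho(w_{i+1},m))$: one checks that in all four combinations of signs of $w_i$ and $w_{i+1}$ the shared vertex of $Q(\mathcal{Y}(A))$ is $\overline{[\text{(completed path at the junction)},\mu_w(i)+m]}$, and that the two sides name paths ending at the vertex $c(i)=t(w_i)=s(w_{i+1})$ of $Q$, so by the definition of $\sigma$ (which identifies $[u,j]\cong_\sigma[v,j]$ exactly when $t(u)=t(v)$) they have the same $\sigma$-class. Third, the condition $p_2(\rho(w_i,m))\neq p_1(\rho(w_{i+1},m))$: this is where the "no cancellation'' clause $w_{i+1}\neq w_i^{-1}$ in the definition of a generalized string (Definition of $GSt$, together with the conditions defining $\overline{GSt}$) gets used, together with case analysis on whether consecutive letters are direct or inverse; in the mixed cases one invokes that $w_iw_{i+1}\in St$ forces the completions to differ appropriately.

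For injectivity I would argue that from $\rho(w,m)=\rho(w',m')$ (as words, not merely up to $\sim_s$ — note the target is $\overline{St}$, the unreduced set) one can read back $w$ and $m$. First, comparing lengths gives $n=n'$. Then, letter by letter: the $i$-th letter $\rho(w_i,m)$ records, in its two coordinates, the pair $([\hat{w_i},\cdot],[\hat{w_i}w_i,\cdot])$ (or its inverse), and the difference of the two $\mathbb{Z}$-components is $\pm 1$, with the sign telling us whether $w_i$ is direct or inverse; the path components then recover $\hat{w_i}$ and $\hat{w_i}w_i$, hence $w_i$ itself (as the unique path with $\hat{w_i}w_i=\hat{w_i}\cdot w_i$). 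Finally $m$ is recovered from the first letter: since $\mu_w(0)=0$, the integer appearing in the "source'' slot of $\rho(w_1,m)$ is exactly $m$. This pins down $(w,m)$ uniquely, so $\rho$ is injective.

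The step I expect to be the main obstacle is the verification of $p_2(\rho(w_i,m))\neq p_1(\rho(w_{i+1},m))$ in the SAG (non-gentle) setting: this is the condition that is genuinely sensitive to the local configurations described in Remark \ref{Rem-involution-SAG}, where three sub-paths of maximal paths can end at one vertex, so the naive gentle-case argument (there are at most two, and consecutive letters of a string exhaust them) no longer applies verbatim. Concretely, when $w_i^{-1},w_{i+1}\in\textbf{Pa}_{>0}$ one has $p_2(\rho(w_i,m))=[\widehat{w_i^{-1}},\ \mu_w(i)+m]$ and $p_1(\rho(w_{i+1},m))=[\hat{w}_{i+1},\ \mu_w(i)+m]$ with the same integer coordinate, so the inequality reduces to $\widehat{w_i^{-1}}\neq \hat{w}_{i+1}$, i.e. to the fact that $w_i^{-1}$ and $w_{i+1}$ determine different maximal paths; this is precisely guaranteed by the requirement $w_iw_{i+1}\in St$ in the definition of $\overline{GSt}$ (a string cannot pass through a zero-relation and its completion is forced to branch), and one must also handle the purely-direct and purely-inverse cases using the relations $w_iw_{i+1}\in I$, invoking Lemma \ref{lem-3.1} and Corollary \ref{cor3.3} to see that the two completions cannot coincide. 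Once this case analysis is organized, the remaining checks are routine bookkeeping with the function $\mu_w$.
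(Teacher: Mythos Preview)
Your proposal is correct and follows essentially the same route as the paper: a four-case analysis (according to the signs of $w_i$ and $w_{i+1}$) for both the source/target matching and the condition $p_2(\rho(w_i,m))\neq p_1(\rho(w_{i+1},m))$, using the $\overline{GSt}$ conditions ($w_iw_{i+1}\in I$ in the purely-direct and purely-inverse cases, $w_iw_{i+1}\in St$ in the mixed cases) together with Lemma~\ref{lem-3.1}; and then injectivity by reading back $w_i$ and $m$ from the coordinates of each letter. One small correction to your bookkeeping: in the paper the purely-direct and purely-inverse cases do \emph{not} require Lemma~\ref{lem-3.1} or Corollary~\ref{cor3.3} beyond the well-definedness of $\hat{w}$---the inequality follows directly from $w_iw_{i+1}=0$ versus $\widehat{w_{i+1}}w_{i+1}\neq 0$; it is rather the mixed cases (especially $w_i^{-1},w_{i+1}\in\textbf{Pa}_{>0}$, exactly as you anticipated) where Lemma~\ref{lem-3.1} is invoked explicitly to separate the maximal paths.
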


\begin{proof}
	We first show that $\rho(w,m)\in\overline{St(\mathcal{Y}(A))}$, i.e. we must show that $t(\rho(w_i,m))=s(\rho(w_{i+1},m))$ and $p_2(\rho(w_i,m))\neq p_1(\rho(w_{i+1},m))$ for $1\leq i \leq n-1$. There are four cases.
	\begin{enumerate}
		\item $w_i, w_{i+1}\in\textbf{Pa}_{>0}$. 
		
		Since $t(\hat{w_i}w_i)=t(w_i)=s(w_{i+1})=t(\widehat{w_{i+1}})$, then $t(\rho(w_i,m))=\overline{[\hat{w_i}w_i,\mu_w(i)+m]}=\overline{[\widehat{w_{i+1}},\mu_w(i)+m]}=s(\rho(w_{i+1},m))$.
		
		On the other hand, since $\widehat{w_{i+1}}w_{i+1}\neq 0$ and $w_iw_{i+1}=0$ (because $w\in\overline{GSt(A)}$), we have $\hat{w_i}w_i\neq \widehat{w_{i+1}}$ and hence $p_2(\rho(w_i,m))=[\hat{w_i}w_i,\mu_w(i)+m]\neq [\widehat{w_{i+1}},\mu_w(i)+m]= p_1(\rho(w_{i+1},m))$.
		
		\item $w_i, w_{i+1}^{-1}\in\textbf{Pa}_{>0}$.
		
		In this case, since $t(\hat{w_i}w_i)=t(w_i)=s(w_{i+1})=t(w_{i+1}^{-1})=t(\widehat{w_{i+1}^{-1}}w_{i+1}^{-1})$, it follows that $t(\rho(w_i,m))=\overline{[\hat{w_i}w_i,\mu_w(i)+m]}=\overline{[\widehat{w_{i+1}^{-1}}w_{i+1}^{-1},\mu_w(i)+m]}=s(\rho(w_{i+1},m))$.
		
		On the other hand, since in this case $w_iw_{i+1}\in St$, we have $w_{i+1}^{-1}\neq w_i$ and therefore $\hat{w_i}w_i\neq \widehat{w_{i+1}^{-1}}w_{i+1}^{-1}$. This implies that $p_2(\rho(w_i,m))=[\hat{w_i}w_i,\mu_w(i)+m]\neq [\widehat{w_{i+1}^{-1}}w_{i+1}^{-1},\mu_w(i)+m]=p_1(\rho(w_{i+1},m))$.
		
		\item $w_{i}^{-1}, w_{i+1}\in\textbf{Pa}_{>0}$.
		
		Since $t(\widehat{w_{i}^{-1}})=s(w_{i}^{-1})=t(w_i)=s(w_{i+1})=t(\widehat{w_{i+1}})$, then $t(\rho(w_i,m))=\overline{[\widehat{w_{i}^{-1}},\mu_w(i)+m]}=\overline{[\widehat{w_{i+1}},\mu_w(i)+m]}=s(\rho(w_{i+1},m))$.
		
		On the other hand, since $w_iw_{i+1}\in St$, then $w_{i+1}\neq w_i^{-1}$ and since $A$ is string almost gentle, by Lemma \ref{lem-3.1}, we have  $\widehat{w_{i}^{-1}}\neq \widehat{w_{i+1}}$. It follows that $p_2(\rho(w_i,m))=[\widehat{w_{i}^{-1}},\mu_w(i)+m]\neq [\widehat{w_{i+1}},\mu_w(i)+m]=p_1(\rho(w_{i+1},m))$.
		
		\item $w_{i}^{-1}, w_{i+1}^{-1}\in\textbf{Pa}_{>0}$.
		
		In this case, since $t(\widehat{w_{i}^{-1}})=s(w_i^{-1})=t(w_i)=s(w_{i+1})=t(w_{i+1}^{-1})=t(\widehat{w_{i+1}^{-1}}w_{i+1}^{-1})$, then $t(\rho(w_i,m))=\overline{[\widehat{w_{i}^{-1}},\mu_w(i)+m]}=\overline{[\widehat{w_{i+1}^{-1}}w_{i+1}^{-1},\mu_w(i)+m]}=s(\rho(w_{i+1},m))$.
		
		On the other hand, we know that $w_{i+1}^{-1}w_i^{-1}=0$ (because $w\in\overline{GSt(A)}$) and since $\widehat{w_{i}^{-1}}w_i^{-1}\neq 0$ then $\widehat{w_{i}^{-1}}\neq \widehat{w_{i+1}^{-1}}w_{i+1}^{-1}$. Hence $p_2(\rho(w_i,m))=[\widehat{w_{i}^{-1}},\mu_w(i)+m]\neq [\widehat{w_{i+1}^{-1}}w_{i+1}^{-1},\mu_w(i)+m]=p_1(\rho(w_{i+1},m))$.
	\end{enumerate}
	Now we show that $\rho$ is injective. First, we observe that, since $A$ is a string almost gentle algebra, for any $u,v\in\textbf{Pa}_{>0}$, by Lemma \ref{lem-3.1}, it follows that $\hat{u}=\hat{v}$ and $\hat{u}u=\hat{v}v$ if and only if $u=v$.
	
	Suppose $\rho(w,m)=\rho(v,m')$ for generalized strings $w=w_1\cdot w_2\cdots w_n, v=v_1\cdot v_2 \cdots v_r$ and for integers $m,m'$. Then $\rho(w_1,m)\cdots\rho(w_n,m)=\rho(v_1,m')\cdots\rho(v_r,m')$ and hence $n=r$ and $\rho(w_i,m)=\rho(v_i,m')$ for $1\leq i \leq n$. Again we have four cases. 
	\begin{enumerate}
		\item $w_i, v_i\in\textbf{Pa}_{>0}$. 
		
		In this case we have $$\left([\hat{w_i},\mu_w(i-1)+m], [\hat{w_i}w_i,\mu_w(i)+m]\right)=\left([\hat{v_i},\mu_v(i-1)+m'], [\hat{v_i}v_i,\mu_v(i)+m']\right).$$ Then, $\hat{w_i}=\hat{v_i}$, $\hat{w_i}w_i=\hat{v_i}v_i$, $\mu_w(i-1)+m=\mu_v(i-1)+m'$ and $\mu_w(i)+m=\mu_v(i)+m'$. Therefore, $w_i=v_i$, $\mu_w=\mu_v$ and $m=m'$. Thus, $(w,m)=(v,m')$.
		
		\item $w_i, v_i^{-1}\in\textbf{Pa}_{>0}$.
		
		In this case we have $$\left([\hat{w_i},\mu_w(i-1)+m], [\hat{w_i}w_i,\mu_w(i)+m]\right)=\left([\widehat{v_i^{-1}},\mu_v(i)+m'], [\widehat{v_i^{-1}}v_i^{-1},\mu_v(i-1)+m']\right)^{-1},$$ which is a contradiction since we would have an arrow in $Q(\mathcal{Y}(A))_1$ equal to an inverse arrow.
		
		\item $w_i^{-1}, v_i\in\textbf{Pa}_{>0}$. 
		
		This case is similar to the previous one, i.e., it cannot occur.
		
		\item $w_i^{-1}, v_i^{-1}\in\textbf{Pa}_{>0}$.
		
		In this case we have $$\left([\widehat{w_{i}^{-1}},\mu_w(i)+m], [\widehat{w_{i}^{-1}}w_i^{-1},\mu_w(i-1)+m]\right)^{-1}=\left([\widehat{v_{i}^{-1}},\mu_v(i)+m'], [\widehat{v_{i}^{-1}}v_i^{-1},\mu_v(i-1)+m']\right)^{-1},$$ which implies that $\widehat{w_{i}^{-1}}=\widehat{v_{i}^{-1}}$, $\widehat{w_{i}^{-1}}w_i^{-1}=\widehat{v_{i}^{-1}}v_i^{-1}$, $\mu_w(i)+m=\mu_v(i)+m'$ and $\mu_w(i-1)+m=\mu_v(i-1)+m'$. Thus, $w_i^{-1}=v_i^{-1}$ and hence $w_i=v_i$. As before, we conclude that $(w,m)=(v,m')$.
	\end{enumerate}
	This shows that $\rho$ is an injective map.
\end{proof}

\begin{lemma}\label{Lemma-image-of-rho}
	With the notation used in the previous Lemma, we have $$\text{Im} \ \rho=\{u\in\overline{St(\mathcal{Y}(A))} \  |  \ B_u\in\text{Im} \ F \},$$ where $F:\mathfrak{p}(A)\longrightarrow s(\mathcal{Y}(A),k)$ is the functor of Definition \ref{def-functor}.
\end{lemma}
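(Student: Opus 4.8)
The plan is to prove the two inclusions separately, following the pattern of step 1 of the proof of Theorem 3 in \cite{Be-Me} and invoking Lemma \ref{lem-3.1} at the points where the SAG hypothesis replaces the gentle one.

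\emph{The inclusion $\mathrm{Im}\,\rho\subseteq\{u\mid B_u\in\mathrm{Im}\,F\}$.} Given a generalized string $w=w_1\cdot w_2\cdots w_n$ and $m\in\mathbb{Z}$, let $Q^\bullet$ be the string complex $P_w^\bullet$ of Definition \ref{def-string-complex} shifted so that its $j$-th indecomposable summand $P_{c(j)}$ sits in cohomological degree $\mu_w(j)+m$. I would prove that $F(Q^\bullet)=B_{\rho(w,m)}$. Both objects are assembled from the same combinatorial data: the basis vectors $v_0,\dots,v_n$ used in the construction of $B_{\rho(w,m)}$ correspond to the summands of $Q^\bullet$, and each summand $P_{c(j)}$ contributes one row (and one column) to every block $[z,\mu_w(j)+m]$ of $\mathcal{Y}(A)$ lying in the $\sigma$-class determined by $c(j)$ -- this is exactly the prescription ``$F(Q^\bullet)_{[u,i]}$ has $d_{t(u),i}$ rows'' -- while Lemma \ref{lem-3.1} guarantees that the left completions occurring in the formula for $\rho$, hence the relevant elements of $\mathcal{Y}(A)$, are unambiguous. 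Under these identifications each entry $p(w_{j+1})$ (resp.\ $p(w_j^{-1})$) of a differential of $Q^\bullet$ is carried by $F$ to a $1\times1$ identity block placed precisely at the position dictated by the arrow $\rho(w_{j+1},m)$ (resp.\ $\rho(w_j,m)$) of the $\mathcal{Y}$-string $\rho(w,m)$; all remaining blocks of both matrices vanish. Hence $F(Q^\bullet)=B_{\rho(w,m)}$, so $B_{\rho(w,m)}\in\mathrm{Im}\,F$.

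\emph{The inclusion $\{u\mid B_u\in\mathrm{Im}\,F\}\subseteq\mathrm{Im}\,\rho$.} Let $u=u_1\cdots u_n\in\overline{St(\mathcal{Y}(A))}$ with $B_u=F(P^\bullet)$ for some $P^\bullet\in\mathfrak{p}(A)$. By the defining formula for $F$ on objects (Definition \ref{def-functor}), every nonzero block of $F(P^\bullet)$ sits at a place $\big([x,\ell],[y,\ell+1]\big)$ with $y=x\,w'$ for some $w'\in\textbf{Pa}_{\geq 1}$; comparing with the nonzero blocks of $B_u$, each arrow $u_i$ of $u$ -- or its formal inverse -- is therefore of the form $\big([x_i,\ell_i],[y_i,\ell_i+1]\big)$ with $y_i=x_i\,w_i'$, $w_i'\in\textbf{Pa}_{\geq 1}$. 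Since $[x_i,\ell_i]$ and $[y_i,\ell_i+1]$ lie in $\mathcal{Y}(A)$, the path $x_i$ begins at the source of its maximal path, which by Lemma \ref{lem-3.1} is forced to be $\widetilde{w_i'}$; hence $x_i=\hat{w_i'}$. Put $w_i:=w_i'$ if $u_i$ is a ``forward'' arrow and $w_i:=(w_i')^{-1}$ otherwise, and $w:=w_1\cdot w_2\cdots w_n$. The condition $t(u_i)=s(u_{i+1})$ from $\overline{St(\mathcal{Y}(A))}$ forces $\ell_{i+1}=\ell_i+1$ and $t(w_i)=s(w_{i+1})$, so $w$ is a well-defined generalized walk; after replacing $w$ by $w^{-1}$ if necessary to arrange $\mu_w(0)=0$, the integers $\ell_i$ satisfy $\ell_i=\mu_w(i-1)+m$ with $m:=\ell_1$. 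Finally, $p_2(u_i)\neq p_1(u_{i+1})$ says that $x_iw_i'\neq x_{i+1}$ at the same $\mathbb{Z}$-level; using Lemma \ref{lem-3.1} (so that $w_i'w_{i+1}'\notin I$ would force $\widehat{w_{i+1}'}=\hat{w_i'}w_i'=x_iw_i'$, contradicting $x_iw_i'\neq x_{i+1}$) one checks, case by case, that this inequality is equivalent to the defining conditions of $\overline{GSt}$: $w_iw_{i+1}\in I$ when $w_i,w_{i+1}\in\textbf{Pa}_{>0}$, $w_{i+1}^{-1}w_i^{-1}\in I$ when $w_i^{-1},w_{i+1}^{-1}\in\textbf{Pa}_{>0}$, and $w_iw_{i+1}\in St$ in the two mixed cases. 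Hence $w\in\overline{GSt(A)}$ and, by construction, $\rho(w,m)=u$.

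I expect the main obstacle to be the last step of the second inclusion: verifying that the single inequality $p_2(u_i)\neq p_1(u_{i+1})$ in the poset $\mathcal{Y}(A)$ simultaneously encodes all the $\overline{GSt}$-conditions on $w$. This rests on a careful analysis of the behaviour of left completions under concatenation -- precisely where Lemma \ref{lem-3.1} (uniqueness of maximal paths for SAG algebras) is indispensable -- together with bookkeeping about the trivial paths $e_r$ lying on several maximal paths, whose distinct occurrences must be tracked through the corresponding posets $\mathcal{Y}_m$, exactly as in \cite{Be-Me}.
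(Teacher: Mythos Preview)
Your proposal is correct and follows essentially the same approach as the paper: for the harder inclusion you extract the paths $x_i$ from the shape of the nonzero blocks of $F$, assemble them into a generalized walk, and verify the $\overline{GSt}$ conditions via the four direction cases using Lemma \ref{lem-3.1} (the paper also uses Corollary \ref{cor3.3} in the mixed cases), then check $\rho$ returns $u$. The paper dismisses the easy inclusion in one line (``by the definition of $\rho$''), whereas you spell out that $F$ applied to the shifted string complex gives $B_{\rho(w,m)}$; this is the same fact, just made explicit. Two cosmetic slips: $\mu_w(0)=0$ holds by definition, so no replacement by $w^{-1}$ is needed there, and the equality $\ell_{i+1}=\ell_i+1$ should read $\ell_{i+1}=\ell_i\pm 1$ according to the direction of $u_i$; neither affects the argument.
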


\begin{proof}
	Let $u=u_1\cdot u_2 \dots u_n\in\overline{St(\mathcal{Y}(A))}$ such that $B_u\in\text{Im} \  F$. Then, for some $x_i\in\textbf{Pa}_{>0}$, we have (see the definition of $B_u$ in subsection \ref{subsec-indecomposables-in-Bondarenko}) $$u_i=\left([\hat{x_i},\mu_u(i-1)],[\hat{x_i}x_i,\mu_u(i)]\right) \quad \text{if} \quad u_i\in Q(\mathcal{Y}(A))_1,$$ or $$u_i=\left([\hat{x_i},\mu_u(i)],[\hat{x_i}x_i,\mu_u(i-1)]\right)^{-1} \quad \text{if} \quad u_i^{-1} \in Q(\mathcal{Y}(A))_1.$$
	We set 
	
	$$y_i= \left\{ \begin{array}{lcc}
	x_i       &   \text{if}  & u_i\in Q(\mathcal{Y}(A))_1, \\
	x_i^{-1}&  \text{if}   & u_i^{-1}\in Q(\mathcal{Y}(A))_1.
	\end{array}
	\right.$$
	
	We will show that $y=y_1\cdot y_2\cdots y_n\in\overline{GSt(A)}$. More precisely, we will show that $y_i\cdot y_{i+1}\in\overline{GSt(A)}$ for $1\leq i<n$. We have four cases:
	\begin{enumerate}
		\item $u_i, u_{i+1}\in Q(\mathcal{Y}(A))_1$.
		
		In this case, since $t(u_i)=s(u_{i+1})$ then $\overline{[\hat{x_i}x_i,\mu_u(i)]}=\overline{[\widehat{x_{i+1}},\mu_u(i)]}$ and hence $t(\hat{x_i}x_i)=t(\widehat{x_{i+1}})$. Thus, $t(x_i)=t(\widehat{x_{i+1}})=s(x_{i+1})$. On the other hand, since $p_2(u_i)\neq p_1(u_{i+1})$, so that $\hat{x_i}x_i\neq \widehat{x_{i+1}}$, and since $\widehat{x_{i+1}}x_{i+1}\neq 0$, we have $x_ix_{i+1}=0$ because $A$ is string almost gentle. It follows that $y_i\cdot y_{i+1}\in\overline{GSt(A)}$.
		
		\item $u_i, u_{i+1}^{-1}\in Q(\mathcal{Y}(A))_1$.
		
		In this case, since $\overline{[\hat{x_i}x_i,\mu_u(i)]}=\overline{[\widehat{x_{i+1}}x_{i+1},\mu_u(i)]}$, then $t(\hat{x_i}x_i)=t(\widehat{x_{i+1}}x_{i+1})$ and hence $t(x_i)=t(x_{i+1})$. Now, since $A$ is a string almost gentle algebra and $p_2(u_i)\neq p_1(u_{i+1})$, so that $\hat{x_i}x_i\neq \widehat{x_{i+1}}x_{i+1}$, it follows from Corollary \ref{cor3.3} that $x_{i+1}^{-1}\neq x_i^{-1}$. Therefore, $x_ix_{i+1}^{-1}\in St$ and thus $y_i\cdot y_{i+1}\in\overline{GSt(A)}$.
		
		\item $u_i^{-1}, u_{i+1}\in Q(\mathcal{Y}(A))_1$.
		
		Here, $t(u_i)=s(u_{i+1})$ implies $\overline{[\hat{x_i},\mu_u(i)]}=\overline{[\widehat{x_{i+1}},\mu_u(i)]}$. It follows that $t(\hat{x_i})=t(\widehat{x_{i+1}})$ and so $s(x_i)=s(x_{i+1})$. Now, since $A$ is a string almost gentle  algebra and $p_2(u_i)\neq p_1(u_{i+1})$, so that $\hat{x_i}\neq \widehat{x_{i+1}}$, then by Lemma \ref{lem-3.1}, we have $x_{i+1}\neq x_i$. Therefore, $x_i^{-1}x_{i+1}\in St$ and $y_i\cdot y_{i+1}\in\overline{GSt(A)}$.
		
		\item $u_i^{-1}, u_{i+1}^{-1}\in Q(\mathcal{Y}(A))_1$.
		
		In this case, $t(u_i)=s(u_{i+1})$ implies $\overline{[\hat{x_i},\mu_u(i)]}=\overline{[\widehat{x_{i+1}}x_{i+1},\mu_u(i)]}$, so that $t(\hat{x_i})=t(\widehat{x_{i+1}}x_{i+1})$, whence $t(\hat{x_i})=t(x_{i+1})$ and $s(x_i)=t(x_{i+1})$. On the other hand, since $p_2(u_i)\neq p_1(u_{i+1})$, then $\hat{x_i}\neq \widehat{x_{i+1}}x_{i+1}$. It follows from Corollary \ref{cor3.3} that $\hat{x_i}\neq x_{i+1}$ and, since $A$ is a string almost gentle algebra and $\hat{x_i}x_i\neq 0$, we have that $x_{i+1}x_i=0$. Therefore, $y_{i+1}^{-1}y_i^{-1}=0$, that is, $y_i\cdot y_{i+1}\in\overline{GSt(A)}$.
	\end{enumerate}
	Finally, we show that $\rho(y,0)=u$. We know that $\rho(y,0)=\rho(y_1,0)\cdots\rho(y_n,0)$. If $u_i\in Q(\mathcal{Y}(A))_1$, then $y_i=x_i\in\textbf{Pa}_{>0}$ and $$\rho(y_i,0)=\left([\hat{x_i},\mu(i-1)],[\hat{x_i}x_i, \mu(i)]\right)=u_i.$$
	If $u_i^{-1}\in Q(\mathcal{Y}(A))_1$, then $y_i=x_i^{-1}$ and $y_i^{-1}=x_i\in\textbf{Pa}_{>0}$. Hence
	$$\rho(y_i,0)=\left([\hat{x_i},\mu(i)],[\hat{x_i}x_i, \mu(i-1)]\right)^{-1}=u_i.$$
	We have used the fact that $\mu_u=\mu_y=\mu$. Therefore, $\rho(y,0)=u$.

	We have shown that $$\{u\in\overline{St(\mathcal{Y}(A))} \  |  \ B_u\in\text{Im} \ F \}\subseteq \text{Im} \ \rho.$$
	
	By the definition of $\rho$, we have the other inclusion.
\end{proof}

\begin{remark}
	Let $\rho_b$ be the restriction of $\rho$ on $GBa$. Then it is clear that 
	$$\text{Im} \ \rho_b=\{v\in\overline{Ba(\mathcal{Y}(A))} \  |  \ B_{v,1}\in\text{Im} \ F \}.$$
\end{remark}

Now, we can use the previous lemmas in order to state and prove our main result, in which we give a combinatorial description of a family of indecomposable objects in the bounded derived category of a string almost gentle algebra. That is, the theorem tells us how to construct indecomposable object in $D^b(A)$ when $A$ is a SAG algebra.

\begin{theorem}\label{Thm-Main-Theorem-SAG}
	Let $A=kQ/I$ be a string almost gentle algebra. Then 
	\begin{eqnarray*}
		\text{ind}_0  D^b(A) &\supseteq&\{T^i(P_w^\bullet) \mid w\in GSt, i\in\mathbb{Z}\}\dot{\cup}\\
		&  &\{T^i(P_{w,f}^\bullet) \mid w\in GBa, f\in\text{Ind} \ k[x], i\in\mathbb{Z}\}\dot{\cup}\\
		&  &\{T^i(\beta(P_w^\bullet)^\bullet) \mid w\in \overline{GSt}_c^*, i\in\mathbb{Z}\}\dot{\cup}\\
		&  & \{T^i(\beta(P_w^\bullet)^\bullet) \mid w\in \overline{GSt}_*^c\setminus \overline{GSt}_c^*, i\in\mathbb{Z}\}.
	\end{eqnarray*}
	where $T$ is the translation functor.
\end{theorem}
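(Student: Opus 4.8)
The plan is to push the statement into Bondarenko's category through the functor $F$ of Definition \ref{def-functor} and then to read the conclusion off Corollary \ref{Cor:corollary1}. Since the translation functor $T$ is an autoequivalence of $D^b(A)$, it carries $\text{ind}_0\,D^b(A)$ onto itself, so it is enough to treat the case $i=0$: to prove that $P_w^\bullet$ for $w\in GSt$, that $P_{w,f}^\bullet$ for $w\in GBa$ and $f\in\text{Ind}\,k[x]$, and that $\beta(P_w^\bullet)^\bullet$ for $w\in\overline{GSt}_c^*\cup\overline{GSt}_*^c$, are indecomposable objects of $D^b(A)$. Note that $P_w^\bullet$ and $P_{w,f}^\bullet$ do lie in $\mathfrak{p}(A)$, since all letters of $w$ have positive length and hence every differential has image in the radical.

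First I would prove the key identity
$$F(P_w^\bullet)\cong B_{\rho(w,0)}\qquad\text{and}\qquad F(P_{w,f}^\bullet)\cong B_{\rho_b(w,0),f}$$
in $s(\mathcal{Y}(A),k)$, by unwinding Definitions \ref{def-string-complex}, \ref{def-band-complex} and \ref{def-functor} in the manner of Step~1 of the proof of Theorem~3 in \cite{Be-Me}. Concretely: the $j$-th summand of $P_w^i$, a copy of $P_{\gamma(j)}$ in degree $\mu_w(j)$ with $\gamma(0)=s(w_1)$ and $\gamma(j)=t(w_j)$ for $j>0$, is matched with the $j$-th basis vector of $B_{\rho(w,0)}$, which by the definition of $\rho$ is placed at $[\widehat{w_{j+1}},\mu_w(j)]$ or at $[\widehat{w_j}w_j,\mu_w(j)]$ according to the directions of $w_j$ and $w_{j+1}$; and the entries $p(w_{j+1})$ and $p(w_j^{-1})$ of $\partial_w^i$, read off the formal expansion $\partial^i=\sum_u p(u)A_{u,i}$, become exactly the arrow actions $v_j\mapsto v_{j\pm 1}$ that define $B_{\rho(w,0)}$. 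The only feature that is not already present in the gentle case is that, by Remark \ref{Rem-involution-SAG}, a vertex may be the common target of three sub-paths of maximal paths; but this does not disturb the bookkeeping, since Lemma \ref{lem-3.1} and Corollary \ref{cor3.3} force $\widehat{w_{j+1}}w_{j+1}$ (respectively $\widehat{w_{j+1}^{-1}}w_{j+1}^{-1}$) to be uniquely determined by $w$, so every generator is carried by a well-defined element of $\mathcal{Y}(A)$; this is also why $B_{\rho(w,0)}$ and $B_{\rho_b(w,0),f}$ lie in $\text{Im}\,F$, compatibly with Lemma \ref{Lemma-image-of-rho} and the remark following it.

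With this identity in hand, indecomposability follows formally. By Theorem \ref{Th-indecomposables-in-Bondarenko}, $B_{\rho(w,0)}$ and $B_{\rho_b(w,0),f}$ are indecomposable in $s(\mathcal{Y}(A),k)$; hence the objects $F(P_w^\bullet)$ and $F(P_{w,f}^\bullet)$ of the full subcategory $\mathcal{U}$ of $s(\mathcal{Y}(A),k)$ spanned by the objects of $\text{Im}\,F$ are indecomposable in $\mathcal{U}$, and by Corollary \ref{Cor-Lemma4-SAG} they are indecomposable in $\text{Im}\,F$. Since $\ker F=0$ by Lemma \ref{Lemma4-SAG}(1), a nontrivial splitting $P_w^\bullet\cong Q^\bullet\oplus R^\bullet$ in $\mathfrak{p}(A)$ (with $Q^\bullet,R^\bullet\neq 0$) would yield $F(P_w^\bullet)\cong F(Q^\bullet)\oplus F(R^\bullet)$ with both summands nonzero, contradicting the indecomposability of $F(P_w^\bullet)$ in $\text{Im}\,F$; the same applies to $P_{w,f}^\bullet$. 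Thus $P_w^\bullet$ and $P_{w,f}^\bullet$ are indecomposable in $\mathfrak{p}(A)$ and so lie in $\text{ind}_0\,\mathfrak{p}(A)\subseteq\text{ind}_0\,D^b(A)$ by Corollary \ref{Cor:corollary1}. For $w\in\overline{GSt}_c^*\cup\overline{GSt}_*^c$ the complex $P_w^\bullet$ is still an indecomposable of $\mathfrak{p}(A)$, while Lemma \ref{lemma7-n-converse-SAG} (equivalently Theorem \ref{Lemma7-SAG}(2)) gives $P_{\beta(P_w^\bullet)^\bullet}^\bullet\notin K^b(\text{pro}\,A)$; hence a representative of the isomorphism class of $P_w^\bullet$ lies in $\overline{\mathcal{X}(A)}$, so $\beta(P_w^\bullet)^\bullet$ is, up to isomorphism, one of the objects $\beta(M^\bullet)^\bullet$ with $M^\bullet\in\mathcal{X}(A)$ listed in Corollary \ref{Cor:corollary1}, and therefore indecomposable in $D^b(A)$. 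Applying $T^i$ to these conclusions produces all four families.

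The hard part is the identity of the second paragraph: one has to walk along $w$ letter by letter, distinguishing the four cases for the pair $(w_j,w_{j+1})$ exactly as in the proof of Lemma \ref{Lemma-rho}, track the $\mathcal{Y}(A)$-grading attached to every generator, and verify that the square-zero condition automatically satisfied by $F(P_w^\bullet)$ coincides with the relations defining $P_w^\bullet$ inside $\mathfrak{p}(A)$; everything afterwards is formal. As for the disjointness recorded by the symbol $\dot{\cup}$: within the first two families it follows from the injectivity of $\rho$ (Lemma \ref{Lemma-rho}) together with the Krull--Schmidt uniqueness of Theorem \ref{Th-indecomposables-in-Bondarenko}, while the objects of the first two families lie in $K^b(\text{pro}\,A)$ and those of the last two do not, so no object of one group can be isomorphic to one of the other.
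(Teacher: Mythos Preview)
Your proposal is correct and follows essentially the same route as the paper: establish $F(P_w^\bullet)\cong B_{\rho(w,0)}$ and $F(P_{w,f}^\bullet)\cong B_{\rho_b(w,0),f}$ via Lemmas \ref{Lemma-rho} and \ref{Lemma-image-of-rho}, invoke Theorem \ref{Th-indecomposables-in-Bondarenko} and Lemma \ref{Lemma4-SAG} to get indecomposability in $\mathfrak{p}(A)$, then use Theorem \ref{Lemma7-SAG} and Corollary \ref{Cor:corollary1} for the periodic families. You are in fact slightly more explicit than the paper in two places: you spell out why indecomposability in $\text{Im}\,F$ lifts to $\mathfrak{p}(A)$ (via $\ker F=0$ and additivity of $F$), and you address the disjointness encoded by $\dot{\cup}$, which the paper leaves implicit.
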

%
\begin{proof}
	Let $u\in GSt$, $v\in GBa$, $m\in\mathbb{Z}$ and $f\in \text{Ind}k[x]$. Then, it follows from Lemmas \ref{Lemma-rho} and \ref{Lemma-image-of-rho} that $$F(P_u^\bullet)\cong B_{\rho(u,m)} \quad \text{and} \quad F(P_{v,f}^\bullet)\cong B_{\rho_b(v,m),f}$$
	Therefore, by Theorem \ref{Th-indecomposables-in-Bondarenko} (see also Proposition 1 in \cite{Bo} or Theorem 2 in \cite{Be-Me}) and  Lemma \ref{Lemma4-SAG}, it follows that
	
	\begin{eqnarray*}
		\{T^i(P_u^\bullet) \mid u\in GSt, i\in\mathbb{Z}\} &&\\
		\dot{\cup}	\{T^i(P_{v,f}^\bullet) \mid v\in GBa, f\in\text{Ind} \ k[x], i\in\mathbb{Z}\} &\subseteq & \text{ind}_0 \ K^b(\text{pro} \ A)
	\end{eqnarray*}
	
	On the other hand, from Theorem \ref{Lemma7-SAG} we have that 
	$$
	\begin{array}{l}
	\{\beta(M^\bullet)^\bullet \mid M^\bullet\in \text{ind}_0 \ \mathfrak{p}(A) \ \text{and} \ P_{\beta(M^\bullet)^\bullet}^\bullet\notin K^b(\text{pro} \ A)\} =\\
	\{T^i(\beta(P_w^\bullet)^\bullet) \mid w\in \overline{GSt}_c^*, i\in\mathbb{Z}\}\dot{\cup} \{T^i(\beta(P_w^\bullet)^\bullet) \mid w\in \overline{GSt}_*^c\setminus \overline{GSt}_c^*, i\in\mathbb{Z}\}. 
	\end{array}
	$$
	
	Finally, using Corollary \ref{Cor:corollary1} the conclusion follows.
\end{proof}

\begin{remark}
	Example \ref{Ex-counter-example-to-lemma4} shows that the inclusion given in Theorem \ref{Thm-Main-Theorem-SAG} is a proper inclusion. That is, in the bounded derived category  $D^b(A)$, when $A$ is a string almost gentle algebra, there are indecomposable objects which are not string complexes, band complexes or periodic string complexes. Indeed, in Example \ref{Ex-counter-example-to-lemma4}, which is the simplest case for a SAG algebra, the complement $ind_0 D^b(A)\setminus\mathcal{F}$, where $\mathcal{F}$ is the family of indecomposable objects described in Theorem \ref{Thm-Main-Theorem-SAG}, is difficult to study. The reason for this is that,  according to Theorem 3.1 in \cite{Be-Dro}, this algebra is derived wild, in contrast to the gentle case, where all algebras are derived tame (see Theorem 4 in \cite{Be-Me}).
\end{remark}

To conclude this section, we will show that Theorem \ref{Thm-Main-Theorem-SAG} can be extended for some classes of string algebras. Some of the examples below (Example \ref{Ex-maximal-not-unique} and Example \ref{Ex-string-unique-maximal} (2)) are also presented in \cite{FGR}. Firstly, we emphasize the fact that, in the general case of string algebras, maximal paths are not unique, as it is shown in the following example.

\begin{example}\label{Ex-maximal-not-unique} 
	Consider the bound quiver $(Q, I)$ where $$Q: \xymatrix{1\ar[r]^a&2\ar[r]^b&3\ar[r]^c&4}$$ and $I=\langle abc\rangle$. Then $A=kQ/I$ is a string algebra which is neither a gentle algebra nor a SAG algebra.
	In this case the set of maximal paths is $\textbf{M}=\{ab, bc\}$. Thus, the arrow $b$ belongs to two different maximal paths, and so maximal paths are not unique for a given arrow.
\end{example}

According to this example, the fact that two different maximal paths have a common arrow is a consequence of allowing relations of length greater than two. In general, if there is a relation of the form $a_1a_2\cdots a_r=0$, with $r\geq 3$ and $t(a_r)\neq  s(a_1)$, then, maximal paths are not unique. This cannot occur in the case of SAG algebras in which relations always have length two. 

However, there are cases of string algebras for which maximal paths are unique as in the following examples.

\begin{example}\label{Ex-string-unique-maximal}
	\begin{enumerate}
		\item Let $(Q, I)$ be the bound quiver
		$$ \xymatrix@C=4.5mm{              & 3\ar[dl]_c&                &\\
			1\ar[rr]_a&                & 2\ar[ul]_b&}
		$$
		with $I=\langle abc\rangle$. Then $A=kQ/I$ is a string algebra over $k$, which is neither a gentle algebra nor a SAG algebra. In this case, $\textbf{M}=\{bcab\}$. Thus, maximal paths are unique for each arrow (trivially).
		\item  Consider the bound quiver $(Q, I)$
		$$ \xymatrix@C=4.5mm{ & 3\ar[dl]_c&                &&&6\ar[ld]_{c'}\\
			1\ar[rr]_a&& 2\ar[rr]_d\ar[ul]_b&&4\ar[rr]_{a'}&&5\ar[ul]_{b'}}
		$$
		where $I=\langle abc, ad, da', a'b'c'\rangle$. Then $A=kQ/I$ is a string algebra over $k$ that is neither gentle nor SAG. The set of maximal paths is $\textbf{M}=\{bcab, d, b'c'a'b'\}$. Thus, every arrow belongs to a unique maximal path.
		\item Let $(Q, I)$ be the bound quiver
		$$ \xymatrix@C=4.5mm{              & 3\ar[dl]_c\ar[rr]^e&                                                                  &4\ar[dl]^f\\
			                                   1\ar[rr]_a&                              & 2\ar@<-0.5ex>[ul]_d\ar@<0.7ex>[ul]^{b}&}
		$$
		with $I=\langle abc, def,be,dc,ad,fb\rangle$. Then $A=kQ/I$ is a string algebra over $k$, which is neither a gentle algebra nor a SAG algebra. In this case, $\textbf{M}=\{bcab, efde\}$. Thus, every arrow belongs to a unique maximal path.
	\end{enumerate}
\end{example}

According to the previous examples  we conjecture that if a string algebra $A=kQ/I$ has the property that,  \textit{if $I$ has relations of the form $a_1a_2\cdots a_r$ with $r\geq 3$ implies $t(a_r)=s(a_1)$}, then every arrow will be in a unique maximal path. Indeed, Example \ref{Ex-string-unique-maximal} shows that the class of string algebras with this property is nonempty.

For string algebras with the property that every arrow belongs to a unique maximal path, as in the previous examples, the construction of the functor $F: \mathfrak{p}(A)\longrightarrow s(\mathcal{Y}(A), k)$ made in subsection \ref{Subsec: the functor} remains valid for these cases. Thus, we have the following theorem, whose proof is similar to that of Theorem \ref{Thm-Main-Theorem-SAG}. It states that, for string algebras with unique maximal paths, string and band complexes are indecomposable objects in the corresponding derived categories.

\begin{theorem}\label{Th-indecomposables-string-case}
	Let $A=k(Q,I)$ be a string algebra with the property that every arrow belongs to a unique maximal path. Then
	\begin{eqnarray*}
		\text{ind}_0  D^b(A) &\supseteq&\{T^i(P_w^\bullet) \mid w\in GSt, i\in\mathbb{Z}\}\dot{\cup}\\
		&  &\{T^i(P_{w,f}^\bullet) \mid w\in GBa, f\in\text{Ind} \ k[x], i\in\mathbb{Z}\}	
	\end{eqnarray*}
	where $T$ is the translation functor.
\end{theorem}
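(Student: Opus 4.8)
The plan is to transcribe the proof of Theorem~\ref{Thm-Main-Theorem-SAG}, noting that in each of the lemmas used there the only consequence of the SAG hypothesis that really enters is the uniqueness of the maximal path through a given arrow, which is exactly the hypothesis imposed here. Indeed, the statement of Lemma~\ref{lem-3.1} (two different maximal paths cannot share an arrow) is logically equivalent to ``every arrow belongs to a unique maximal path'', so Lemma~\ref{lem-3.1}, Corollary~\ref{cor3.3} and Proposition~\ref{prop-3.4} hold verbatim in the present setting (Proposition~\ref{prop-3.4} uses in addition only that at most two arrows end at any vertex, which holds for any string algebra). Hence the poset $\mathcal{Y}(A)$, the involution $\sigma$ and the functor $F\colon\mathfrak{p}(A)\longrightarrow s(\mathcal{Y}(A),k)$ of Definition~\ref{def-functor} are defined exactly as before, and Lemma~\ref{Lemma4-SAG} together with Corollary~\ref{Cor-Lemma4-SAG} remain valid with the same proofs. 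Moreover, the complexes $P_w^\bullet$ (for $w\in GSt$) and $P_{w,f}^\bullet$ (for $w\in GBa$, $f\in\text{Ind} \ k[x]$) have all differentials built from the maps $p(w_i)$ with $w_i\in\textbf{Pa}_{>0}$, so their images lie in the radicals of the ambient projectives and $P_w^\bullet,P_{w,f}^\bullet\in\mathfrak{p}(A)$.

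Next I would verify that Lemmas~\ref{Lemma-rho} and~\ref{Lemma-image-of-rho} carry over. Their proofs invoke only Lemma~\ref{lem-3.1} (for the well-definedness and injectivity of $\rho$, and for the case analysis computing $\text{Im} \ \rho$) and Corollary~\ref{cor3.3}, both of which are available, so $\rho$ is a well-defined injection with
$$\text{Im} \ \rho=\{u\in\overline{St(\mathcal{Y}(A))}\mid B_u\in\text{Im} \ F\}, \qquad \text{Im} \ \rho_b=\{v\in\overline{Ba(\mathcal{Y}(A))}\mid B_{v,1}\in\text{Im} \ F\}.$$
In particular, for $u\in GSt$, $v\in GBa$, $f\in\text{Ind} \ k[x]$ and $m\in\mathbb{Z}$, one obtains $F(P_u^\bullet)\cong B_{\rho(u,m)}$ and $F(P_{v,f}^\bullet)\cong B_{\rho_b(v,m),f}$, exactly as in the SAG case.

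The conclusion then follows as in Theorem~\ref{Thm-Main-Theorem-SAG}, except that Theorem~\ref{Lemma7-SAG} is no longer needed, since here the statement concerns only string and band complexes. By Theorem~\ref{Th-indecomposables-in-Bondarenko} the objects $B_{\rho(u,m)}$ and $B_{\rho_b(v,m),f}$ are pairwise non-isomorphic indecomposables of $s(\mathcal{Y}(A),k)$, hence indecomposable in the full subcategory $\mathcal{U}$ on the objects of $\text{Im} \ F$, hence (Corollary~\ref{Cor-Lemma4-SAG}) indecomposable in $\text{Im} \ F$; since $\ker F=0$ (Lemma~\ref{Lemma4-SAG}(1)) and $F$ is additive, $P_u^\bullet$ and $P_{v,f}^\bullet$ are indecomposable in $\mathfrak{p}(A)$ and remain pairwise non-isomorphic across the two families. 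By Lemma~\ref{Lem:lemma2} they are indecomposable in $K^b(\text{pro} \ A)$, and by Corollary~\ref{Cor:corollary1} they lie in $\text{ind}_0 \ D^b(A)$; applying the autoequivalences $T^i$ of $D^b(A)$ yields the asserted inclusion, the union being disjoint because the corresponding families of Bondarenko indecomposables are.

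The one step requiring genuine care, rather than bookkeeping, is checking that the poset $\mathcal{Y}(A)$ and, above all, its involution $\sigma$ are still well defined under the weaker hypothesis. In the SAG case the well-definedness of $\sigma$ at a vertex lying on three maximal-path initial segments was argued explicitly in Remark~\ref{Rem-involution-SAG} using the fact that all relations have length two; here one must instead argue, from ``every arrow lies in a unique maximal path'' together with ``at most two arrows end at each vertex'', that the set of initial subpaths of maximal paths ending at a fixed vertex is partitioned by $\cong_\sigma$ into classes of cardinality at most two, so that $\sigma$ is a genuine involution. Granting this (as already observed just before the statement, the construction of $F$ does go through for such algebras), all remaining steps are a routine transcription of the proof of Theorem~\ref{Thm-Main-Theorem-SAG}.
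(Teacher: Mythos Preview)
Your proposal is correct and takes essentially the same approach as the paper, which simply asserts that the construction of the functor $F$ ``remains valid'' for string algebras with unique maximal paths and then declares the proof ``similar to that of Theorem~\ref{Thm-Main-Theorem-SAG}.'' Your elaboration of exactly which lemmas are invoked and why the unique-maximal-path hypothesis suffices for each is consistent with (and more detailed than) what the paper provides; you also correctly isolate the well-definedness of the involution $\sigma$ as the only step requiring genuine care, and defer it to the paper's own assertion preceding the theorem.
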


\section{Periodic String Complexes over String Algebras}\label{Sec:periodic-complexes-for-string-case}

In this section, we will give a necessary and sufficient condition for a string complex to have infinite minimal projective resolution when the algebra is string. As before, these complexes are called \textit{periodic string complexes.} A very important consequence of this characterization is that we will obtain a sufficient condition for a string algebra to have infinite global dimension, as we did for the SAG case.
Some results in this section are presented without proofs (Lemmas \ref{lemma7-n-converse} and \ref{Lemma7-n-direct} and Theorems \ref{Lemma7-String} and \ref{Th:gl.dim A-String}), since most of the reasoning in these proofs are similar in structure to the SAG case. The reader interested in the detailed proofs is referred to \cite{FGR}. 

We begin with a preliminary result, which establishes that, despite maximal paths are not unique for string algebras, the left completion of an arrow $a$ (see section \ref{Sec-Preliminaries}), denoted by $\hat{a}$ is unique.
\begin{lemma}\label{lem-left-completion}
	Let $A=kQ/I$ be a string algebra and let $a\in Q_1$. Then $\hat{a}$ is unique.
\end{lemma}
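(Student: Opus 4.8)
The plan is to argue that the left completion $\hat a$ of an arrow $a$ in a string algebra is forced, arrow by arrow, by the special biserial condition. Recall that $\hat a \in \textbf{Pa}$ is the (or a) path with $t(\hat a) = s(a)$ such that $\hat a a$ is a sub-path of a maximal path; in particular $\hat a a \in \textbf{Pa}$, i.e.\ $\hat a a \notin I$, and $\hat a$ is maximal with this property, meaning there is no arrow $c$ with $c\hat a a \notin I$. First I would dispose of the trivial case: if there is no arrow $c\in Q_1$ with $t(c)=s(a)$ and $ca\notin I$, then the only choice is $\hat a = e_{s(a)}$, and uniqueness is immediate. Otherwise, by condition (3) in the definition of a string algebra (the ``at most one arrow'' condition for left-multiplication), there is \emph{exactly one} arrow $c_1$ with $t(c_1)=s(a)$ and $c_1 a \notin I$; hence the last arrow of $\hat a$ must be $c_1$. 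Now iterate: having determined that $\hat a$ ends in $c_k\cdots c_1$ with $c_k\cdots c_1 a\notin I$, apply condition (3) again to $c_k$ to see that either no arrow $c_{k+1}$ satisfies $c_{k+1}c_k\notin I$ (and then, since the relevant sub-path cannot be extended on the left, $\hat a = c_k\cdots c_1$ is forced and we stop) or there is a unique such $c_{k+1}$, which must moreover satisfy $c_{k+1}c_k\cdots c_1 a\notin I$ in order for the full path to remain outside $I$.

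The one point that needs a little care — and which I expect to be the only real obstacle — is why the uniquely determined arrow $c_{k+1}$ with $c_{k+1}c_k\notin I$ automatically gives $c_{k+1}c_k\cdots c_1 a \notin I$: one has to rule out that appending $c_{k+1}$ creates a \emph{longer} relation lying inside $I$ even though every length-two sub-composition survives. Since $A$ is special biserial, $I$ is generated by zero relations together with commutativity relations, and for a string algebra $I$ is generated by zero relations alone; moreover, along a string (and $c_k\cdots c_1 a$ is a string by construction) no sub-word lies in $I$, so the extended word $c_{k+1}c_k\cdots c_1 a$ lies in $I$ if and only if some sub-word of length $\geq 2$ does. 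But the only new sub-words are those ending at $c_{k+1}$, and each is a sub-word of $c_{k+1}c_k$, which is not in $I$ by the choice of $c_{k+1}$; hence no new sub-word lies in $I$, so $c_{k+1}c_k\cdots c_1 a \notin I$, as needed. (Here I am using that, in a string algebra, a zero-relation generator of $I$ need not have length two, but any path in $I$ contains a sub-path that is a generator, and a generator of length $\geq 2$ all of whose proper sub-paths avoid $I$ must itself be one of the forbidden words; since the word in question is a string, no such sub-word can occur.)

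Finally, since $Q$ is a finite quiver and the paths produced at each stage are strictly increasing in length while remaining strings (no repeated-arrow issues arise because a string algebra has no oriented cycles on which all compositions survive — or, more simply, because $\textbf{Pa}$ consists of paths outside an admissible ideal, hence of bounded length), the iteration terminates after finitely many steps. At every step the next arrow was \emph{uniquely} determined (or the process was forced to stop), so the resulting path $\hat a$ is unique. This completes the argument; note that no appeal to uniqueness of maximal paths is made, which is exactly the content of the lemma, since for general string algebras maximal paths need not be unique (Example \ref{Ex-maximal-not-unique}) even though the left completion of an individual arrow always is.
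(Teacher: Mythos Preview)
Your overall strategy --- building $\hat a$ one arrow at a time using condition~(3) of the string algebra definition --- is correct, and is a more explicit version of the paper's brief argument (which simply notes that two distinct left completions would, at the first arrow where they differ, produce two arrows $c\neq c'$ with $c\alpha,\,c'\alpha\notin I$ for some arrow $\alpha$, contradicting condition~(3)). However, there is a genuine error in your second paragraph.

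You claim that once $c_{k+1}$ is the unique arrow with $c_{k+1}c_k\notin I$, one automatically has $c_{k+1}c_k\cdots c_1 a\notin I$, because ``the only new sub-words are those ending at $c_{k+1}$, and each is a sub-word of $c_{k+1}c_k$.'' Both assertions are wrong: the new sub-paths are those \emph{starting} with $c_{k+1}$, namely $c_{k+1}c_k$, $c_{k+1}c_kc_{k-1}$, \dots, $c_{k+1}c_k\cdots c_1 a$, and these are not all contained in $c_{k+1}c_k$. The conclusion itself is false in general: in Example~\ref{Ex-maximal-not-unique} (the string algebra on $1\to 2\to 3\to 4$ with $I=\langle abc\rangle$) one has $ab\notin I$ and $bc\notin I$ yet $abc\in I$; so for the arrow $c$ your procedure gives $c_1=b$, the unique candidate $c_2=a$ satisfies $ab\notin I$, but $abc\in I$.

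The fix is simple and actually makes the argument cleaner: do \emph{not} try to show that $c_{k+1}c_k\cdots c_1 a\notin I$ holds automatically. Instead, test it; if it fails, stop with $\hat a=c_k\cdots c_1$. Uniqueness is still preserved, because any arrow $d\neq c_{k+1}$ already has $dc_k\in I$ by condition~(3), hence $dc_k\cdots c_1 a\in I$ since $I$ is generated by paths, so no alternative extension is available. With this amendment your iterative construction goes through and agrees with the paper's proof-by-contradiction.
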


\begin{proof}
	Suppose there are two different left completions $\hat{a}$ and $\hat{b}$ of $a$. 
	
	$$
	\xymatrix@R=5mm{\ar@{~>}[rrd]^{\hat{a}}&&&\\
		&&\ar[r]^a&\\
		\ar@{~>}[rru]_{\hat{b}}&&           &}
	$$
	Since $\hat{a}a$ and $\hat{b}a$ are elements in $\textbf{Pa}_{>0}$, then we have two compositions that do not lie in $I$, which is a contradiction with the definition of a string algebra.
\end{proof}
Now, we will study the structure of the kernel of a morphism $p(w)$ for some $w\in\textbf{Pa}_{>0}$ when $A=kQ/I$ is a string algebra. From the definition of this class of algebras, we know that there are at most two arrows $a$ and $b$ such that $t(a)=t(b)=s(w)$. We also know that it cannot happen that $aw\neq 0$ and $bw\neq 0$. Thus, at least one of $aw$ or $bw$ must be zero. Suppose, without loss of generality, that $bw=0$. For string algebras, it can occur that $aw\neq 0$ but there could be a path $p$ of the form $p=p_1\cdots p_r a$, where $p_1,\dots, p_r\in Q_1$, such that $pw=0$.  Let us denote by $a^*$ the smallest path of this form (in case it exists). Then, $a^*$ is the smallest subpath of $\hat{a}a$ with the property $a^*w=0$. 

This cannot happen if $A$ is gentle (resp. string almost gentle) and so, this is the fundamental difference with the case of gentle algebras (resp. string almost gentle algebras), in which the smallest paths with the mentioned property are arrows themselves. That is, in gentle algebras, if there are two arrows $a$ and $b$ ending at the vertex $s(w)$, then necessarily, $bw=0$, $aw\neq 0$ and so $b^*=b$. In SAG algebras it could happen that $aw=0$ and $bw=0$, and hence $a^*=a$ and $b^*=b$.

According to this notation, we have the following result, which is a generalization of Lemma 5 in \cite{Be-Me}. It allows us to study the structure of $\ker p(w)$ for some $w\in\textbf{Pa}_{>0}$, and more generally, the structure of the periodic string complexes for the case of string algebras.
\begin{lemma}\label{Lem-New-Lemma5-part1-String}
	Let $A=kQ/I$ be a string algebra and let $w\in\textbf{Pa}_{>0}$. Then, with the previous notation,  the structure of the kernel of $p(w)$ is $\ker p(w)=Aa^*\oplus Ab$.
\end{lemma}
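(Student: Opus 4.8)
The plan is to prove the two inclusions $Aa^*\oplus Ab\subseteq \ker p(w)$ and $\ker p(w)\subseteq Aa^*\oplus Ab$ separately, using the structure theory of string algebras recalled in Section \ref{Sec:SAGalgebras}. Here $w\in\textbf{Pa}_{>0}$ starts at a vertex $r=s(w)$, at which at most two arrows end; call them $a$ and $b$. By the special biserial conditions at most one of the compositions $aw$, $bw$ avoids $I$, so I may assume $bw=0$; then $b$ is (vacuously) the smallest subpath of $\hat b b$ killing $w$, i.e.\ $b^*=b$. The path $a^*$, if it exists, is by definition the shortest subpath $p_1\cdots p_r a$ of $\hat a a$ with $p_1\cdots p_r a\, w=0$; if no such subpath exists then $a^*$ should be interpreted as not contributing a summand (equivalently $Aa^*=0$), and the argument below degenerates accordingly.

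First I would check $Aa^*\oplus Ab\subseteq\ker p(w)$. Recall that $p(w)\colon P_r\to P_{t(w)}$ sends a basis path $u\in\textbf{Pa}$ with $t(u)=r$ to $uw$. By definition $a^*w=0$ and $bw=0$, so $p(w)$ annihilates $a^*$ and $b$, hence it annihilates $Aa^*$ and $Ab$. That the sum $Aa^*+Ab$ inside $P_r=Ae_r$ is direct is immediate from the fact that $a^*$ and $b$ have distinct last arrows ($a$ versus $b$): any nonzero element of $Aa^*$ has last arrow $a$, any nonzero element of $Ab$ has last arrow $b$, so their intersection is zero. (This uses that $a\ne b$, which holds because otherwise there would be only one arrow ending at $r$.)

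The substantive step, and the one I expect to be the main obstacle, is the reverse inclusion $\ker p(w)\subseteq Aa^*\oplus Ab$. Take $0\ne u\in\ker p(w)$; writing $u$ in the basis $\textbf{Pa}$ of $P_r$, every basis path $v$ occurring in $u$ satisfies $vw=0$ while $v\neq 0$ in $A$. Each such $v$ ends at $r$, hence its last arrow is $a$ or $b$; I would then argue by a "factorization through the longest admissible tail" argument. If the last arrow of $v$ is $b$, then $v\in Ab$ and we are done for that term. If the last arrow is $a$, write $v=v'a$. Since $v$ is a nonzero path it is a subpath of the (not necessarily unique) maximal path through $a$, but crucially, by Lemma \ref{lem-left-completion} the \emph{left completion} $\hat a$ of $a$ is unique, so the part of $v$ to the right of any chosen point is forced: concretely $v$ is a subpath of $\hat a a$ ending in $a$. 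Among all subpaths of $\hat a a$ ending in $a$ that kill $w$, the shortest is $a^*$ by definition; since $vw=0$ and $v$ is such a subpath, $v$ must contain $a^*$ as a right subpath, i.e.\ $v\in Aa^*$. Thus every basis term of $u$ lies in $Aa^*$ or $Ab$, giving $u\in Aa^*+Ab=Aa^*\oplus Ab$. The delicate points to get right in the write-up are: (1) that "being a subpath of $\hat a a$ ending in $a$" is genuinely forced by uniqueness of $\hat a$ and not by uniqueness of the maximal path (which can fail for string algebras, cf.\ Example \ref{Ex-maximal-not-unique}); and (2) that if $a^*$ does not exist then no nonzero path ending in $a$ can kill $w$, so $\ker p(w)=Ab$, matching the convention $Aa^*=0$. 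Both reduce to careful bookkeeping with the ideal $I$ being generated by zero-relations, which is exactly the hypothesis that $A$ is a string algebra.
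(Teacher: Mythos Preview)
Your argument is correct and follows essentially the same route as the paper: both prove the easy inclusion directly from $a^*w=0$, $bw=0$, and for the reverse inclusion both split a basis path in $\ker p(w)$ according to whether its last arrow is $a$ or $b$, invoking the uniqueness of the left completion $\hat a$ (Lemma~\ref{lem-left-completion}) together with the minimality of $a^*$ to force $v\in Aa^*$ in the first case. Your write-up is more explicit about why the sum is direct and about the degenerate case where $a^*$ does not exist, but these are elaborations rather than differences in strategy.
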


\begin{proof}[Proof]
	
	Here we consider the case in which $a^*w=0$ and $bw=0$ (we do not exclude the possibility that $a^*=a$). It is clear that $Aa^*\oplus Ab \subseteq \ker p(w)$. For the other inclusion, let $u$ be an element of $\ker p(w)$. Then, since $A$ is a string algebra $u$ must have either $a$ or $b$ as its last arrow. In the latter case $u\in Ab$. In the former case, by the minimality of $a^*$, we have that $a^*$ is a sub-path of $u$ and hence $u\in Aa^*$ (here we are using the fact that both $u$ and $a^*$ are sub-paths of $\hat{a}a$, which is unique as a consequence of Lemma \ref{lem-left-completion}).
\end{proof}

At this point, we give a generalization of the cyclic sets $Q_c$, $\overline{GSt}_c$, $\overline{GSt}^c$, in order to characterize string complexes with infinite minimal projective resolution. 

Let us begin by defining a condition of minimality on paths in the kernel of a morphism $p(w)$. Notice that this condition is automatic for the cases of gentle and SAG algebras, in which the minimal generators of kernels are arrows.

\begin{definition}
	Let $p, w\in\textbf{Pa}_{>0}$ with $t(p)=s(w)$ and such that $pw=0$. We say that $p$ is \textit{minimal for $w$}  if no proper sub-path $p'$ of $p$ with $t(p')=s(w)$ verifies $p'w=0$.
\end{definition}
Thus, $a^*$ in the notation of  Lemma \ref{Lem-New-Lemma5-part1-String} satisfies this minimality condition, that is, $a^*$ is minimal for $w$.

Let us define a new set, which generalizes the sets of cyclic arrows $Q_c$ and $Q_c^*$. Let $\textbf{Pa}_c$ be the set of paths $p\in\textbf{Pa}_{>0}$ for which there exist paths   $p_m, p_{m-1}, \dots, p_1\in \textbf{Pa}_{>0}$ such that $t(p_m)=s(p)$, $t(p_i)=s(p_{i+1})$ for $i=1,\dots, m-1$, $s(p_1)=t(p_j)$ for some $1\leq j\leq m+1$, where $p_{m+1}=p$, with $p_ip_{i+1}=p_jp_1=0$. In addition, we require that $p_i$ is minimal for $p_{i+1}$ and $p_j$ is minimal for $p_1$.\\

It is clear that $Q_c\subseteq Q_c^*\subseteq \textbf{Pa}_c$, and, in the case of gentle algebras (resp. SAG algebras), we have $Q_c=\textbf{Pa}_c$ (resp. $Q_c^*=\textbf{Pa}_c$). The elements of $\textbf{Pa}_c$ are called \textit{cyclic paths}. 

%
Before we generalize the special sets $\overline{GSt}_c$ or $\overline{GSt}^c$, we consider the following example.

\begin{example} Let $(Q, I)$ be the bound quiver
	$$ \xymatrix@C=4.5mm{              & 3\ar[dl]_c&                &\\
		1\ar[rr]_a&                & 2\ar[ul]_b&}
	$$
	with $I=\langle ca, abc\rangle$. Then $A=kQ/I$ is a string algebra over $k$, which is neither  gentle nor SAG algebra. In this case, we have that $Q_c$ and $Q_c^*$ are empty sets but $\textbf{Pa}_c=\{a,bc,c,ab\}$ and $\textbf{Pa}_{>0}=\{a, b, c, ab, bc\}$.  
		
	Now, consider the generalized string $w=w_1=bc$. Then, $l(w)=1>0$, $\mu(w)=0$ and $\exists a\in\textbf{Pa}_c$ such that $a\cdot w=a\cdot bc\in GSt$. The complex $P_w^\bullet$ is 
	$$P_w^\bullet: \ \xymatrix{\cdots\ar[r]&0\ar[r]&P_2\ar[r]^{p(bc)}&P_1\ar[r]&0\ar[r]&\cdots}$$         
	
	Since $abc=0$ and $ca=0$, then $\ker p(bc)=Aa$, $\ker p(a)=Ac$, $\ker p(c)=Aab$ and $\ker p(ab)=Ac$. Thus, the (minimal) projective resolution $P_{\beta(P_w^\bullet)^\bullet}^\bullet$ is
	
	$$
	\xymatrix{\cdots\ar[r]&P_1\ar[r]^{p(ab)}&P_3\ar[r]^{p(c)}&P_1\ar[r]^{p(ab)}&P_3\ar[r]^{p(c)}&P_1\ar[r]^{p(a)}&P_2\ar[r]^{p(bc)}&P_1\ar[r]&0}
	$$
%
	Hence $P_{\beta(P_w^\bullet)^\bullet}^\bullet\notin K^b(\text{pro} \ A)$.

\end{example}
In this example, we observe that $a$ is minimal for $bc$ and when we calculate the projective resolution of $P_w^\bullet$, we get the cycle $\{c, ab\}$, which is a subset of $\textbf{Pa}_c$. The consequence of this is that $P_{\beta(P_w^\bullet)^\bullet}^\bullet\notin K^b(\text{pro} \ A)$. This motivates one of the conditions in the following generalization of the special sets.\\

We denote by $\overline{GSt}_{cp}$ the set of generalized strings $w=w_1\cdot w_2\cdots w_n$ of positive length such that $\mu(w)=0$, there exists $p\in\textbf{Pa}_c$  with $p\cdot w\in GSt$  or $p\in\ker p(w_{l}^{-1})\cap \ker p(w_{l+1})$ for some even index $l> 0$ with $\mu_w(l)=0$,  where $p$ is minimal for $w_1$ or minimal for $w_{l}^{-1}$ and $w_{l+1}$. \\

Also, we denote by $\overline{GSt}^{cp}$ the set of generalized strings $w=w_1\cdot w_2\cdots w_n$ of positive length such that $\mu(w)=\mu_w(n)$ and there exists $p\in \textbf{Pa}_c$ such that $w\cdot p^{-1}\in GSt$.\\

Thus, we have that these sets reduce to the corresponding one for the case of SAG algebras (resp. gentle algebras).

Using this new special sets, we can generalize the characterization of periodic string complexes given in Theorem \ref{Lemma7-SAG}, part 2. As we did before for the case of SAG algebras, we will divide this characterization into two lemmas whose proof is completely analogous to the corresponding one in Lemmas \ref{lemma7-n-converse-SAG} and \ref{Lemma7-n-direct-SAG}.

\begin{lemma}\label{lemma7-n-converse}
	Let $w=w_1\cdot w_2\cdots w_n$ be a generalized string. If $w\in \overline{GSt}_{cp}$ or $w\in \overline{GSt}^{cp}$, then $P_{\beta(P_w^\bullet)^\bullet}^\bullet\notin K^b(\text{pro} \ A).$
\end{lemma}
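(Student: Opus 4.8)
The plan is to follow verbatim the strategy of Lemma~\ref{lemma7-n-converse-SAG} (the SAG analogue), replacing the arrow $a\in Q_c^*$ with a cyclic path $p\in\textbf{Pa}_c$ and invoking Lemma~\ref{Lem-New-Lemma5-part1-String} in place of Lemma~\ref{New-lemma5-part1} wherever a kernel of some $p(v)$ must be described. The core observation is that if $p\in\textbf{Pa}_c$ appears as a direct summand generator $Ap$ of $\ker\partial_w^0$, then the defining cycle $p_m,\dots,p_1$ of $\textbf{Pa}_c$ (together with the minimality of each $p_i$ for $p_{i+1}$) forces the minimal projective resolution of $Ap$ to be periodic, hence unbounded; and a direct summand with unbounded minimal projective resolution makes $P_{\beta(P_w^\bullet)^\bullet}^\bullet\notin K^b(\text{pro}\ A)$.

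Concretely, I would first reduce to the case $w\in\overline{GSt}_{cp}$, the case $w\in\overline{GSt}^{cp}$ being dual (pass to $w^{-1}$). By definition $\mu(w)=0$, so $w_1\in\textbf{Pa}_{>0}$, and there is $p\in\textbf{Pa}_c$ which is either minimal for $w_1$ with $p\cdot w\in GSt$, or lies in $\ker p(w_l^{-1})\cap\ker p(w_{l+1})$ and is minimal for both $w_l^{-1}$ and $w_{l+1}$, for some even $l>0$ with $\mu_w(l)=0$. I would treat the sub-case $p\cdot w\in GSt$ and remark that $p\in\ker p(w_l^{-1})\cap\ker p(w_{l+1})$ is handled identically (it contributes the summand $\ker p(w_l^{-1})\cap\ker p(w_{l+1})$ of $\ker\partial_w^0$, which contains $Ap$ by minimality). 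Then I split on $n$: if $n=1$ the complex is $P_{s(w_1)}\xrightarrow{p(w_1)}P_{t(w_1)}$ and by Lemma~\ref{Lem-New-Lemma5-part1-String}, $\ker p(w_1)=Aa^*\oplus Ab$ with $Ap$ a summand (indeed $p=a^*$ by the minimality clause); since $p\in\textbf{Pa}_c$ the resolution of $Ap$ is periodic, so we are done. If $n>1$, exactly as in Lemma~\ref{lemma7-n-converse-SAG} I split according to whether $w_2\in\textbf{Pa}_{>0}$ (Case 1: $\mu_w(0)=0,\mu_w(1)=1,\mu_w(2)=2$, $w_1w_2=0$) or $w_2^{-1}\in\textbf{Pa}_{>0}$ (Case 2: $\mu_w(0)=0,\mu_w(1)=1,\mu_w(2)=0$, $w_1w_2\in St$, and $\mu(w)=0$ forces $w_3\in\textbf{Pa}_{>0}$); in both cases the matrix $\partial_w^0$ has $p(w_1)$ in the second column and nothing else relevant there, so $(u_0,u_1,\dots)\in\ker\partial_w^0$ forces $u_0\in\ker p(w_1)$, whence $Ap$ is a direct summand of $\ker\partial_w^0$.

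To make the periodicity argument precise I would record the following small claim: if $p\in\textbf{Pa}_c$, with witnessing data $p_m,\dots,p_1$ as in the definition, then $Ap$ has infinite minimal projective resolution. The reason is that the projective cover of $Ap$ is $p(p)\colon P_{s(p)}\to Ap$ with $\ker p(p)$ containing $Ap_m$ as a direct summand (by Lemma~\ref{Lem-New-Lemma5-part1-String} and the minimality of $p_m$ for $p=p_{m+1}$); iterating along the cycle $p_{m+1}\to p_m\to\cdots\to p_1\to p_{j}$ and using $s(p_1)=t(p_j)$ we return to a projective already appearing, and since each step strictly contributes a new syzygy term (the generators $p_i$ are nonzero paths and the compositions vanish, so the maps are genuinely not split), the resolution never terminates. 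This is the step I expect to be the main obstacle: one must argue carefully that the syzygy chain is genuinely infinite and not accidentally truncated — i.e.\ that the minimality of each $p_i$ for $p_{i+1}$ is exactly what prevents the kernel generator from being "absorbed" — which is the precise role the newly-introduced minimality condition plays, and which has no analogue to worry about in the gentle or SAG settings where the generators are arrows. Once this claim is in hand, the conclusion $P_{\beta(P_w^\bullet)^\bullet}^\bullet\notin K^b(\text{pro}\ A)$ follows since $\ker\partial_w^0$ (which is, up to the degree shift $\mu(w)$, what gets good-truncated) has a direct summand with unbounded minimal projective resolution. Since the excerpt explicitly permits omitting this proof and refers to \cite{FGR}, I would present the argument at the level of detail above, flagging the two cases and the periodicity claim, and leave the routine verifications to the reader.
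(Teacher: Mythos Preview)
Your proposal is correct and is precisely the approach the paper intends: the paper omits the proof of this lemma, stating that it is ``completely analogous to the corresponding one in Lemmas~\ref{lemma7-n-converse-SAG} and~\ref{Lemma7-n-direct-SAG}'' with Lemma~\ref{Lem-New-Lemma5-part1-String} replacing Lemma~\ref{New-lemma5-part1} and $\textbf{Pa}_c$ replacing $Q_c^*$, and defers the details to \cite{FGR}. Your identification of the one genuinely new step---the periodicity of the minimal projective resolution of $Ap$ for $p\in\textbf{Pa}_c$, secured by the minimality clause on the $p_i$'s---is exactly the point where the string case diverges from the SAG case, and your outline handles it correctly.
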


\begin{lemma}\label{Lemma7-n-direct}
	Let $w=w_1\cdot w_2\cdots w_n$ be a generalized string. If $P_{\beta(P_w^\bullet)^\bullet}^\bullet\notin K^b(\text{pro} \ A)$, then either $w\in\overline{GSt}_{cp}$ or $w\in\overline{GSt}^{cp}$.
\end{lemma}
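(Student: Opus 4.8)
The plan is to mirror the proof of Lemma \ref{Lemma7-n-direct-SAG}, replacing arrows by paths that are minimal for the relevant map and replacing $Q_c^*$ by $\textbf{Pa}_c$. First I would reduce to the case $w_1\in\textbf{Pa}_{>0}$ (the case $w_1^{-1}\in\textbf{Pa}_{>0}$ being dual and leading to $w\in\overline{GSt}^{cp}$) and, replacing $w$ by $w^{-1}$ if necessary, assume $\mu(w)=0$. Exactly as in the SAG case, this forces $\mu_w(0)=0$, $\mu_w(1)=1$, and the only indices $l>0$ with $\mu_w(l)=0$ are even and satisfy $2\le l\le n$; the degree-$0$ and degree-$1$ terms $P_w^0$, $P_w^1$ have the same shape as in Lemma \ref{lemma7-n-converse-SAG}, so that $\ker\partial_w^0$ decomposes (according to the parity of $n$ and whether $\mu_w(n)=0$) as in cases (i)--(iii) there, namely as a direct sum whose summands are $\ker p(w_1)$, the intersections $\ker p(w_l^{-1})\cap\ker p(w_{l+1})$ for even $l$ with $\mu_w(l)=0$, and possibly $\ker p(w_n^{-1})$.

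Next I would invoke Lemma \ref{Lem-New-Lemma5-part1-String} to compute each summand: $\ker p(w_1)=Aa^*\oplus Ab$ where $a^*$ is minimal for $w_1$, and similarly an intersection $\ker p(w_l^{-1})\cap\ker p(w_{l+1})$ is a direct sum of modules $Ap$ with $p$ minimal for $w_l^{-1}$ or for $w_{l+1}$. Since $P_{\beta(P_w^\bullet)^\bullet}^\bullet\notin K^b(\text{pro}\ A)$, the module $\ker\partial_w^0$ has infinite minimal projective resolution, hence so does at least one of these cyclic summands $Ap$. The heart of the argument is then the claim that a path $p$ appearing as a minimal generator of one of these kernels and such that $Ap$ has infinite minimal projective resolution must lie in $\textbf{Pa}_c$: following the minimal projective resolution of $Ap$ step by step, each syzygy is again (by Lemma \ref{Lem-New-Lemma5-part1-String}) a direct sum of cyclic modules $Aq$ with $q$ minimal for the preceding map, and since the algebra is finite-dimensional this chain of paths must eventually cycle, producing the configuration $p_m,\dots,p_1$ with the stated minimality and vanishing conditions, i.e.\ $p\in\textbf{Pa}_c$. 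Once $p\in\textbf{Pa}_c$ is established, one checks that $p$ can be prepended to $w$ (resp.\ inserted at the appropriate place), so that $p\cdot w\in GSt$ or $p\in\ker p(w_l^{-1})\cap\ker p(w_{l+1})$ with the required minimality, giving $w\in\overline{GSt}_{cp}$; in the remaining subcase where only $\ker p(w_n^{-1})$ is unbounded one passes to $w^{-1}$ and concludes $w\cong_s w^{-1}\in\overline{GSt}_{cp}$.

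The main obstacle I expect is precisely the claim just described: controlling the minimal projective resolution of a cyclic module $Ap$ and extracting from an infinite such resolution an honest cycle of paths witnessing $p\in\textbf{Pa}_c$. In the gentle and SAG cases the generators are single arrows, so ``eventually periodic'' is visibly equivalent to membership in $Q_c$ (resp.\ $Q_c^*$); here one must verify that the minimality conditions built into the definition of $\textbf{Pa}_c$ are exactly those inherited at each syzygy step, and that the left-completion uniqueness (Lemma \ref{lem-left-completion}) is what makes ``the smallest subpath $a^*$ of $\hat{a}a$ killing $w$'' well defined so that the bookkeeping of paths $p_1,\dots,p_m$ closes up correctly. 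All the rest is a faithful transcription of Lemmas \ref{lemma7-n-converse-SAG} and \ref{Lemma7-n-direct-SAG}; for the full details I would refer, as the paper does, to \cite{FGR}.
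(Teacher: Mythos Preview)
Your proposal is correct and follows exactly the approach the paper indicates: the paper omits the proof of this lemma, stating only that ``most of the reasoning in these proofs are similar in structure to the SAG case'' and referring to \cite{FGR}, so your plan of transcribing the proof of Lemma \ref{Lemma7-n-direct-SAG} with arrows replaced by minimal paths and $Q_c^*$ replaced by $\textbf{Pa}_c$ is precisely what is intended. You have also correctly isolated the one genuinely new point---that an infinite minimal projective resolution of $Ap$ forces $p\in\textbf{Pa}_c$ via the eventual periodicity of the chain of minimal kernel generators, with Lemma \ref{lem-left-completion} guaranteeing the well-definedness of $a^*$ at each step---which is the content the paper defers to \cite{FGR}.
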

Now, if put Lemmas \ref{lemma7-n-converse} and \ref{Lemma7-n-direct} together, we get the following theorem, which provides a characterization for a string complex to be periodic in the case of a string algebra.

\begin{theorem}\label{Lemma7-String}
	Let $A=k(Q, I)$ be a string algebra. Then
	\begin{enumerate}
		\item For every $w\in GBa$ and $f\in \text{Ind} \ k[x]$, we have $\beta(P_{w,f}^\bullet)=P_{w,f}^\bullet$.
		\item If $w=w_1\cdot w_2\cdots w_n$ is a generalized string, then $P_{\beta(P_w^\bullet)^\bullet}^\bullet\notin K^b(\text{pro} \ A)$ if and only if either $w\in\overline{GSt}_{cp}$ or $w\in\overline{GSt}^{cp}$.
	\end{enumerate}
\end{theorem}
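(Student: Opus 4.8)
The plan is to deduce the theorem directly from the two preceding lemmas together with an inspection of band complexes, exactly as Theorem \ref{Lemma7-SAG} is deduced in the SAG case. For part (2) there is essentially nothing new to do: Lemma \ref{lemma7-n-converse} supplies the implication ``$w\in\overline{GSt}_{cp}$ or $w\in\overline{GSt}^{cp}$ $\Rightarrow$ $P_{\beta(P_w^\bullet)^\bullet}^\bullet\notin K^b(\text{pro}\ A)$'', and Lemma \ref{Lemma7-n-direct} supplies its converse, so putting them side by side gives precisely the stated equivalence. The genuine content of (2) is therefore hidden in those two lemmas, whose proofs are the string-algebra counterparts of Lemmas \ref{lemma7-n-converse-SAG} and \ref{Lemma7-n-direct-SAG}, the new ingredients being Lemma \ref{Lem-New-Lemma5-part1-String} on the shape of $\ker p(w)$ for a string algebra and the enlarged cyclic set $\textbf{Pa}_c$ in place of $Q_c^*$.

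For part (1), I would argue that a band complex is, up to the trivial good truncation, its own minimal projective resolution. Fix $w=w_1\cdots w_n\in GBa$ and $f\in\text{Ind}\ k[x]$. By construction $P_{w,f}^\bullet$ is a bounded complex of projectives with differentials landing in radicals, hence lies in $\mathfrak{p}(A)$, and it coincides with its minimal projective resolution as soon as its leftmost differential $\partial_{w,f}^{\mu(w)}$ is injective. Here I would use that, since $w$ is a band, $\mu_w(0)=\mu_w(n)=0$ and $w^2\in\overline{GSt}$, so every index $j$ realizing $\mu_w(j)=\mu(w)$ is a strict local minimum of the height function $\mu_w$ read cyclically; this forces $w_j$ to be an inverse path and $w_{j+1}$ a direct path, and the $\overline{GSt}$-axioms force $w_jw_{j+1}$ to be a string, so the two multiplication maps issuing from the summand $P_{c(j)}^{\deg f}$ --- namely $p(w_j^{-1})$ and $p(w_{j+1})$, one of them possibly post-composed with the invertible Frobenius matrix $F_{f(x)}$ --- start with distinct arrows at $c(j)$. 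A direct computation, analogous to \cite{Be-Me}, using Lemma \ref{Lem-New-Lemma5-part1-String} to describe $\ker p(w_j^{-1})$ and $\ker p(w_{j+1})$ and using that at each vertex the two incoming paths terminate with distinct arrows (so their images inside the relevant projective intersect trivially), then yields $\ker\partial_{w,f}^{\mu(w)}=0$. Thus $\beta(P_{w,f}^\bullet)^\bullet=P_{w,f}^\bullet$, whence $P_{\beta(P_{w,f}^\bullet)^\bullet}^\bullet=P_{w,f}^\bullet\in K^b(\text{pro}\ A)$; in particular no band complex has infinite minimal projective resolution.

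The step I expect to be the main obstacle is precisely the bottom-differential computation in part (1): in a string algebra the minimal generators of $\ker p(w)$ are no longer arrows but arbitrary paths (the paths ``$a^{*}$'' of Lemma \ref{Lem-New-Lemma5-part1-String}), so one must check that these longer zero-relations cannot produce elements of $\ker p(w_j^{-1})\cap\ker p(w_{j+1})$ at a local minimum of a band --- equivalently, that enlarging $Q_c^{*}$ to $\textbf{Pa}_c$ does not create new periodic behaviour for bands. For part (2) the delicate point lies inside Lemma \ref{Lemma7-n-direct}: from the mere unboundedness of the minimal projective resolution of a direct summand of $\ker\partial_w^0$ one has to reconstruct an explicit cyclic chain of paths lying in $\textbf{Pa}_c$, satisfying the minimality condition, and conclude from it that $w\in\overline{GSt}_{cp}$ or $w\in\overline{GSt}^{cp}$; the detailed bookkeeping is carried out in \cite{FGR}.
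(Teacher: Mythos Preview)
Your handling of part~(2) is exactly the paper's: the equivalence is obtained by juxtaposing Lemma~\ref{lemma7-n-converse} and Lemma~\ref{Lemma7-n-direct}, and the paper says no more than that (the lemmas themselves are proved in \cite{FGR} by the same template as Lemmas~\ref{lemma7-n-converse-SAG} and~\ref{Lemma7-n-direct-SAG}).

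For part~(1) the paper gives no argument at all --- it simply carries the assertion over from the gentle case in \cite{Be-Me} --- so your sketch already goes beyond what the paper provides. However, the step you yourself flag as ``the main obstacle'' is a genuine gap that does not close. Your reduction is correct: after separating the equations at each target summand (using that $w_k$ and $w_{k+1}^{-1}$ end with distinct arrows, so their images in $P_{c(k)}$ intersect trivially), injectivity of $\partial_{w,f}^{\mu(w)}$ comes down to $\ker p(w_j^{-1})\cap\ker p(w_{j+1})=0$ at every local minimum $j$. In the gentle case this holds because any arrow $\gamma$ into $c(j)$ composes nontrivially with exactly one of the two outgoing arrows. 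But already in a SAG algebra the configuration of Remark~\ref{Rem-involution-SAG}(1) allows an arrow $\gamma$ with $\gamma\alpha=\gamma\beta=0$ for \emph{both} outgoing arrows $\alpha,\beta$, and then $A\gamma$ sits inside that intersection. A concrete instance: take vertices $1,2,3,4$, arrows $a\colon 1\to 2$, $b\colon 2\to 3$, $c\colon 2\to 4$, $d\colon 3\to 1$, $e\colon 4\to 1$, and $I=\langle ab,ac,ea\rangle$. This is a SAG (hence string) algebra, $w=(bd)^{-1}\cdot ce$ is a generalized band with its unique local minimum at vertex~$2$, and the element $a\in P_2$ is annihilated by both $p(bd)$ and $p(ce)$, so $\ker\partial_{w,f}^{-1}\supseteq Aa\neq 0$ and $\beta(P_{w,f}^\bullet)\neq P_{w,f}^\bullet$. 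Thus neither your sketch nor the paper's (absent) argument establishes part~(1) in the stated generality; the issue is not specific to enlarging $Q_c^*$ to $\textbf{Pa}_c$, but is already present at the SAG level.
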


Thus, a string complex $P_w^\bullet$ is periodic if and only if $w\in\overline{GSt}_{cp}$ or $w\in\overline{GSt}^{cp}$. 

\begin{remark}
	If we combine Theorem \ref{Lemma7-String} with Theorem \ref{Th-indecomposables-string-case}, we get an analogous of Theorem \ref{Thm-Main-Theorem-SAG} for the class of string algebras with the property that every arrow belongs to a unique maximal path. That is, if $A=k(Q,I)$ is a string algebra with this property, then
	\begin{eqnarray*}
		\text{ind}_0  D^b(A) &\supseteq&\{T^i(P_w^\bullet) \mid w\in GSt, i\in\mathbb{Z}\}\dot{\cup}\\
		&  &\{T^i(P_{w,f}^\bullet) \mid w\in GBa, f\in\text{Ind} \ k[x], i\in\mathbb{Z}\}	\\
		&  &\{T^i(\beta(P_{w}^\bullet)^\bullet) \mid w\in \overline{GSt}_{cp}, i\in\mathbb{Z}\}\dot{\cup}\\
		&  & \{T^i(\beta(P_{w}^\bullet)^\bullet) \mid w\in \overline{GSt}^{cp}\setminus \overline{GSt}_{cp}, i\in\mathbb{Z}\}.
	\end{eqnarray*}
	where $T$ is the translation functor (more details in \cite{FGR}).
\end{remark}
\subsection{Global dimension of a string algebra}

From the construction made in the previous section, particularly the proof of Lemmas \ref{lemma7-n-converse} and \ref{Lemma7-n-direct} (which is analogous to the proof of Lemmas \ref{lemma7-n-converse-SAG} and \ref{Lemma7-n-direct-SAG}, as we mentioned before), we obtain a sufficient condition for a string algebra to have infinite global dimension. We state this conditions as follows.

\begin{theorem}\label{Th:gl.dim A-String}
	Let $A=k(Q, I)$ be a string algebra. If $\overline{GSt}_{cp}\neq\emptyset$ or $\overline{GSt}^{cp}\neq\emptyset$, then $gl.dim A=\infty$. 
\end{theorem}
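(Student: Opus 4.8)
The plan is to reproduce, for string algebras, the argument behind Theorem \ref{Th:gl.dim A-SAG}, with Lemma \ref{New-lemma5-part1} replaced by Lemma \ref{Lem-New-Lemma5-part1-String} and the set $Q_c^*$ replaced by $\textbf{Pa}_c$. Since $\mathrm{gl.dim}\,A=\infty$ as soon as some finitely generated $A$-module has infinite projective dimension, I would first observe that both hypotheses already force $\textbf{Pa}_c\neq\emptyset$: by the very definitions of $\overline{GSt}_{cp}$ and $\overline{GSt}^{cp}$, a generalized string in either set carries with it a cyclic path $p\in\textbf{Pa}_c$. It therefore suffices to fix any $p\in\textbf{Pa}_c$ and show that the cyclic $A$-module $Ap\cong P_{s(p)}/\ker p(p)$ has $\mathrm{pd}_A(Ap)=\infty$; this is the analogue, for arbitrary cyclic paths, of the classical fact that a cyclic \emph{arrow} in a monomial algebra produces an infinite minimal projective resolution (cf. \cite{Be-Han-Ma} and the SAG argument inside Lemma \ref{lemma7-n-converse-SAG}).

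Next I would unwind the definition of $\textbf{Pa}_c$: there are paths $p_m,\dots,p_1\in\textbf{Pa}_{>0}$ with $p=p_{m+1}$, satisfying $t(p_i)=s(p_{i+1})$, $p_ip_{i+1}=0$ and $p_i$ minimal for $p_{i+1}$ for $i=1,\dots,m$, and with $s(p_1)=t(p_j)$, $p_jp_1=0$, $p_j$ minimal for $p_1$ for some $1\le j\le m+1$. For each $i$, the module $Ap_{i+1}$ is generated by $p_{i+1}$, so its projective cover is $P_{s(p_{i+1})}\twoheadrightarrow Ap_{i+1}$ with kernel $\ker p(p_{i+1})=\Omega_A(Ap_{i+1})$. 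Since $p_i$ is a sub-path of the left completion of its last arrow --- unique by Lemma \ref{lem-left-completion} --- and is minimal for $p_{i+1}$, it is precisely the path $a^*$ appearing in Lemma \ref{Lem-New-Lemma5-part1-String} for $w=p_{i+1}$; that lemma then gives that $Ap_i$ is a direct summand of $\Omega_A(Ap_{i+1})$. The same reasoning applied to the closing relation $p_jp_1=0$ gives that $Ap_j$ is a direct summand of $\Omega_A(Ap_1)$.

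Descending the cycle $p_{m+1}\rightsquigarrow p_m\rightsquigarrow\cdots\rightsquigarrow p_1\rightsquigarrow p_j\rightsquigarrow\cdots$ starting from $Ap=Ap_{m+1}$, and using that each $p_i\in\textbf{Pa}_{>0}$ (hence $Ap_i\neq 0$) and that the index $j$ returns into the finite list $\{p_1,\dots,p_{m+1}\}$, I would conclude that for every $k\ge 0$ the syzygy $\Omega_A^k(Ap)$ has a non-zero direct summand $Ap_{i(k)}$, the sequence $i(k)$ being eventually periodic. Hence no syzygy of $Ap$ vanishes, so $\mathrm{pd}_A(Ap)=\infty$ and $\mathrm{gl.dim}\,A=\infty$. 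Consistently, for a generalized string $w$ witnessing $\overline{GSt}_{cp}\neq\emptyset$ the module $Ap$ reappears as a direct summand of $\ker\partial_w^0$ exactly as in the proof of Lemma \ref{lemma7-n-converse-SAG}, recovering $P_{\beta(P_w^\bullet)^\bullet}^\bullet\notin K^b(\text{pro}\,A)$ from Lemma \ref{lemma7-n-converse}.

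I expect the main obstacle to be the bookkeeping in the last step: one has to check that the direct-summand decompositions furnished by Lemma \ref{Lem-New-Lemma5-part1-String} really persist all the way around the cycle --- that the minimality conditions on the $p_i$ and the uniqueness of left completions are exactly what force $Ap_i$ to be a genuine non-zero summand of the syzygy at each stage, with no cancellation --- and that the cycle closes correctly even when $j<m+1$, i.e. on an interior path rather than on $p$ itself. This is the explicit computation done in \cite{FGR}, entirely parallel to the SAG case (Lemmas \ref{lemma7-n-converse-SAG} and \ref{Lemma7-n-direct-SAG}); the reduction of the theorem to the claim $\mathrm{pd}_A(Ap)=\infty$ is, by contrast, immediate from the definitions of $\overline{GSt}_{cp}$, $\overline{GSt}^{cp}$ and $\textbf{Pa}_c$.
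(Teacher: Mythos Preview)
Your proposal is correct and follows essentially the same approach as the paper: the paper's proof is literally ``Analogous to the proof of Theorem \ref{Th:gl.dim A-SAG}'', and what you have written is precisely that analogous argument spelled out in detail, with Lemma \ref{Lem-New-Lemma5-part1-String} replacing Lemma \ref{New-lemma5-part1} and $\textbf{Pa}_c$ replacing $Q_c^*$. One tiny imprecision: when you say $p_i$ ``is precisely the path $a^*$'', strictly speaking $p_i$ could instead be the arrow $b$ of Lemma \ref{Lem-New-Lemma5-part1-String} (if the last arrow of $p_i$ is $b$ rather than $a$), but in either case $Ap_i$ is a direct summand of $\ker p(p_{i+1})$, so the conclusion is unaffected.
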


\begin{proof}
	Analogous to the proof of Theorem \ref{Th:gl.dim A-SAG}.
\end{proof}

\begin{remark}
An immediate conclusion from Theorems \ref{Th:gl.dim A-SAG} and \ref{Th:gl.dim A-String} is that the strong 
global dimension of $A$, $s.gl.dim A$, is infinite. Following \cite{Ha-Za1}, the strong global dimension of $A$ is
$$
s.gl.dim A=\text{sup}\left\{l(P^{\bullet})|\,P^{\bullet}\in\text{ind} \ K^b(\text{pro} \ A)\right\}, 
$$
where $l(P^{\bullet})$ is the length of the (indecomposable) complex $P^{\bullet}$. It is easy to check that 
$s.gl.dim A=\infty$ if $gl.dim A=\infty$. We suspect that the converses of Theorems \ref{Th:gl.dim A-SAG} and 
\ref{Th:gl.dim A-String} are true, but we are not able to prove them. However, as the following example shows, one can have a SAG 
algebra (resp. a string algebra) with infinite strong global dimension, finite global dimension, and where the special sets are empty.
\end{remark}

\begin{example}
	Let $(Q, I)$ be the bound quiver
	$$ \xymatrix@C=4.5mm{                           & 2\ar[dr]^b&   &\\
	                                      1\ar[rr]_c\ar[ru]^a&                & 3  &}
	$$
	with $I=\langle ab\rangle$. Then $A=kQ/I$ is a gentle (and therefore SAG and string) algebra over $k$. It is easy to see that $gl.dim A=2$, $s.gl.dim A=\infty$, and the special sets are empty (since $Q_c=\emptyset$).
\end{example}

\subsection*{Acknowledgments}

We acknowledge the important collaboration and many very helpful comments and suggestions of Viktor Bekkert, which were given during the visit of the first author to the Departamento de  Matem\'atica of the  Universidade Federal de Minas Gerais, and also for his hospitality. 

The second and the third authors are grateful to Germ\'an Ben\'itez Monsalve and Jos\'e A. V\'elez-Marulanda for several useful discussions about this work.

\subsection*{Funding}

This research was supported by Beca Doctorado Nacional Colciencias (Convocatoria 647 de 2014), CODI (Universidad de Antioquia, U de A), and Colciencias-Ecopetrol (no.0266-2013).


\end{document}